\documentclass[12pt]{amsart}
\usepackage{amsmath,amsfonts,amssymb,amsthm,epsfig,amscd,epsfig,psfrag,comment,hyperref}
\usepackage{tikz}
\usepackage[all]{xy}
\usepackage{calligra,mathrsfs}

\addtolength{\hoffset}{-1.5cm}
\addtolength{\textwidth}{3cm}

\numberwithin{equation}{section}
\newtheorem{Theorem}[equation]{Theorem}
\newtheorem{Proposition}[equation]{Proposition}
\newtheorem{Lemma}[equation]{Lemma}

\newtheorem{Corollary}[equation]{Corollary}

\theoremstyle{definition}
\newtheorem{Remark}[equation]{Remark}

\newtheorem{Definition}[equation]{Definition}

\numberwithin{figure}{section}

\newcommand{\Coh}{\mathrm{Coh}}
\newcommand{\QCoh}{\mathrm{QCoh}}

\def\Nilp{{\rm{Nilp}}}
\def\gr{{\rm{gr}}}
\def\GL{\rm{GL}}
\def\Hilb{{\rm Hilb}}
\def\Kar{{\rm Kar}}
\def\Tr{{\rm Tr}}
\def\bkappa{\overline{\kappa}}

\def\MF{{\rm{MF}}}
\def\Rep{{\rm{Rep}}}
\def\bO{{\bf{O}}}

\def\g{\mathfrak{g}}

\def\P{\mathbb{P}}
\def\ti{\widetilde{i}}
\def\tN{\widetilde{N}}
\def\cl{{\rm{cl}}}
\def\veps{\varepsilon}

\def\f{\mathbf{f}}
\def\ur{\underline{r}}
\def\uell{\underline{\ell}}
\def\pt{pt}
\def\tj{\widetilde{j}}
\def\htau{\widehat{\tau}}

\def\mfp{{\mathfrak{p}}}
\def\tpi{\widetilde{\pi}}
\def\r{{\bf{r}}}

\def\L{\Lambda}

\def\tm{\widetilde{m}}

\def\C{\mathbb{C}}

\def\N{\mathbb{N}}
\def\bN{\mathbb{N}}
\def\Z{\mathbb{Z}}

\def\Spec{{\mbox{Spec}}}

\def\id{{{id}}}

\def\tcF{{\widetilde{\mathcal{F}}}}

\def\O{{\mathcal O}}

\def\dim{\mbox{dim}}

\def\la{\langle}
\def\ra{\rangle}

\def\H{{\mathrm{H}}}
\def\bH{{\mathbb{H}}}

\def\cC{{\mathcal{C}}}

\def\cF{{\mathcal{F}}}
\def\cG{{\mathcal{G}}}
\def\cH{{\mathcal{H}}}

\def\cK{{\mathcal{K}}}

\def\cM{{\mathcal{M}}}
\def\cN{{\mathcal{N}}}
\def\cP{{\mathcal{P}}}

\def\cU{{\mathcal{U}}}
\def\cV{{\mathcal{V}}}

\def\cX{{\mathcal{X}}}
\def\cY{{\mathcal{Y}}}

\def\oQ{{\overline{Q}}}
\def\tQ{{\widetilde{Q}}}
\def\oE{{\overline{E}}}
\def\tE{{\widetilde{E}}}
\def\bA{{\mathbb{A}}}
\def\bM{{\mathbb{M}}}

\def\sl{{\mathfrak{sl}}}
\def\gl{{\mathfrak{gl}}}
\def\hG{{\widehat{G}}}
\def\hP{{\widehat{P}}}

\DeclareMathOperator{\Hom}{Hom}

\DeclareMathOperator{\End}{End}
\DeclareMathOperator{\Sym}{Sym}

\begin{document}

\title{Abelian Hall categories}
\author[Sabin Cautis]{Sabin Cautis}
\address[Sabin Cautis]{University of British Columbia \\ Vancouver BC, Canada}
\email{cautis@math.ubc.ca}

\begin{abstract}
To a quiver we associate a finite length monoidal abelian category which categorifies the corresponding preprojective K-theoretic Hall algebra of Varagnolo-Vasserot. The simples in this category provide a (dual) canonical basis of the Hall algebra. In particular, if the quiver is affine, this provides a basis for the positive half of the corresponding quantum toroidal algebra. We also show that this abelian category is naturally endowed with renormalized $\r$-matrices. 
\end{abstract}

\maketitle
\setcounter{tocdepth}{1}
\tableofcontents

\section{Introduction}

Fix a quiver $Q = (I,E)$. The main object of study in this paper is the moduli space $\cX_a$ of pairs $(\phi,x)$ where $x = (x_e)$ is a representation of $Q$ of dimension $a$ and $\phi = (\phi_i)$ is a nilpotent endomorphism of this representation.\footnote{$\cX_a$ is in fact the further quotient of this moduli space by $T = \C^\times \times \C^\times$ (see Section \ref{sec:modulispaces}) where one $\C^\times$ gives rise to a quantum grading and the other is used to define the t-structure in Theorem \ref{thm:main1}.} If we denote by $\Coh(\cX_a)$ the (derived) category of coherent sheaves on $\cX_a$ then the Hall product endows $\bigoplus_a \Coh(\cX_a)$ with a monoidal structure. The following is the main result of this paper.

\begin{Theorem}\label{thm:main1}
There exists a natural t-structure on each $\Coh(\cX_a)$ with respect to which the Hall product is t-exact. The hearts $\Coh(\cX_a)^\heartsuit$ of these t-structures yield a finite length, abelian, monoidal category which we denote $\H_Q$.   
\end{Theorem}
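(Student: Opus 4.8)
The strategy is to establish three things in turn: that each $\Coh(\cX_a)$ carries a natural t-structure; that the Hall product is t-exact for it; and that the resulting heart is a finite length monoidal category. The geometric fact that makes everything work is a contraction property of the $\C^\times$-factor of $T$ reserved for the t-structure: roughly, it scales the arrow-part $x=(x_e)$ of a point $(\phi,x)$ positively, so that $(\phi,tx)$ has limit $(\phi,0)$ as $t\to 0$, and hence $\cX_a$ contracts under this $\C^\times$ onto its fixed locus
\[
	Z_a \;:=\; \cX_a^{\C^\times} \;\cong\; \bigl[\,\cN_a\,/\,\prod_i\mathrm{GL}_{a_i}\times\C^\times\,\bigr],
\]
where $\cN_a=\prod_i\{\text{nilpotent endomorphisms of }\C^{a_i}\}$ --- a stack with only finitely many orbits, indexed by tuples of partitions $\lambda^{(i)}\vdash a_i$. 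The t-structure on $\Coh(\cX_a)$ is then the contraction t-structure for this action: one glues the perverse-coherent orbit-t-structure on $Z_a$ along the contraction $\pi\colon\cX_a\to Z_a$ --- declaring $\cF$ connective when $R\pi_*\cF$ is connective on $Z_a$ in each $\C^\times$-weight, and coconnective when the costalk of $\cF$ along $Z_a$ is coconnective. Boundedness and the t-structure axioms then follow by the usual recollement along the finite $\C^\times$-weight stratification, exactly as for perverse-coherent sheaves (Bezrukavnikov, Arinkin--Bezrukavnikov).

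For t-exactness of the Hall product I would factor it through the correspondence
\[
	\cX_a\times\cX_b \;\xleftarrow{\ p\ }\; \cX_{a,b} \;\xrightarrow{\ q\ }\; \cX_{a+b},
\]
where $\cX_{a,b}$ is the moduli of $(\phi,x)$ together with a $\phi$-stable subrepresentation of dimension $a$, $q$ forgets the subobject, and $p$ records the sub- and the quotient objects, so that $\cF\star\cG\cong q_*p^*(\cF\boxtimes\cG)$ up to a cohomological shift and a twist by an equivariant line bundle. The map $p$ is smooth --- indeed a vector-bundle stack, its fibre over $(E',E'')$ the space of compatible extensions, modelled on $\mathrm{RHom}_{Q,\phi}(E'',E')[1]$ --- so $p^*$, appropriately shifted, is t-exact by compatibility of the (perverse-coherent) t-structure with smooth pullback; this, together with the fact that $p$, $q$ and the contractions are all equivariant for the contracting $\C^\times$, reduces the whole statement to the t-exactness of $q_*$. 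Since $q$ is proper, $q_*$ is right t-exact once the perversity is normalized correctly, and the essential remaining point is its left t-exactness, which is the coherent incarnation of ``$q$ is semismall'': for each pair of orbits $O\subseteq\overline{O'}$ meeting the relevant strata one needs a bound of the form $\dim\bigl(q^{-1}(\text{pt of }O)\cap(\text{stratum over }O')\bigr)\le\tfrac12\,\mathrm{codim}_{\overline{O'}}O$, with the shift and twist in $\star$ precisely absorbing the defect.

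I expect this dimension estimate to be the main obstacle. It concerns spaces of $\phi$-stable subrepresentations, and I would reduce it to a Lusztig-type Lagrangian estimate for the nilpotent variety attached to $Q$ --- the analogue, for $Q$-representations equipped with a nilpotent endomorphism, of the fact that Lusztig's $\Lambda_a$ has the same dimension as $E_{Q,a}$; concretely, one must show that the relevant Steinberg-type fibres $\cX_{a,b}\times_{\cX_{a+b}}\cX_{a,b}$ have at most the expected dimension, and it is here that the nilpotency of $\phi$ and the preprojective relations enter essentially. Getting the numerology exactly right --- so that $q_*$ is t-exact on the nose, not merely of bounded amplitude --- is at the same time what pins down the normalization of the t-structure (and of the twist in $\star$) and the crux of the whole proof.

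Granting this, the rest is comparatively soft. t-exactness of $p^*$ and $q_*$ gives an exact bifunctor $\star$ on the hearts; associativity and the unit --- the structure sheaf of $\cX_0=[\pt/T]$, which lies in the heart --- descend from the derived level, so $\H_Q:=\bigoplus_a\Coh(\cX_a)^\heartsuit$ is a monoidal abelian category. Finally, Noetherianity of the heart is inherited from coherence on the finite-type stack $\cX_a$ together with standard facts about the perverse-coherent t-structure on $Z_a$, and Artinianity --- hence finite length of every object --- follows by restriction along $\pi$ and devissage along the stratification of $Z_a$, the key point being that $Z_a$ has only \emph{finitely many} orbits, with reductive stabilizers, so that the contributing categories (equivariant vector bundles on the orbits, graded by the quantum $\C^\times$) are themselves of finite length. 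The renormalized $\r$-matrices of the abstract are a separate construction, not needed here.
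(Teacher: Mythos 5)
Your t-structure is conceptually aligned with the paper's ``Koszul-perverse'' t-structure (both transport perverse-coherent sheaves on $\cN_a$ to $\cX_a$ via the scaling $\C^\times$, and finite length follows from \cite[Thm.~3.31]{CW23}), though the paper's precise definition --- determined by requiring $\sigma_a^* i_{a*}\colon \Coh(\cX_a)\to\Coh(\cN_a)$ to be t-exact --- is not obviously the same as your ``connective iff $R\pi_*$ connective in each weight'' recipe and you would need to verify the two agree. The genuine gap is in your treatment of t-exactness of the Hall product. You split $\cF_a * \cF_b \cong q_*p^*(\cF_a\boxtimes\cF_b)$ up to shifts, argue $p^*$ is t-exact because $p$ is a smooth/vector-bundle stack, and then try to prove $q_*$ t-exact via a semismallness-type dimension estimate for $\cX_{a,b}\times_{\cX_{a+b}}\cX_{a,b}$. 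You yourself flag this estimate as ``the main obstacle'' and ``the crux of the whole proof,'' and you do not prove it. That is a real hole, not a deferred detail.

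The paper's Proposition \ref{prop:t-exact} takes a different route that bypasses any new dimension estimate: it does not establish t-exactness of $\pi_{a,b}^*$ and $m_{a,b*}$ separately. Instead it directly computes $\sigma_{a+b}^* i_{a+b*}(\cF_a * \cF_b)$ via a chain of base changes through Cartesian diagrams of derived kernels (these are the content of the Appendix), together with filtrations whose graded pieces are twists by $\Lambda^k(\cK^\vee)[k]\langle -k\rangle$ for Koszul kernel bundles $\cK$ of scaling weight one --- each such twist being t-exact by the very design of a Koszul t-structure. The entire statement thereby collapses to the \emph{known} t-exactness of convolution of perverse-coherent sheaves on the nilpotent cones $\cN_a$ (cf.\ \cite{BFM05}). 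In other words, the quiver arrows $x_e$ and the derived thickening of $\cX_a$ are absorbed into t-exact Koszul twists, and the only hard geometric input is the classical nilpotent-cone result; no new Lusztig-style Lagrangian estimate for the Steinberg variety of $\cX_{a,b}$ is required. As a side remark, ``$p$ is smooth so $p^*$ is t-exact'' is also not quite right for a Koszul t-structure: smoothness alone does not suffice; one must additionally verify the scaling weights on the fibers, which is exactly what the $\Lambda^k(\cK^\vee)[k]\langle -k\rangle$ bookkeeping in the paper's proof is tracking.
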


We call this the Koszul-perverse t-structure since it combines the perverse-coherent t-structure on nilpotent cones $\cN_a$ with a t-structure on the fibers of the natural map $\cX_a \to \cN_a$ which is related to Koszul duality. It is defined in a similar way to the t-structure from \cite{CW23} though the proof of t-exactness of the Hall product is a bit different.

\subsection{Preprojective Hall algebras and dimensional reduction}\label{sec:preprojective}

To the quiver $Q = (I,E)$ we can associate the following sets of edges
\begin{itemize}
\item $E^\ell$: $E$ together with one additional self loop at each vertex,
\item $E'$: $E$ but with edges having the opposite orientation,
\item $\oE := E \sqcup E'$,
\item $\tE := E^\ell \sqcup E' = E \sqcup E^{\prime \ell}$.
\end{itemize}
Now consider the larger stacks $\cY_a \supset \cX_a$ defined like $\cX_a$ but without the nilpotency condition on $\phi$. The stacks $\cY_a$ are equivalently\footnote{We thank Olivier Schiffmann for pointing this out.} moduli spaces of representations of $Q^\ell = (I,E^\ell)$ subject to the natural commutation relations associated with each edge $e \in E$. On the other hand, what is usually studied in the context of K-theoretic Hall algebras is the Hall algebra associated to the preprojective algebra $\Pi_Q$. In other words, one uses the moduli spaces $\Rep_a(\Pi_Q)$ of representations of $\oQ = (I,\oE)$ subject to the preprojective commutation relation. We will now explain why the two corresponding Hall categories are equivalent. 

The stack $\cY_a$ can be realized as the (derived) vanishing locus of a section $s$ of a bundle over the moduli stack $\Rep_a(Q^\ell)$ of representations of $Q^\ell$ without relations. By dimensional reduction \cite{Isik13,Hir17} our category $\Coh(\cY_a)$ is equivalent to the category of graded matrix factorizations $\MF^\gr(\Rep_a(\tQ),W)$ where $\tQ = (I,\tE)$ and $W$ is the potential associated with the section $s$. 

Similarly, the stack $\Rep_a(\Pi_Q)$ can be realized as the (derived) vanishing locus of a section of a bundle over $\Rep_a(\oQ)$. Dimensional reduction once again provides an equivalence with $\MF(\Rep_a(\tQ),W)$. Combining these equivalences gives an equivalence between $\Coh(\cY_a)$ and $\Coh(\Rep_a(\Pi_Q))$. Moreover, by \cite[Prop. 2.3.9]{Toda24} (see also \cite[Appendix ]{AG15}), objects in $\Coh(\cY_a)$ supported on $\cX_a \subset \cY_a$ correspond to objects in $\Coh(\Rep_a(\Pi_Q))$ with nilpotent singular support (in the sense of Arinkin-Gaitsgory). 

In this paper we prefer to work with $\Coh(\cX_a)$ rather than $\Coh_\Nilp(\Rep_a(\Pi_Q))$ because it is easier to impose the nilpotency condition on $\cY_a$ than the singular support condition on $\Rep_a(\Pi_Q)$. As a result it is also easier to define the t-structure from Theorem \ref{thm:main1} on $\cX_a$. Alternatively, we could have worked with $\MF^\gr(\Rep_a(\tQ),W)$ but we prefer the language of coherent sheaves to that of matrix factorizations. 

\subsection{Renormalized $\r$-matrices}

The theory of renormalized $\r$-matrices was introduced in \cite{KKK18} in the context of quiver Hecke algebras. It was subsequently used in \cite{KKKO18} to study monoidal categorifications of cluster algebras. We review some of the basic definitions and properties in Section \ref{sec:r-matrices}. The following is the other main result in this paper. 

\begin{Theorem}\label{thm:main2}
The abelian categories $\H_Q$ carry renormalized $\r$-matrices. 
\end{Theorem}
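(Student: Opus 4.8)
The plan is to construct renormalized $\r$-matrices on $\H_Q$ by transporting the standard construction from the K-theoretic Hall algebra setting, using the geometry of the convolution diagrams that define the Hall product. Recall that for objects $\cF \in \Coh(\cX_a)^\heartsuit$ and $\cG \in \Coh(\cX_b)^\heartsuit$, the Hall product $\cF \star \cG$ and the opposite product $\cG \star' \cF$ are both computed via correspondences involving the stack of short exact sequences of quiver representations with nilpotent endomorphisms. The first step is to produce a natural morphism $R_{\cF,\cG}\colon \cF \star \cG \to \cG \star \cF$ (a priori only after a shift or a spectral-parameter specialization): geometrically this should come from comparing the two convolution diagrams, which differ by the swap of the sub- and quotient-representations, together with a Fourier–Mukai style transform or an intertwiner built from the tautological bundles. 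Because of the $\C^\times$-action giving the quantum grading, objects come in families over $\Spec \k[q^{\pm 1}]$ (or a formal neighbourhood thereof), and the naive intertwiner $R_{\cF,\cG}$ will vanish to some finite order along a divisor $q^{N} = $ (something); dividing by the largest such power yields the \emph{renormalized} $\r$-matrix $\mathbf{r}_{\cF,\cG}$, which is then by construction nonzero.

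The key steps, in order, are: (1) set up the spectral/grading deformation precisely — introduce a formal parameter $z$ by twisting one of the two factors by the extra $\C^\times$ used to define the t-structure, so that $\cF \star (z\cdot \cG)$ and $(z \cdot \cG) \star \cF$ make sense as objects over a punctured formal disk, and check they extend to the whole disk (this uses t-exactness of $\star$ from Theorem \ref{thm:main1} so that everything stays in the heart $\H_Q$); (2) construct the universal intertwiner — build a nonzero morphism in $\Hom_{\H_Q\llbracket z \rrbracket}\big(\cF \star (z\cdot \cG),\, (z\cdot \cG)\star \cF\big)$, which one gets because at the level of the Grothendieck group/K-theoretic Hall algebra the product is commutative up to the known $\r$-matrix of Varagnolo–Vasserot, so the two objects have the same class and one can lift a suitable K-theory identity to an actual morphism using that the category is finite length with enough structure (here one may want $\cF,\cG$ simple, then extend by the standard formalism); (3) compute the order of vanishing and renormalize — divide by $z^{k}$ where $k$ is maximal such that the morphism still factors, obtaining $\mathbf{r}_{\cF,\cG}$ with $\mathbf{r}_{\cF,\cG}|_{z=0} \neq 0$; (4) verify the axioms of a renormalized $\r$-matrix from \cite{KKK18} as recalled in Section \ref{sec:r-matrices}: that $\mathbf{r}_{\cF,\cG}$ is functorial, that it satisfies the braid-like compatibility with $\star$ (a Yang–Baxter / hexagon relation holding up to powers of $z$ before renormalization), and the normalization $\mathbf{r}_{\cF,\cF}|_{z=0} = \id$ (or the appropriate statement for $\cF$ simple).

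I expect the main obstacle to be step (2): producing an honest \emph{nonzero} morphism between the two deformed products, rather than just an equality of classes in the Grothendieck group. In the quiver Hecke algebra setting of \cite{KKK18} this morphism is essentially tautological because the convolution/induction functor has a manifest intertwiner coming from the symmetric group action; here the analogous statement requires a genuinely geometric input — some base-change or Fourier transform identity between the two correspondence stacks (sub $\subset$ total $\supset$ quotient versus quotient $\subset$ total $\supset$ sub). The cleanest route is probably to realize $\star$ via a single self-correspondence on $\bigsqcup_a \cX_a$ and observe that the swap of a sub and its quotient is an \emph{automorphism} of the relevant Steinberg-type stack after the $z$-twist, inducing the universal $R$-matrix on pushforward; then t-exactness guarantees it lands in $\H_Q$ after renormalization. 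A secondary subtlety is checking that the order of vanishing $k$ is finite (equivalently that $\mathbf{r}_{\cF,\cG}$ is not identically zero), which should follow from the nondegeneracy of the corresponding $\r$-matrix on the K-theoretic Hall algebra of Varagnolo–Vasserot, or alternatively from a direct computation on simples using the compatibility of the Koszul–perverse t-structure with the grading.
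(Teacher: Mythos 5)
Your high-level strategy is right in broad strokes: introduce a one-parameter deformation of the product, construct an intertwiner over the generic locus, and renormalize by dividing by the maximal power of the parameter. This is indeed the architecture of the paper's proof (Section \ref{sec:r-matrices}), and your intuition that the key is to exhibit the swap of sub and quotient as an \emph{isomorphism} after deformation is exactly the right instinct. However, the mechanism you propose for producing the deformation does not work, and this is not a small gap — it is the main technical content of the proof.

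You propose to ``introduce a formal parameter $z$ by twisting one of the two factors by the extra $\C^\times$ used to define the t-structure,'' and separately you suggest that the $\C^\times$-action giving the quantum grading makes objects ``come in families over $\Spec \k[q^{\pm 1}]$.'' Neither of these is correct. Twisting by a character of the scaling or loop $\C^\times$ only shifts the grading of an object; it does not deform the underlying geometry, and in particular it does not produce a flat family over a curve whose special and generic fibers you could compare. What the paper actually does (Section 4.2) is introduce a genuinely new geometric deformation: one replaces the nilpotent cone $N_a$ by the space $N_a(z)$ of matrices all of whose eigenvalues equal $z$, giving families $\cM_a^{s_a}(z)$ over $\bA^1 = \Spec\C[z]$ and a convolution landing in $\cM_{(a,b)}^{s_{a+b}}(z_1,z_2)$ over $\bA^2$. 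The crucial geometric input, which you gesture at but do not pin down, is then that over $\bA^2 \setminus \Delta$ (i.e.\ $z_1 \ne z_2$) the flag $V_a \subset V_a \oplus V'_b$ is canonically recovered from the generalized $z_1$-eigenspace, and the commutator section $s'_{a,b}$ on the off-diagonal block becomes invertible (Lemma \ref{lem:cancel}), so that the correspondence stack $\cM_{a,b}^{s_{a,b}}(z_1,z_2)$ splits and admits the involution $\htau$ of Proposition \ref{prop:middle}. This gives the actual isomorphism $j^*\tau^*(\tcF_b * \tcF_a) \cong j^*(\tcF_a * \tcF_b)$ over $\bA^2 \setminus \Delta$. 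Restricting to the anti-diagonal and extending the identity morphism from $\bA^1 \setminus \{0\}$ produces $\r_{\cF_a,\cF_b}$ directly; there is no need to ``lift a K-theory identity to a morphism,'' and indeed such a lift would not be available from finite-length alone (equality of classes in $K_0$ does not produce morphisms). Your step (2) worry is thus well placed, but the resolution is not a Fourier--Mukai transform or a K-theoretic nondegeneracy argument — it is the eigenvalue deformation and the resulting splitting over the generic locus.
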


In \cite{CW18} we explained how the chiral structure of the affine Grassmannian can be used to define renormalized $\r$-matrices for the coherent Satake category. The same argument was used in \cite{CW23} to define renormalized $\r$-matrices in categories which categorify Coulomb branches. In this paper the categories $\H_Q$ do not carry a chiral structure. Nevertheless, we show in Section \ref{sec:diagonal} that they satisfy a similar property (cf. diagram \eqref{eq:tau}) which suffices for the purposes of defining renormalized $\r$-matrices (which is done in Section \ref{sec:r}). 

The existence of renormalized $\r$-matrices implies (under some technical conditions) various structural results (cf. \cite[Thm. 3.2]{KKKO15}). Such results, together with the connection between $\H_Q$ and $U_q(L\g)^+$ discussed in Section \ref{sec:quantumgroups}, give conceptual explanations for identities such as the Serre relation $[f_i,[f_i,f_{i+1}]_q]_{q^{-1}} = 0$ in $U_q(L\sl_n)^+$. 

Inspired by the quantum affine Schur-Weyl duality functors defined in \cite{KKK18}, renormalized $\r$-matrices also facilitate the construction of functors from module categories over quiver Hecke algebras (KLR algebras) to $\H_Q$. Such a functor was defined (by different methods) in \cite{SVV22} when $Q$ is the $A_1$ quiver and the quiver Hecke algebra is of affine type $A_1^{(1)}$.

\subsection{Quantum groups}\label{sec:quantumgroups}

Suppose $Q$ is a quiver of finite or affine type. Denote by $U_q(L\g)$ the integral form of the quantum enveloping algebra of the loop algebra of the corresponding Kac-Moody algebra. In other words, a quantum affine algebra if $Q$ is of finite type and a quantum toroidal algebra (double affine algebra) if $Q$ is of affine type. We denote by $U_q(L\g)^+$ the positive half of this algebra. 

It follows from \cite[Thm. A]{VV22} that these algebras are isomorphic to the corresponding K-theoretic Hall algebras (at least when $Q$ is not of type $A_1^{(1)}$). The following is then a consequence of Theorem \ref{thm:main1}. 

\begin{Corollary}\label{cor:main1}
The classes of simples in $\H_Q$ provide a positive integral basis for $U_q(L\g)^+$. 
\end{Corollary}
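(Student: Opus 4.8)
Corollary~\ref{cor:main1} is meant to be a formal consequence of Theorem~\ref{thm:main1} combined with the identification $U_q(L\g)^+ \cong K$-theoretic Hall algebra of \cite[Thm.~A]{VV22}, so the proof should be short and consist mainly of assembling already-available inputs. First I would recall that, by the general yoga of categorification, the split Grothendieck group (or the $K_0$ of the triangulated category $\Coh(\cX_a)$) carries an action of the Hall product, and that Theorem~\ref{thm:main1} upgrades this to a \emph{monoidal abelian} category $\H_Q = \bigoplus_a \Coh(\cX_a)^\heartsuit$ whose $K_0$ inherits the Hall algebra structure. The $\C^\times$ giving the quantum grading (see the footnote to the definition of $\cX_a$) makes $K_0(\H_Q)$ a module over $\Z[q,q^{-1}]$, so that $K_0(\H_Q) \otimes_{\Z[q,q^{-1}]} \bigl(\text{localization}\bigr)$ recovers the K-theoretic Hall algebra on the nose; here I would cite \cite{VV22} (and the dimensional-reduction discussion in Section~\ref{sec:preprojective} identifying $\Coh(\cX_a)$ with $\Coh_\Nilp(\Rep_a(\Pi_Q))$) to get the isomorphism with $U_q(L\g)^+$ for $Q$ of finite or affine type (excluding $A_1^{(1)}$, as flagged).

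The second ingredient is that $\H_Q$ is a \emph{finite length} abelian category, again by Theorem~\ref{thm:main1}. For any finite length abelian category, the classes of the simple objects form a $\Z_{\ge 0}$-basis of $K_0$: every object has a finite Jordan--Hölder filtration, the multiplicities are well-defined (Jordan--Hölder), and distinct simples have linearly independent classes. Combining this with the monoidality (the Hall product is t-exact, hence descends to a product on $K_0(\H_Q)$ compatible with the Hall algebra structure) gives that the simples of $\H_Q$ form a basis of the Hall algebra which is \emph{positive} (structure constants of the product in this basis are polynomials in $q$ with nonnegative integer coefficients, since $[A][B] = [A \ast B]$ and $A \ast B$ is an honest object of $\H_Q$, whose JH multiplicities over $\Z[q]$ are nonnegative as the $q$-grading comes from a $\C^\times$-weight decomposition) and \emph{integral} (the $q$-grading is by a single $\C^\times$, so weight spaces are finite-dimensional and multiplicities lie in $\Z[q,q^{-1}]$, in fact in $\Z_{\ge 0}[q,q^{-1}]$).

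Then the proof is: transport the simple basis of $K_0(\H_Q)$ through the isomorphism $K_0(\H_Q) \cong U_q(L\g)^+$ of \cite{VV22}. One should remark that this basis deserves to be called a (dual) canonical basis, as announced in the abstract, but for the Corollary as stated only the three adjectives "positive'', "integral'', "basis'' need to be checked, and all three are immediate from "finite length monoidal abelian category categorifying the Hall algebra.''

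\smallskip

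\noindent\textbf{Anticipated obstacle.} The genuinely nontrivial point — already absorbed into the hypotheses I am allowed to use — is Theorem~\ref{thm:main1} itself, i.e.\ the existence of the Koszul-perverse t-structure and the t-exactness of the Hall product; that is where all the real work lies. Within the proof of the Corollary, the only subtlety is bookkeeping around the base: one must be careful that the $T = \C^\times \times \C^\times$ quotient is handled so that $K_0$ of the equivariant/derived category is \emph{free} over $\Z[q,q^{-1}]$ (no torsion or completion issues), so that "basis'' is literally true and not merely "spanning set'' or "basis after tensoring with a field.'' This requires knowing $\Coh(\cX_a)$ is generated by a suitable collection of compact objects with a cellular/standardly-stratified structure — which is exactly what finite length of the heart provides — so in the end even this reduces to the cited theorem. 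I would also flag, as \cite{VV22} does, the $A_1^{(1)}$ caveat: for that quiver the identification of the Hall algebra with $U_q(L\g)^+$ is not covered by \cite[Thm.~A]{VV22}, so the Corollary is stated for all other finite and affine $Q$.
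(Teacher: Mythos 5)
The paper gives no separate proof of this corollary—it is stated as an immediate consequence of Theorem~\ref{thm:main1} together with \cite[Thm.~A]{VV22}—and your proposal correctly supplies exactly the routine verification that this entails: bounded t-structure forces $K_0(\Coh(\cX_a)) \cong K_0(\Coh(\cX_a)^\heartsuit)$, finite length gives the Jordan--H\"older basis of simples, t-exactness of the Hall product gives nonnegative integral structure constants, and loop-$\C^\times$ equivariance gives freeness over $\Z[q,q^{-1}]$. You also correctly flag the $A_1^{(1)}$ caveat and the dimensional-reduction identification of $\Coh(\cX_a)$ with $\Coh_\Nilp(\Rep_a(\Pi_Q))$ from Section~\ref{sec:preprojective}, both of which the paper mentions; this is the same approach the paper intends.
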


If $Q$ is of finite type then the resulting integral basis is the so called dual canonical basis of (the positive half of) quantum affine algebras. But, as far as we know, if $Q$ is of affine type then the resulting basis for (the positive half of) quantum toroidal algebras is new. 

The ingredients usually used to define the (dual) canonical basis is a PBW basis and the bar involution. Roughly, the (dual) canonical basis is determined by being invariant under the bar involution and by being upper triangular with respect to the PBW basis. In the context of quantum affine algebras such PBW basis go back to \cite{Beck94,BCP99}. In Section \ref{sec:examples4} we briefly illustrate how the generators of this basis correspond to natural simple objects in $\H_Q$ when $Q$ is a quiver of type $A$. 

More generally, based on discussions with P\u{a}durariu, we expect that $\H_Q$ has a semi-orthogonal decomposition (cf. \cite[Thm. 1.1]{Pad24}). At the level of K-theory this semi-orthogonal decomposition provides the PBW basis. In fact, we expect that $\H_Q$ is a properly stratified category. This is known to be the case when $Q$ is a single vertex (with no edges) based on work of Bezrukavnikov \cite{Bez03}. 

As for the bar involution, in the future paper \cite{Cau25} we will define an autoequivalence of $\H_Q$ which defines the bar involution at a categorical level (cf. discussion in Section \ref{sec:symmetries}).

\subsection{Commuting stacks}\label{sec:commutingstacks}

The case when $Q$ is of type $A_1^{(1)}$ (the Jordan quiver) is of particular interest. In this case $\cX_a$ is a (semi-nilpotent) commuting stack and (the Drinfeld double of) the K-theoretic Hall algebra has been studied extensively \cite{SV13,Neg22,GN25}. Depending on the context it is sometimes called the quantum toroidal $\gl_1$ algebra, elliptic Hall algebra, the spherical double affine algebra of $\gl_\infty$, etc. 

Among these, in a sequence of papers starting with \cite{GNR21} the authors conjecture various precise relationships between affine Hecke categories, Khovanov-Rozansky link homology and $\Coh(\cX_a)$. In particular, it was suggested (cf. \cite[Problem 1.1]{GN25}) that there should exist a functor
$$\Tr({\rm ASBim}_a)^{\Kar} \to \Coh(\cX_a)$$
where ${\rm ASBim}$ is the affine Soergel bimodule category, $\Tr$ denotes its trace and $\Kar$ its idempotent completion. It was shown in \cite{GN23} that the left side is generated by (the products) of certain objects $H_{m,n}$ where $(m,n) \in \N \times \Z$. It is subsequently conjectured in \cite{GN25} that the images $\cH_{m,n}$ of these objects are given by certain explicit formulas (\cite[Eq. 20]{GN25}). Furthermore, the pullback of this class to the Hilbert scheme of points $\Hilb^m(\C^2,\C)$ recovers the Khovanov-Rozansky homology of the $(m,n)$ torus link (cf. \cite{GN15}).  

On the other hand, for every $(m,n)$ as above there exists a natural simple object $\cP_{m,n} \in \Coh(\cX_m)^\heartsuit$ (see Section \ref{subsec:simples}). These objects should categorify the generators $P_{m,n}$ of the elliptic Hall algebra of Burban and Schiffmann \cite{BS12}. It is natural to expect that $\cH_{m,n} \cong \cP_{m,n}$. In the forthcoming paper \cite{CPT25} we will use the semi-orthogonal decomposition from \cite{Pad23} and the t-structure from Theorem \ref{thm:main1} to study such questions and, more generally, to understand the derived category of commuting stacks in greater depth. 

The appearance of Hecke categories in this context is also related to the geometric Langlands program and the study of affine character sheaves \cite{BNP17,LNY25}. Building on the case of the Jordan quiver one can take $Q$ to be the quiver with one vertex and $g$ self loops. In this case, the K-theoretic Hall algebra can be identified with the (universal) spherical Hall algebra of a smooth projective curve of genus $g$ \cite{SV12}. One can try to understand this in the Langlands context as a relation between $\H_Q$ and the category of nilpotent sheaves on the moduli stack of bundles on a genus $g$ curve. 

\subsection{Further symmetries}\label{sec:symmetries}

In \cite{Cau25} we will study two monoidal equivalences 
\begin{equation}\label{eq:equivalences}
\bkappa_\Omega: \H_Q \rightleftarrows  \H_{Q'}: \varpi^*
\end{equation}
where $Q' = (I,E')$ is the quiver $Q$ with the opposite orientation. The equivalence $\bkappa_\Omega$ is a contra-variant functor induced by linear Koszul duality \cite{MR10}. It depends on a choice of dualizing object $\Omega$ which we fix. In the case of nilpotent cones ({\it i.e.} when $Q$ has no edges) $\bkappa_\Omega$ reduces to the duality functor induced by $\Omega$.
 
The equivalence $\varpi^*$ is pullback with respect to an isomorphism of stacks $\varpi: \cX_Q \to \cX_{Q^*}$. It is a categorical analogue of the isomorphism $\varpi^*$ of \cite[Sec. 9.8]{Lus98} which acts on the K-theory of the Steinberg variety (and thus induces an involution of the affine Hecke algebra). In the case of nilpotent cones ({\it i.e.} when $Q$ has no edges) $\varpi$ reduces to the involution denoted $\tau$ from \cite[Sect. 4.2]{FF21}. 

The two equivalences in \eqref{eq:equivalences} can be used to define the bar auto-equivalence $\overline{\cF} := \varpi^* \circ \bkappa_\Omega(\cF)$ on $\H_Q$. When $Q$ has no edges, this auto-equivalence is denoted $\iota$ in \cite[Sect. 4.2]{FF21}. In \cite{Cau25} we will check that, up to loop shifts, the bar equivalence fixes simples in $\H_Q$.

\subsection{Acknowledgements}
We thank Dylan Butson, Tudor P\u{a}durariu, Olivier Schiffmann, Eric Vasserot and Harold Williams for many insightful discussions on Hall algebras and related topics and again P\u{a}durariu for helpful feedback on an earlier version of the paper. This research was supported by NSERC Discovery Grant 2019-03961, the Fondation des Sciences Math\'ematiques de Paris and Universit\'e d'Orsay. 

\section{Hall categories}

In this paper we will work with spaces $X$ that are derived geometric stacks of finite type. In fact, all stacks will be quotient stacks so it is possible to work with normal (derived) schemes but the language of stacks clarifies and simplifies the exposition. If $X$ is such a stack we will denote by $\QCoh(X)$ the $\infty$-category of quasi-coherent sheaves on $X$ and by $\Coh(X)$ the subcategory of coherent sheaves (i.e. bounded with coherent cohomology). 

\subsection{Derived kernels}\label{sec:vanloci}

Consider a morphism of vector bundles $s: V \to W$ over some base $X$. The {\it derived kernel} $V^s$ of $s$ is given by the following fiber product 
\begin{equation}\label{eq:van-square}
\xymatrix{
V^s \ar[r]^{i_s} \ar[d]^{i_0} & V \ar[d]^{\Gamma_0} \\
V \ar[r]^-{\Gamma_s} & V \times_X W}
\end{equation}
where $\Gamma_s$ is the graph of $s$ and $\Gamma_0$ the graph of the zero map $0: V \to W$. The classical locus $(V^s)^\cl \subset V^s$ consists of the classical kernel of $V$. If $s$ is surjective then $V^s = (V^s)^\cl$ but in general $V^s$ will contain some non-trivial derived structure. Derived kernels play an important role because the moduli spaces $\cX_a$ defined below are of this form. We collect some technical results about derived kernels in the appendix. 

\subsection{Moduli spaces}\label{sec:modulispaces}

Given a finite quiver $Q = (I,E)$ denote by $Q^{\ell} = (I,E^{\ell})$ the quiver with an additional self loop at each vertex (cf. Section \ref{sec:preprojective}). For $a = (a_i) \in \N^I$ fix a vector space $V_a := \bigoplus_{i \in I} V_i$ where $\dim(V_i) = a_i$. Then the moduli space of representations of $Q^\ell$ is 
$$\bM_a := \{(\phi,x): \phi = (\phi_i) \in \bigoplus_{i \in I} \End(V_i), x = (x_e) \in \bigoplus_{e \in E} \Hom(V_{t(e)},V_{h(e)})\}$$
where, for an edge $e$, we denote by $h(e)$ and $t(e)$ its head and tail vertices. Denote by $M_a \subset \bM_a$ the subspace where every $\phi_i$ is nilpotent. The case when $Q$ has no edges plays a special role. In this case $M_a$ is just the product of nilpotent cones and we denote it $N_a$. In general we have natural projection maps $p_a: M_a \to N_a$ so that $M_a$ has the structure of a vector bundle over $N_a$. 

The spaces above carry a natural action of $G_a = \prod_{i \in I} GL_{a_i}$. We extend this to an action of $\hG_a := G_a \times T$ where $T = \C^\times \times \C^\times$ acts by $(t_1,t_2) \cdot (\phi,x) = (t_1^2 \phi, t_1^{-1} t_2 x)$. These two factors of $T$ are called the loop and the scaling $\C^\times$. We denote by $\{1\}$ and $\la 1 \ra$ the corresponding twists (i.e. tensoring with the corresponding characters) and refer to these as the loop and scaling shifts. 

Next we consider the locus in $M_a$ where $\phi$ commutes with $x$. This is given by the (derived) kernel $M_a^{s_a}$ where $s_a: M_a \to M_a \{2\}$ is the commutator of $\phi$ and $x$. In other words, we have the following  fiber product 
\begin{equation}\label{eq:M}
\xymatrix{
M_a^{s_a} \ar[rr]^{i_a} \ar[d]^{i_0} & & M_a \ar[d]^{\Gamma_0} \\
M_a \ar[rr]^-{\Gamma_{s_a}} & & M_a \times_{N_a} M_a \{2\}}
\end{equation}
where we abbreviate $i_{s_a}$ as $i_a$. We abuse notation by writing $M_a \{2\}$ for the scheme with a twisted loop $\C^\times$-action.

Note that we still have an action of $\hG_a$ on $M_a^{s_a}$ since all the maps above are $\hG_a$-equivariant. To simplify notation we denote the quotient stacks $[N_a/\hG_a]$, $[M_a/\hG_a]$ and $[M^{s_a}_a/\hG_a]$ by $\cN_a$, $\cM_a$ and $\cX_a = \cM_a^{s_a}$ respectively. Thus $\Coh(\cX_a) = \Coh^{\hG_a}(M_a^{s_a})$. We also write $\cX_{Q} = \sqcup_a \cX_a$. 

\subsection{Convolution} 

We now explain how to define the Hall product on $\Coh(\cX_Q)$. For $a,b \in \N^I$ denote $V_a = \oplus_{i \in I} V_i$ and $V'_b = \oplus_{i \in I} V'_i$ with $\dim(V_i)=a_i$ and $\dim(V'_i)=b_i$. Denote by $P_{a,b} \subset G_{a+b}$ the parabolic which fixes $V_a \subset V_a \oplus V'_b$ and by $\mfp_{a,b} \subset \bigoplus_{i \in I} \End(V_i \oplus V'_i)$ its Lie algebra. Consider 
\begin{equation}\label{eq:M3}
\bM_{a,b} := \{(\phi,x): (\phi_i) \in \mfp_{a,b}, (x_e) \in \bigoplus_{e \in E} \Hom(V_{t(e)} \oplus V'_{t(e)}, V_{h(e)} \oplus V'_{h(e)}) \text{ fixing } V_a \}
\end{equation}
and $M_{a,b} \subset \bM_{a,b}$ defined by requiring that every $\phi_i$ is nilpotent. As before, if $Q$ has no edges we denote these by $N_{a,b}$ and $\bN_{a,b}$. Note that $p_{a,b}: M_{a,b} \to N_{a,b}$ is naturally a vector bundle. 

The locus where $\phi$ and $x$ commute is the fiber product 
\begin{equation}\label{eq:M2}
\xymatrix{
M_{a,b}^{s_{a,b}} \ar[rr]^{i_{a,b}} \ar[d]^{i_0} & & M_{a,b} \ar[d]^{\Gamma_0} \\
M_{a,b} \ar[rr]^-{\Gamma_{s_{a,b}}} & & M_{a,b} \times_{N_{a,b}} M_{a,b} \{2\} }
\end{equation}
where $s_{a,b}: M_{a,b} \to M_{a,b} \{2\}$ is given by the commutator of $\phi$ and $x$. The group $\hP_{a,b} := P_{a,b} \times T$ acts on the diagram above and we denote $\cM_{a,b} := [M_{a,b}/\hP_{a,b}]$ and $\cM_{a,b}^{s_{a,b}} := [M_{a,b}^{s_{a,b}}/\hP_{a,b}]$.

We have natural maps $M_a \times M_b \xleftarrow{\tpi_{a,b}} M_{a,b} \xrightarrow{\tm_{a,b}} M_{a+b}$ which are equivariant with respect to the morphisms $\hG_a \times \hG_b \leftarrow \hP_{a,b} \rightarrow \hG_{a+b}$. One can check that these maps commute with $(s_a,s_b), s_{a,b}$ and $s_{a+b}$ respectively. Thus, by Corollary \ref{cor:van2}, we get the following induced maps
$$(M_a \times M_b)^{(s_a,s_b)} = M_a^{s_a} \times M_b^{s_b} \xleftarrow{\pi_{a,b}} M_{a,b}^{s_{a,b}} \xrightarrow{m_{a,b}} M_{a+b}^{s_{a+b}}.$$
Taking quotients we also obtain maps 
$$\cM_a^{s_a} \times \cM_b^{s_b} \xleftarrow{\pi_{a,b}} \cM_{a,b}^{s_{a,b}} \xrightarrow{m_{a,b}} \cM_{a+b}^{s_{a+b}}.$$

Now $\tpi_{a,b}: \cN_{a,b} \to \cN_a \times \cN_b$ has coherent pullback and, as maps of bundles over $\cN_{a,b}$, the map $\cM_{a,b} \to \tpi_{a,b}^*(\cM_a \times \cM_b)$ is surjective. Thus Corollary \ref{cor:van2} implies that $\pi_{a,b}^*$ preserves coherence. On the other hand, $\tm_{a,b}: \cN_{a,b} \to \cN_{a+b}$ is proper and the map $\cM_{a,b} \to \tm_{a,b}^* \cM_{a+b}$ is injective so, again by Corollary \ref{cor:van2}, $m_{a,b*}$ preserves coherence. 

\begin{Definition}\label{def:Hall}
The Hall product of $\cF_a \in \Coh(\cX_a)$ and $\cF_b \in \Coh(\cX_b)$ is defined as
\begin{equation}\label{eq:defconv}
\cF_a * \cF_b := m_{a,b*} \pi_{a,b}^* (\cF_a \boxtimes \cF_b) [u_{a,b}]\{-u_{a,b}-v_{a,b}\}
\end{equation}
where $u_{a,b} := \sum_{i \in I} a_ib_i$ and $v_{a,b} := \sum_{e \in E} a_{t(e)}b_{h(e)}$. 
\end{Definition}

The extra shifts in \eqref{eq:defconv} are largely to make subsequent results and statements cleaner. For example, the calculation in Corollary \ref{cor:calc3} would be asymmetric without the loop shift in \eqref{eq:defconv}. 

\begin{Proposition}\label{prop:associative}
The product defined by \eqref{eq:defconv} is associative. 
\end{Proposition}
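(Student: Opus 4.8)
The plan is the standard one for a convolution product: produce a ``three-step flag'' moduli space through which both iterated products factor, and then reduce associativity to base change together with an elementary comparison of shifts. For $a,b,c\in\N^I$ I would introduce $M_{a,b,c}$, the moduli space of pairs $(\phi,x)$ on $V_a\oplus V_b\oplus V_c$ with $\phi$ nilpotent and both $\phi,x$ preserving the flag $0\subset V_a\subset V_a\oplus V_b\subset V_a\oplus V_b\oplus V_c$, defined in exact analogy with \eqref{eq:M3}; its commuting locus $M_{a,b,c}^{s_{a,b,c}}$ is the derived kernel of the commutator section as in \eqref{eq:M2}, and with $\hP_{a,b,c}$ the corresponding parabolic of $G_{a+b+c}$ times $T$ I write $\cM^{s_{a,b,c}}_{a,b,c}$ for the quotient stack. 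Before quotienting there are the evident forgetful maps: $\rho\colon M_{a,b,c}\to M_{a+b,c}$ forgets the step $V_a\subset V_a\oplus V_b$; $\sigma\colon M_{a,b,c}\to M_{a,b+c}$ forgets the step $V_a\oplus V_b\subset V_a\oplus V_b\oplus V_c$; and $\mu\colon M_{a,b,c}\to M_a\times M_b\times M_c$, $n\colon M_{a,b,c}\to M_{a+b+c}$ take the associated graded and forget the flag. As in the discussion preceding Definition \ref{def:Hall}, these maps commute with the relevant commutator sections, so by Corollary \ref{cor:van2} they induce maps of the derived kernels and of the stacks $\cX_\bullet$; moreover $\rho,\sigma$ are proper and $\mu$ has coherent pullback.

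The key point is that both $(\cF_a*\cF_b)*\cF_c$ and $\cF_a*(\cF_b*\cF_c)$ will be computed by the \emph{same} pair of maps $\cX_{a+b+c}\xleftarrow{\ n\ }\cM^{s_{a,b,c}}_{a,b,c}\xrightarrow{\ \mu\ }\cX_a\times\cX_b\times\cX_c$. For the first product I would use the square
$$\xymatrix{
\cM^{s_{a,b,c}}_{a,b,c}\ar[r]^-{\alpha}\ar[d]_-{\rho} & \cM^{s_{a,b}}_{a,b}\times\cX_c\ar[d]^-{m_{a,b}\times\id} \\
\cM^{s_{a+b,c}}_{a+b,c}\ar[r]^-{\pi_{a+b,c}} & \cX_{a+b}\times\cX_c
}$$
where $\alpha$ restricts $(\phi,x)$ to the sub-representation on $V_a\oplus V_b$ with its flag and to the quotient representation on $V_c$. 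On classical points a point of the fibre product is a flag $V_a\oplus V_b\subset V_a\oplus V_b\oplus V_c$ together with a compatible flag $V_a\subset V_a\oplus V_b$, i.e.\ a point of $M_{a,b,c}$; the same holds for the ambient flag spaces before imposing the commuting condition, and I expect the derived statement to follow because forming derived kernels commutes with the fibre products in play (appendix). There is a symmetric square obtained by forgetting the other step, with $\rho,\pi_{a+b,c},m_{a,b}\times\id,\alpha$ replaced by $\sigma,\pi_{a,b+c},\id\times m_{b,c},\beta$, where $\beta$ records the sub-representation on $V_a$ and the quotient on $V_b\oplus V_c$ with its induced flag. In both squares the top composite with the relevant $\pi$ equals $\mu$, and the left composite with the relevant $m$ equals $n$.

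Granting the two squares, the rest is formal. Unwinding Definition \ref{def:Hall}, using compatibility of the external product with pushforward in one factor and with pullback, and applying (quasi-coherent) base change to the first square, one gets $(\cF_a*\cF_b)*\cF_c\cong n_*\mu^*(\cF_a\boxtimes\cF_b\boxtimes\cF_c)$ up to the shift $[u_{a,b}+u_{a+b,c}]\{-u_{a,b}-v_{a,b}-u_{a+b,c}-v_{a+b,c}\}$; base change for the second square gives $\cF_a*(\cF_b*\cF_c)\cong n_*\mu^*(\cF_a\boxtimes\cF_b\boxtimes\cF_c)$ up to $[u_{b,c}+u_{a,b+c}]\{-u_{b,c}-v_{b,c}-u_{a,b+c}-v_{a,b+c}\}$, with coherence of all intermediate objects ensured by properness of $\rho,\sigma$ and coherence of $\mu^*$. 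These shifts coincide because $\sum_i a_ib_i+\sum_i(a_i+b_i)c_i=\sum_i a_i(b_i+c_i)+\sum_i b_ic_i$, and the identical manipulation with $\sum_{i\in I}$ and $a_ib_i$ replaced by $\sum_{e\in E}$ and $a_{t(e)}b_{h(e)}$ yields $v_{a,b}+v_{a+b,c}=v_{a,b+c}+v_{b,c}$.

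The hard part will be the derived Cartesianness of the two squares --- equivalently, that the commutator derived kernel on $M_{a,b,c}$ is the derived fibre product of the commutator derived kernels on $M_{a,b}$ and $M_{a+b,c}$ over the one on $M_{a+b}$, and symmetrically --- together with the check that quasi-coherent base change descends to $\Coh$ in this setting. I expect both to follow from the formal properties of derived kernels collected in the appendix (especially Corollary \ref{cor:van2}) and properness of $\rho$ and $\sigma$; once these are in place, the remaining content is purely the six-functor bookkeeping and the shift arithmetic above.
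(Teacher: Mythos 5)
Your proof is correct and follows essentially the same route as the paper: form the two-step flag space $\cM^{s_{a,b,c}}_{a,b,c}$, reduce to a Cartesian square of derived kernels, and apply base change; the square you draw is exactly (a transposition of) the top-left square in the paper's diagram, and the shift arithmetic you perform is the right check (the paper omits it). The one precision point: the ``derived Cartesianness'' you flag as the hard part is handled in the paper by Proposition~\ref{prop:van} rather than Corollary~\ref{cor:van2}, so that is the specific appendix result you would invoke.
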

\begin{proof}
For $a,b,c \in \N^I$ consider the following diagram where the top left square is Cartesian
\begin{equation}\label{eq:assoc1}
\xymatrix{
\cM_{a,b,c} \ar[r]^{\tm} \ar[d]^{\tpi} & \cM_{a+b,c} \ar[r]^{\tm_{a+b,c}} \ar[d]^{\tpi_{a+b,c}} & \cM_{a+b+c} \\
\cM_{a,b} \times \cM_c \ar[r]^{\tm_{a,b} \times \id} \ar[d]^{\tpi_{a,b} \times \id} & \cM_{a+b} \times \cM_c \\
\cM_a \times \cM_b \times \cM_c && }
\end{equation}
Writing as before $V_a = \oplus_{i \in I} V_i$, $V'_b = \oplus_{i \in I} V'_i$ and $V_c = \oplus_{i \in I} V''_i$ with $\dim(V_i) = a_i, \dim(V'_i) = b_i, \dim(V''_i) = c_i$ we can identify $\cM_{a,b,c}$ with the quotient stack $[M_{a,b,c}/\hP_{a,b,c}]$ where
\begin{align*}
M_{a,b,c} = \{(\phi,x):  & \phi \in \mfp_{a,b,c} \text{ nilpotent}, x_e \in \Hom(V_{t(e)} \oplus V'_{t(e)} \oplus V''_{t(e)}, V_{h(e)} \oplus V'_{h(e)} \oplus V''_{h(e)}) \\
& \text{ fixing $V_a$ and $V_a \oplus V'_b$ }\}
\end{align*}
Here $\hP_{a,b,c} = P_{a,b,c} \times T$ where $P_{a,b,c} \subset G_{a+b+c}$ is the parabolic which fixes the flag $V_a \subset V_a \oplus V'_b \subset V_a \oplus V'_b \oplus V''_c$ and $\mfp_{a,b,c}$ is its Lie algebra.

Moreover, the natural commutator maps $s_a, s_b, s_{a,b}$ etc. all commute with the maps in \eqref{eq:assoc1} which gives us the commutative diagram
\begin{equation}\label{eq:assoc2}
\xymatrix{
\cM_{a,b,c}^{s_{a,b,c}} \ar[r]^{m} \ar[d]^{\pi} & \cM^{s_{a+b},c}_{a+b,c} \ar[r]^{m_{a+b,c}} \ar[d]^{\pi_{a+b,c}} & \cM^{s_{a+b+c}}_{a+b+c} \\
\cM_{a,b}^{s_{a,b}} \times \cM^{s_c}_c \ar[r]^{m_{a,b} \times \id} \ar[d]^{\pi_{a,b} \times \id} & \cM^{s_{a+b}}_{a+b} \times \cM^{s_c}_c \\
\cM^{s_a}_a \times \cM^{s_b}_b \times \cM^{s_c}_c && }
\end{equation}
By Proposition \ref{prop:van} the top left square of \eqref{eq:assoc2} is still Cartesian. Thus, for $\cF_a \in \Coh(\cX_a), \cF_b \in \Coh(\cX_b), \cF_c \in \Coh(\cX_c)$ we get the following sequence of natural isomorphisms
\begin{align*}
(\cF_a * \cF_b) * \cF_c 
&\cong m_{a+b,c*} \pi_{a+b,c}^* (m_{a,b} \times \id)_* (\pi_{a,b} \times \id)^* (\cF_a \boxtimes \cF_b \boxtimes \cF_c) \\
&\cong m_{a+b,c*} \pi^* m_* (\pi_{a,b} \times \id)^* (\cF_a \boxtimes \cF_b \boxtimes \cF_c) \\
&\cong m_{a,b,c*} \pi^*_{a,b,c} (\cF_a \boxtimes \cF_b \boxtimes \cF_c)
\end{align*}
where the second isomorphism is by base change and $\pi_{a,b,c}, m_{a,b,c}$ denote the compositions along the left and upper sides of diagram \eqref{eq:assoc2}.

A similar arguments shows that we likewise have a natural isomorphism
$$\cF_a * (\cF_b * \cF_c) \cong m_{a,b,c*} \pi^*_{a,b,c} (\cF_a \boxtimes \cF_b \boxtimes \cF_c).$$
\end{proof}

\subsection{Other flavors and remarks}\label{sec:flavors}

Recall the torus $T \subset \hG$ which consists of the scaling and loop $\C^\times$s. The scaling $\C^\times$ plays a crucial role in defining the Koszul-perverse t-structure and it is important that it act with weight one on the fibers of  $M_a \to N_a$. The loop $\C^\times$ plays a role in the theory of renormalized $\r$-matrices as discussed in Section \ref{sec:r-matrices}. At the level of K-theory it corresponds to the quantum parameter $q$ that appears, for example, in quantum groups. The fact that it acts with weight $-1$ on $M_a$ is largely for aesthetic reasons so that certain results and relations are more symmetric. If one omits the loop $\C^\times$ then one looses this extra grading but the rest of the results remain unaffected. 

If one wants to define the Koszul-perverse t-structure without the scaling $\C^\times$ one needs to find another central cocharacter $\eta: \C^\times \to \hG$ (as in Section \ref{sec:koszul}) which acts like the scaling $\C^\times$ on $M_a$. This is often but not always possible. For example, it is possible if $Q$ is a finite type quiver but not when $Q$ is the Jordan quiver. If such an alternative $\eta$ exists than the scaling $\C^\times$ is redundant in the sense that $\Coh(\cX_a) \cong \bigoplus_{n \in \Z} \Coh(\cX'_a)$ where $\cX'_a$ is the corresponding stack without the scaling $\C^\times$.

On the other hand, one can also enlarge $T \subset \hG$. For example, for an edge $e \in E$ in the quiver, one can add a copy of $\C^\times$ which acts trivially on every $\phi_i$ and $x_{e'}$ when $e' \ne e$ but acts with weight one on $x_e$. Doing this allows one to generalize the symmetries discussed in Section \ref{sec:symmetries}. More precisely, if we denote by $Q^e$ the same quiver but with the edge labeled by $e$ having the opposite orientation, then this extra $\C^\times$ allows one to define equivalences 
$$\H_{Q} \rightleftarrows \H_{Q^e}$$
analogous to those in \eqref{eq:equivalences}.

Lastly, it is not hard to see that the action of the diagonal $\C^\times \subset G_a \subset \hG_a$ on $\cX_a$ is trivial. This provides a weight decomposition $\Coh(\cX_a) = \oplus_{n \in \Z} \Coh(\cX_a)_n$. In particular, $\Coh(\cX_Q) = \oplus_a \Coh(\cX_a)$ is naturally graded by $(a,n) \in \N^I \times \Z$. 

\section{Koszul-perverse t-structure}

In this section we define a t-structure on $\Coh(\cX_a)$ and discuss some of its properties. 

\subsection{Koszul t-structures}\label{sec:koszul}

Fix an affine group scheme $\hG$ acting on a classical scheme $X$ and suppose $\Coh^\hG(X)$ carries a given t-structure. Suppose $\eta: \C^\times \to \hG$ is a central cocharacter which acts trivially on $X$. This gives us a weight decomposition $\Coh^\hG(X) = \bigoplus_{n \in \Z} \Coh^\hG_n(X)$. 

Given a $\hG$-equivariant morphism $\pi: Y \to X$, we say that a t-structure on $\Coh^\hG(Y)$ is Koszul over $X$ if $- \otimes \pi^*(\cV) [-n]$ is t-exact for any $n \in \Z$ and locally free sheaf $\cV \in \Coh^{\hG}_n(X)$ (cf. \cite[Def. 3.8]{CW23}). A t-structure on $\Coh^\hG(X)$ is Koszul over $X$ if it is Koszul for the identity map $X \to X$.

\begin{Proposition}\label{prop:koszul}
Suppose $V, W \in \Coh^{\hG}_1(X)$ are $\hG$-equivariant vector bundles and $s: V \to W$. Fix a Koszul t-structure on $\Coh^\hG(X)$. Then 
\begin{enumerate}
\item there exist t-structures on $\Coh^{\hG}(V)$ and $\Coh^{\hG}(W)$ which are Koszul over $X$ and are determined by the t-exactness of $\sigma_V^*$ and $\sigma_W^*$ respectively,
\item there exists a t-structure on $\Coh^{\hG}(V^s)$ which is Koszul over $X$ and is determined by the t-exactness of $i_{s*}$.
\end{enumerate}
Here $\sigma_V: X \to V$ and $\sigma_W: X \to W$ are the zero sections and $i_s: V^s \to V$ is the map from \eqref{eq:van-square}. Both these t-structures are bounded and finite length if the t-structure on $\Coh^{\hG}(X)$ is. 
\end{Proposition}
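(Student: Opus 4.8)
The plan is to build the t-structures on $\Coh^\hG(V)$ and $\Coh^\hG(W)$ first, since these are simpler (they are pullbacks along affine bundle projections), and then to obtain the t-structure on $\Coh^\hG(V^s)$ by a standard transfer-along-a-closed-immersion argument using the derived square \eqref{eq:van-square}.

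For part (1): the projection $\rho_V: V \to X$ is an affine bundle, and $\rho_V^*$ is fully faithful with $\rho_{V*}\rho_V^* = (-) \otimes \rho_{V*}\O_V$, where $\rho_{V*}\O_V = \Sym^\bullet(V^\vee)$ is a sheaf of algebras whose graded pieces lie in $\Coh^\hG_n(X)$ for $n \le 0$ (since $V \in \Coh^\hG_1(X)$, so $V^\vee$ has weight $-1$). First I would \emph{define} the subcategories $\Coh^\hG(V)^{\le 0}$ and $\Coh^\hG(V)^{\ge 0}$ to be the preimages under $\sigma_V^*$ (equivalently, characterize them by: $\cF \in \Coh^\hG(V)^{\le 0}$ iff $\sigma_V^* \cF \in \Coh^\hG(X)^{\le 0}$, and similarly for $\ge 0$). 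The orthogonality axiom $\Hom(\cF,\cG[-1]) = 0$ for $\cF$ connective, $\cG$ coconnective reduces, by adjunction along $\rho_V$ and the projection formula, to the vanishing of $\Hom_X(\sigma_V^*\cF \otimes (\rho_{V*}\O_V)^\vee, \sigma_V^*\cG[-1])$ — the point being that the Koszul property on $X$ makes $- \otimes \pi^*(\cV)[-n]$ t-exact precisely for the $\cV$ that appear as graded pieces of $\Sym^\bullet(V^\vee)$. Truncation functors: given $\cF$, I would truncate $\sigma_V^*\cF$ on $X$ and lift along $\rho_V^*$, checking that the resulting triangle's terms land in the prescribed subcategories; here one uses that $\sigma_V^*$ is \emph{exact} as an ordinary functor (restriction to a section of a smooth affine bundle) so it commutes with the would-be truncation, and that $\sigma_V^* \rho_V^* = \id$. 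The argument for $W$ is identical. Boundedness and finite length transfer because $\sigma_V^*$ is conservative, exact, and $V \to X$ has finite cohomological dimension, so $\Sym^\bullet(V^\vee)$ contributes only finitely many Tor-degrees.

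For part (2): from \eqref{eq:van-square}, $i_s: V^s \to V$ is a closed immersion (it is the base change of $\Gamma_s: V \to V \times_X W$ along $\Gamma_0$), and by Proposition \ref{prop:van} / Corollary \ref{cor:van2} (the appendix results on derived kernels) the pushforward $i_{s*}$ is fully faithful with a well-understood image; moreover $i_{s*}$ has a left adjoint $i_s^*$ and the composition $i_s^* i_{s*}$ is computed by a Koszul-type complex built from $W^\vee$ (the conormal data of the derived kernel), whose graded pieces again lie in $\Coh^\hG_n(X)$ for $n \le 0$ since $W \in \Coh^\hG_1(X)$. I would \emph{define} $\Coh^\hG(V^s)^{\le 0} := \{ \cG : i_{s*}\cG \in \Coh^\hG(V)^{\le 0} \}$ and dually $\Coh^\hG(V^s)^{\ge 0} := \{ \cG : i_{s*}\cG \in \Coh^\hG(V)^{\ge 0} \}$, so that t-exactness of $i_{s*}$ is built in. Since $i_{s*}$ is t-exact by fiat, the orthogonality axiom on $V^s$ follows from that on $V$ together with full faithfulness of $i_{s*}$. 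The real content is constructing truncation triangles: given $\cG \in \Coh^\hG(V^s)$, one cannot simply push forward, truncate on $V$, and pull back, because $i_s^*$ is only right-t-exact. Instead I would argue that the subcategory $i_{s*}\Coh^\hG(V^s) \subset \Coh^\hG(V)$ is closed under the $V$-truncation functors — equivalently, that an object of $\Coh^\hG(V)$ is (set-theoretically) supported on $V^s$ iff its $V$-truncations are, which uses that the t-structure on $V$ is compatible with the $\Z$-grading coming from $\eta$ and that the derived thickening $V^s \hookrightarrow V$ is along a regular-embedding-like section. Then the truncation of $i_{s*}\cG$ in $\Coh^\hG(V)$ descends uniquely along $i_{s*}$ to the desired triangle in $\Coh^\hG(V^s)$.

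The main obstacle is precisely this last point in part (2): showing the essential image of $i_{s*}$ is stable under the truncation functors of the (Koszul) t-structure on $\Coh^\hG(V)$. Equivalently, one must show that the t-structure on $V$ restricts to one on the full subcategory of sheaves supported on the derived kernel $V^s$. I expect this to follow from the fact that $i_{s*}$ admits \emph{both} adjoints after completing/localizing appropriately, or more cleanly from an explicit description (via the Koszul complex of the section $s$, or via the $\MF$/dimensional-reduction picture of Section \ref{sec:preprojective}) of $i_{s*}\Coh^\hG(V^s)$ as the sheaves annihilated by a suitable action, a condition manifestly stable under truncation. Once this stability is established, uniqueness of the transferred t-structure, its characterization by t-exactness of $i_{s*}$, and the transfer of boundedness and finite length are all formal (using $i_{s*}$ conservative and finite Tor-amplitude of $i_s^*i_{s*}$, exactly as in part (1)).
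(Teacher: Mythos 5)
The paper's proof of Proposition \ref{prop:koszul} is a one-line citation: part (1) is \cite[Prop.\ 3.16]{CW23} and part (2) is \cite[Thm.\ 3.1]{CW23}. You instead attempt to reconstruct these results from scratch, which is a much more ambitious route; unfortunately the attempt has genuine gaps in both parts.

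In part (1), the key step of your truncation construction relies on the claim that ``$\sigma_V^*$ is exact as an ordinary functor (restriction to a section of a smooth affine bundle).'' This is false: $\sigma_V : X \to V$ is the zero section, a closed immersion, and $\sigma_V^*$ is derived pullback along that immersion, with nontrivial higher Tor computed by a Koszul resolution of $\sigma_{V*}\O_X$. It is $\rho_V : V \to X$, not $\sigma_V$, that is the smooth affine morphism. Moreover, the proposed truncation recipe --- truncate $\sigma_V^*\cF$ on $X$, then lift along $\rho_V^*$ --- produces $\rho_V^*\tau^{\le 0}\sigma_V^*\cF$, which is not a truncation of $\cF$ because $\rho_V^*\sigma_V^*\cF \ne \cF$ in general ($\rho_V^*$ is fully faithful but very far from essentially surjective, its image being the ``constant along fibers'' sheaves). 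So the existence of truncation triangles for the proposed aisles is not established. To make this work one has to pass to the description $\Coh^\hG(V) \simeq \Coh^\hG_{\Sym^\bullet(V^\vee)}(X)$ and shear the $\eta$-weight grading, which is the substance of \cite[Prop.\ 3.16]{CW23}.

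In part (2), you explicitly identify and then fail to close the central gap: showing that the essential image of $i_{s*}$ is stable under the truncation functors of the Koszul t-structure on $\Coh^\hG(V)$. The appeal to ``completing/localizing appropriately'' or to an unspecified annihilation condition is not a proof, and this is precisely the hard content of \cite[Thm.\ 3.1]{CW23}, where the derived fiber product \eqref{eq:van-square} is used to present $\Coh^\hG(V^s)$ as dg-modules over a Koszul-dual algebra on which the t-structure is controlled. You have correctly located the difficulty, but the argument as written does not resolve it, so the proposal does not establish the proposition even modulo the cosmetic issues in part (1).
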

\begin{proof}
The existence of the first two t-structures (and their properties) follows from \cite[Prop. 3.16]{CW23} while the last follows from \cite[Thm. 3.1]{CW23}. 
\end{proof}

\subsection{Koszul-perverse coherent sheaves}

We start with the perverse (coherent) t-structure on $\Coh^{\hG_a}(N_a) = \Coh(\cN_a)$ and take $\eta$ to be the scaling $\C^\times \subset T \subset \hG$. This t-structure is local in the sense that tensoring with any locally free sheaf is t-exact. Any such local t-structure can be twisted so that it becomes Koszul. This is done by shifting the local t-structure on $\Coh_n^{\hG_a}(N_a)$ by $[n]$ so that $[1]\la -1 \ra$ becomes t-exact (we call $[1]\la -1 \ra$ the Koszul shift). Applying this to the perverse t-structure we obtain the Koszul-perverse t-structure on $\Coh(\cN_a)$ and its heart $\Coh(\cN_a)^{\heartsuit}$.

\begin{Definition}\label{def:KP}
Consider the natural maps 
$$\cX_a = [M_a^{s_a}/\hG_a] \xrightarrow{i_a} [M_a/\hG_a] \xleftarrow{\sigma_a} [N_a/\hG_a].$$
By Proposition \ref{prop:koszul} there exists a unique t-structure on $\Coh(\cX_a)$ such that
$$\sigma_a^* i_{a*}: \Coh(\cX_a) \to \Coh(\cN_a)$$
is t-exact with respect to the Koszul-perverse t-structure on $\Coh(\cN_a)$. We call this the {Koszul-perverse t-structure} on $\Coh(\cX_a)$ and denote its heart $\Coh(\cX_a)^\heartsuit$. 
\end{Definition}

Note that by Proposition \ref{prop:koszul} there is also a Koszul-perverse t-structure on $\Coh([M_a/\hG_a])$ determined by the t-exactness of 
$$\sigma_a^*: \Coh([M_a/\hG_a]) \to \Coh(\cN_a).$$
We denote the heart of this t-structure $\Coh([M_a/\hG_a])^\heartsuit$. It follows from \cite[Thm. 3.31]{CW23} that $\Coh([M_a/\hG_a])^\heartsuit$ and $\Coh(\cX_a)^\heartsuit$ are bounded and of finite length.

\subsection{t-exactness of convolution}

\begin{Proposition}\label{prop:t-exact}
The Hall product on $\Coh(\cX_Q)$ restricts to a product on $\Coh(\cX_Q)^\heartsuit$. 
\end{Proposition}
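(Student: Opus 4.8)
The plan is to show that the Hall product $\cF_a * \cF_b = m_{a,b*}\pi_{a,b}^*(\cF_a \boxtimes \cF_b)[u_{a,b}]\{-u_{a,b}-v_{a,b}\}$ sends hearts to hearts by factoring it into three operations and checking each is t-exact: (i) the external product $\boxtimes$, (ii) pullback $\pi_{a,b}^*$ along the ``parabolic restriction'' correspondence, and (iii) pushforward $m_{a,b*}$ along the proper ``parabolic induction'' map, where one absorbs the homological shift $[u_{a,b}]$ and the gradings $\{-u_{a,b}-v_{a,b}\}$ into these steps appropriately. Since all the t-structures involved are the Koszul-perverse ones, characterized via Proposition \ref{prop:koszul} by t-exactness of a pushforward-to-$\cN$ functor (namely $\sigma^* i_*$), the strategy is to reduce each claim to the corresponding statement on the nilpotent-cone side $\cN_a$, where the perverse-coherent t-structure is classical and well understood, and then use the compatibility of the $s$-maps with the correspondence maps (as in the proof of Proposition \ref{prop:associative}) together with Corollary \ref{cor:van2} / Proposition \ref{prop:van} to transport the conclusion back up to $\cX$.

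The first step is to handle $\boxtimes$: for the heart $\Coh(\cX_a)^\heartsuit \boxtimes \Coh(\cX_b)^\heartsuit \subset \Coh(\cX_a \times \cX_b)^\heartsuit$ one uses that the defining functor for the product t-structure is $(\sigma_a^* i_{a*}) \boxtimes (\sigma_b^* i_{b*})$ landing in $\Coh(\cN_a \times \cN_b)$, and the Koszul-perverse t-structure on a product of nilpotent cones is the external-product t-structure (the perverse-coherent t-structure is local, and the Koszul twist is by a central shift), so t-exactness of $\boxtimes$ is immediate. The second step is $\pi_{a,b}^*$: on the $\cN$-side the map $\tpi_{a,b}: \cN_{a,b} \to \cN_a \times \cN_b$ is smooth (an affine-space bundle, being parabolic restriction), so $\tpi_{a,b}^*$ up to a cohomological shift by the relative dimension is t-exact for perverse-coherent sheaves; translating through the Koszul twist and the derived-kernel square (using that $\cM_{a,b} \to \tpi^*(\cM_a \times \cM_b)$ is surjective, so Corollary \ref{cor:van2} applies) shows $\pi_{a,b}^*[u_{a,b}]$, suitably graded-shifted, is t-exact. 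The third step is $m_{a,b*}$: here $\tm_{a,b}: \cN_{a,b} \to \cN_{a+b}$ is proper and, crucially, for the perverse-coherent t-structure pushforward along a proper map is left t-exact, and it is right t-exact — hence t-exact — because $\tm_{a,b}$ is \emph{semismall} with respect to the relevant stratification of $\cN_{a+b}$ (the fibers of parabolic induction over a stratum have dimension at most half the codimension, a standard fact for Steinberg-type correspondences). The derived-kernel bookkeeping uses that $\cM_{a,b} \to \tm_{a,b}^*\cM_{a+b}$ is injective, so again Corollary \ref{cor:van2} transports the statement to $\cX$.

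Assembling these, $\cF_a * \cF_b$ is a composite of t-exact functors and hence lies in $\Coh(\cX_{a+b})^\heartsuit$, proving the proposition. The main obstacle I anticipate is the third step — verifying that $m_{a,b*}$ is t-exact, not merely left t-exact. Properness only gives left t-exactness of pushforward for the perverse-coherent t-structure; the right t-exactness is exactly the content of a semismallness (or at least ``codimension $\ge$ defect'') estimate for the parabolic induction map between nilpotent cones, and getting this right requires a careful dimension count on the fibers of $\tm_{a,b}$ over each $\hG_{a+b}$-orbit in $N_{a+b}$, or alternatively an appeal to the analogous t-exactness result already established in \cite{CW23} for a closely related correspondence. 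One must also be attentive that the cohomological shift $[u_{a,b}]$ and the scaling/loop shifts in \eqref{eq:defconv} are precisely what is needed to land on the nose in the heart — a mismatch here would only shift the answer, but pinning down the constants $u_{a,b}, v_{a,b}$ against the relative dimensions of $\tpi_{a,b}$ and the defect of $\tm_{a,b}$ is where the bulk of the (routine but non-trivial) computation lives.
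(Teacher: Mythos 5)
Your high-level framing — use Proposition \ref{prop:koszul} to reduce t-exactness questions on $\cX$ to $\cN$ via the defining functor $\sigma^* i_*$, invoke Corollary \ref{cor:van2}/Proposition \ref{prop:van} to track the derived-kernel structure, and ultimately lean on the perverse-coherent t-exactness of convolution on nilpotent cones — is in the right spirit. But the central strategy of the proof, splitting the Hall product into three \emph{separately} t-exact pieces ($\boxtimes$, then $\pi_{a,b}^*[\cdot]$, then $m_{a,b*}$), is not what the paper does and is not justified. The paper uses the BFM05 result as a black box for the \emph{entire} composite $m_{a,b*}\pi_{a,b}^*[\cdot]$ on the $\cN$-side; it never asserts (and I doubt it is true in the generality needed) that $\tm_{a,b*}$ alone is t-exact for perverse coherent sheaves. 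Your proposed fix, a semismallness estimate for $\tm_{a,b}$, is explicitly flagged by you as ``the main obstacle,'' and you do not close it; as stated it remains a conjecture, not a proof. Worse, even if such an estimate were available, you would be re-deriving the BFM05 input rather than applying it, which is a substantially more involved route.

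Beyond that, the real technical content of the paper's argument is missing from your outline. The left square in \eqref{eq:xy1} relating $\pi_{a,b}$ and $\tpi_{a,b}$ is \emph{not} Cartesian, so you cannot simply ``transport through the derived-kernel square'' by base change. The paper deals with this by the larger diagram \eqref{eq:local4}, expressing $i_{a,b*}\pi_{a,b}^*(\cF_a \boxtimes \cF_b)$ as $\Delta^*\Gamma_{0*}\tpi_{a,b}^*(i_a \times i_b)_*(\cF_a \boxtimes \cF_b)$, and then exploiting the finite filtration of $\Delta^*\Gamma_{0*}(-)$ with quotients $-\otimes \Lambda^k(\cK^\vee_{a,b})[k]\la -k\ra$. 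Crucially, these excess-bundle twists are t-exact precisely because $\cK_{a,b}$ has scaling weight one, so the Koszul shift $[1]\la-1\ra$ absorbs them. A parallel filtration handles $i_* i^*$ on the $m$-side. This is where the proof actually lives, and your proposal never touches it. In short, the idea of reducing to $\cN$ is correct, but the decomposition-into-t-exact-pieces plan would need to be replaced wholesale by the Koszul-filtration reduction, and the semismallness claim would need to be either proved or (better) avoided by citing the convolution result on $\cN$ directly.
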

\begin{proof}
It is known that convolution preserves perverse coherent sheaves on $\cN_a$ in the same way that convolution on the affine Grassmannian preserves perverse coherent sheaves (e.g. \cite{BFM05}). Since the $\Coh(\cN_a)^\heartsuit$ is obtained by shifting the category of perverse coherent sheaves it is easy to see that convolution preserves $\Coh(\cN_a)^\heartsuit$. We will now reduce the general statement to this case.  

To simplify exposition we omit the $\{-\}$ shifts since they are t-exact for the Koszul-perverse t-structure and thus play no role. Suppose $\cF_a \in \Coh(\cM^{s_a}_a)^\heartsuit$ and $\cF_b \in \Coh(\cM^{s_b}_b)^\heartsuit$. To show $\cF_a * \cF_b \in \Coh(\cM_{a+b}^{s_{a+b}})^\heartsuit$ it suffices to show that
$$\sigma_{a+b}^* i_{a+b*}(\cF_a * \cF_b) \cong \sigma_{a+b}^* i_{a+b*} m_{a,b*} \pi_{a,b}^* (\cF_a \boxtimes \cF_b) [u_{a,b}] \cong \sigma_{a+b}^* \tm_{a,b*} i_{a,b*} \pi_{a,b}^* (\cF_a \boxtimes \cF_b) [u_{a,b}]$$
belongs to $\Coh(\cN_{a+b})^\heartsuit$.

Now consider the following commutative diagram
\begin{equation}\label{eq:xy1}
\xymatrix{
\cM_a^{s_a} \times \cM_b^{s_b} \ar[d]^{i_a \times i_b} & & \cM_{a,b}^{s_{a,b}} \ar[ll]_-{\pi_{a,b}} \ar[r]^{m_{a,b}} \ar[d]^{i_{a,b}} & \cM_{a+b}^{s_{a+b}} \ar[d]^{i_{a+b}} \\
\cM_a \times \cM_b & &\cM_{a,b} \ar[ll]_-{\tpi_{a,b}} \ar[r]^{\tm_{a,b}} &  \cM_{a+b}
}
\end{equation}
The left square in (\ref{eq:xy1}) is not Cartesian and we cannot apply base change directly. Instead consider the following commutative diagram. 
\begin{equation}\label{eq:local4}
\xymatrix{
\cM_{a,b}^{s_{a,b}} \ar[r] \ar[dd]^{i_{a,b}} & \cM_{a,b}^{f \circ s_{a,b}} \ar[r]^-{\bar{f}} \ar[d]^{i_{f \circ s_{a,b}}} & \pi_{a,b}^*(\cM_a^{s_a} \times \cM_b^{s_b}) \ar[r] \ar[d]^{\pi_{a,b}^*(i_a \times i_b)} & \cM_a^{s_a} \times \cM_b^{s_b} \ar[d]^{i_a \times i_b} \\
 & \cM_{a,b} \ar[d]^{\Gamma_0} \ar[r]^-{f} & \pi_{a,b}^*(\cM_a \times \cM_b) \ar[r] \ar[d] & \cM_a \times \cM_b \ar[d] \\ 
\cM_{a,b} \ar[r]^{\Delta} & \cM_{a,b} \times_{\pi_{a,b}^*(\cM_a \times \cM_b)} \cM_{a,b} & \cN_{a,b} \ar[r]^{\pi_{a,b}} & \cN_a \times \cN_b }
\end{equation}
where $f: \cM_{a,b} \to \pi_{a,b}^*(\cM_a \times \cM_b)$ is the map of bundles over $\cN_{a,b}$ induced by $\tpi_{a,b}$. The right-most squares are Cartesian by \eqref{eq:van-square4}, the middle square is Cartesian by \eqref{eq:van-square3B} and the left rectangle is Cartesian by \eqref{eq:van-square3A}. 

The composition along the top row of \eqref{eq:local4} is $\pi_{a,b}$ and along the middle row $\tpi_{a,b}$. Thus, by base change, we get that 
$$i_{a,b*} \pi_{a,b}^*(\cF_a \boxtimes \cF_b) \cong \Delta^* \Gamma_{0*} \tpi_{a,b}^* (i_a \times i_b)_*(\cF_a \boxtimes \cF_b).$$
Now $(i_a \times i_b)_* (\cF_a \boxtimes \cF_b) \in \Coh(\cM_a \times \cM_b)^\heartsuit$. Moreover, any simple in $\Coh(\cM_a \times \cM_b)^\heartsuit$ is the pullback $(p_a \times p_b)^*(\cG)$ of a simple $\cG \in \Coh(\cN_a \times \cN_b)^\heartsuit$ via the natural projection map $(p_a \times p_b): \cM_a \times \cM_b \to \cN_a \times \cN_b$. Thus it suffices to show that 
$$\sigma_{a+b}^* \tm_{a,b*} \Delta^* \Gamma_{0*} \tpi_{a,b}^* (p_a \times p_b)^* (\cG) [u_{a,b}] \in \Coh(\cN_{a+b})^\heartsuit$$
for any $\cG \in \Coh(\cN_a \times \cN_b)^\heartsuit$. 
 
Now, for any sheaf $\cF$, $\Delta^* \Gamma_{0*}(\cF)$ has a filtration with quotients of the form $\cF \otimes \Lambda^k(\cK^\vee_{a,b})[k]\la -k \ra$ where $\cK_{a,b}$ is the kernel of the bundle map $f$. The bundle $\cK_{a,b}$ is the pullback of a bundle from $\cN_a \times \cN_b$. More precisely, $\cK_{a,b} \cong \tpi_{a,b}^* (p_a \times p_b)^* \cK$ where, in the notation of \eqref{eq:M3}, $\cK$ is the the bundle $\oplus_{e \in E} \Hom(V_{t(e)}', V_{h(e)})$. Thus, 
$$\tpi_{a,b}^* (p_a \times p_b)^* (\cG) \otimes \Lambda^k(\cK^\vee_{a,b})[k] \la -k \ra \cong \tpi_{a,b}^* (p_a \times p_b)^* (\cG  \otimes \Lambda^k(\cK^\vee)[k] \la -k \ra).$$
But, since $\cK$ has scaling weight one, tensoring with $\Lambda^k(\cK^\vee)[k] \la -k \ra$ is t-exact for the Koszul-perverse t-structure. Thus it suffices to show that 
$$\sigma_{a+b}^* \tm_{a,b*} \tpi_{a,b}^* (p_a \times p_b)^* (\cG) [u_{a,b}] \in \Coh(\cN_{a+b})^\heartsuit$$
for any $\cG \in \Coh(\cN_a \times \cN_b)^\heartsuit$.

We can further rewrite this as $\sigma_{a+b}^* \tm_{a,b*}  p_{a,b}^* \pi^*_{a,b} (\cG) [u_{a,b}]$ since we have a commutative square
$$\xymatrix{
\cM_{a,b} \ar[r]^{\tpi_{a,b}} \ar[d]^{p_{a,b}} & \cM_a \times \cM_b \ar[d]^{p_a \times p_b} \\
\cN_{a,b} \ar[r]^{\pi_{a,b}} & \cN_a \times \cN_b }
$$
To simplify this composition further consider the following diagram
$$\xymatrix{
& \cN_{a,b} \ar[r]^{m_{a,b}} \ar[d]^{\sigma} & \cN_{a+b} \ar[d]^{\sigma_{a+b}} \\
\cM_{a,b} \ar[rd]_{p_{a,b}} \ar[r]^i & \cM'_{a,b} \ar[r]^{\tm'_{a,b}} \ar[d]^p & \cM_{a+b} \ar[d]^{p_{a+b}} \\
& \cN_{a,b} \ar[r]^{m_{a,b}} & \cN_{a+b}}
$$
where $\cM'_{a,b}$ is by definition the fiber product. Note that both squares are Cartesian and the composition along the middle row is $\tm_{a,b}$. We can rewrite 
$$\sigma_{a+b}^* \tm_{a,b*}  p_{a,b}^* \pi^*_{a,b} (\cG) \cong \sigma_{a+b}^* \tm'_{a,b*} i_* i^* p^* \pi^*_{a,b} (\cG).$$
But $i$ is an embedding of vector bundles whose cokernel $\cK'_{a,b}$ is the pullback $p^* \pi_{a,b}^*(\cK')$ of a bundle $\cK'$ on $\cN_a \times \cN_b$. Thus, for any $\cF$, $i_*i^*(\cF)$ has a filtration with quotients of the form $\cF \otimes p^* \pi_{a,b}^* \Lambda^k(\cK^{\prime \vee})[k] \la -k \ra$ and, since $\cK'$ has scaling weight one, we can argue as before to reduce to showing that 
$$\sigma_{a+b}^* \tm'_{a,b*} p^* \pi^*_{a,b} (\cG) \in \Coh(\cN_{a+b})^\heartsuit$$
for any $\cG \in \Coh(\cN_a \times \cN_b)^\heartsuit$. But, by base change, we have
$$\sigma_{a+b}^* \tm'_{a,b*} p^* \pi^*_{a,b} (\cG) \cong m_{a,b*} \sigma^* p^* \pi^*_{a,b} (\cG) \cong m_{a,b*} \pi_{a,b}^* (\cG)$$
which belongs to $\Coh(\cN_{a+b})^\heartsuit$ since the convolution product on $\cN$ which is t-exact. 
\end{proof}

We will denote the monoidal, abelian category $(\Coh(\cX_Q)^\heartsuit, *)$ by $\H_Q$. Note that the decomposition $\Coh(\cX_Q) = \oplus_{a,n} \Coh(\cX_a)_n$ discussed in Section \ref{sec:flavors} restricts to give a decomposition $\H_Q = \oplus_{a,n} \Coh(\cX_a)^\heartsuit_n$.

\subsection{Simple objects}\label{sec:simples}

It turns out that simples in $\Coh(\cX_a)^\heartsuit$ are naturally in bijection with simples in $\Coh(\cN_a)^\heartsuit$. In particular, this indexing of simples in $\Coh(\cX_a)^\heartsuit$ is independent of the quiver $Q$. We will now describe these simples in more detail. 

A simple in $\Coh(\cX_a)^\heartsuit$ will have support whose image in $\cN_a$ is the closure $\overline{\bO}_\alpha$ of some orbit $\bO_\alpha \subset \cN_a$. Now consider the diagram in \eqref{eq:simplesA}.
\begin{equation}\label{eq:simplesA}
\xymatrix{
\cU_\alpha^\cl \ar[r]^{\iota_\alpha} \ar[d]^{\pi_\alpha} & \cU_\alpha \ar[r]^{\tj_\alpha} \ar[d] & \cX_\alpha \ar[d] \ar[r]^{\ti_\alpha} & \cX_a \ar[d] \\
\bO_\alpha & \bO_\alpha \ar[r]^{j_\alpha} & \overline{\bO}_\alpha \ar[r]^{i_\alpha} & \cN_a }
\end{equation}
The maps $i_\alpha, j_\alpha$ denote the natural inclusions and the two squares on the right are Cartesian. The classical locus $\cU_\alpha^\cl \subset \cU_\alpha$ is a bundle over $\bO_\alpha$.

By \cite[Prop. 3.27]{CW23} we have $\cF \cong \ti_{\alpha_*}(\cF')$ for some $\cF' \in \Coh(\cX_\alpha)^\heartsuit$. Moreover, the restriction $\tj_\alpha^*(\cF')$ is isomorphic to $\iota_{\alpha*} \pi^*_{\alpha}(\cF'')$ for some simple $\cF'' \in \Coh(\bO_\alpha)^\heartsuit$. Since $\bO_\alpha$ is an orbit, we must have
$$\cF'' \cong \cV [\frac12 d_\alpha] \{-\frac12 d_\alpha\} [s]\la -s \ra$$
where $\cV \in \Coh(\bO_\alpha)$ is some irreducible vector bundle on $\bO_\alpha \subset \cN_a$, $d_\alpha = \dim(\bO_\alpha)$ and $s \in \Z$.
\begin{Remark}
The shift $[s]\la -s \ra$ is a Koszul shift which can be arbitrary since $[1]\la -1 \ra$ is t-exact for a Koszul t-structure. The shift $[\frac12 d_\alpha]$ is necessary so that $\cV [\frac12 d_\alpha]$ is perverse. The shift $\{-\frac12 d_\alpha \}$ is largely cosmetic since it can be absorbed into $\cV$. Its inclusion is related to the properties of the functor $\bkappa_{\Omega}$ which will be studied in \cite{Cau25}. Note that the Koszul shift could also be absorbed into $\cV$ but then $\cV$ would have a cohomological shift and would not be, strictly speaking, a bundle.
\end{Remark}

Conversely, starting with a simple $\cF'' \in \Coh(\cX_\alpha)^\heartsuit$ as above, $\iota_{\alpha*} \pi^*_{\alpha}(\cF'')$ has a unique extension $\cF' \in \Coh(\cX_\alpha)^\heartsuit$ (cf. \cite[Thm. 3.31]{CW23}). We denote this extension $j_{!*}(-)$ although this is just notation as we have not checked that this extension on simples is actually induced by a functor. Nevertheless, putting all this together means that any simple in $\Coh(\cX_a)$ is, up to Koszul shifts $[s]\la -s \ra$, of the form
\begin{equation}\label{eq:simples3}
\cF_\cV \cong \ti_{\alpha*} \tj_{\alpha !*} \iota_{\alpha*} \pi^*_{\alpha} (\cV) [\frac12 d_\alpha] \{- \frac12 d_\alpha\}
\end{equation}
for some irreducible bundle $\cV \in \Coh(\bO_\alpha)$. Futhermore, such irreducible bundles on $\bO_\alpha$ are in bijection with irreducible representation of the stabilizer in $\hG_a$ of a point in $\bO_\alpha$. 

\subsubsection{Example: the commutating stack}\label{subsec:simples}

Following up on the discussion from Section \ref{sec:commutingstacks}, the case when $Q$ is the Jordan quiver is particularly interesting. In this case the nilpotent cone $\cN_m$ contains a dense, open orbit which is the orbit of the principal nilpotent. The stabilizer in $G_m = GL_m$ of this point in $\cN_m$ is the subgroup of elements of the form 
$$A = \left( 
\begin{matrix}
c_1 & c_2 & \dots & c_m \\
0 & c_1 & \dots & c_{m-1} \\
\vdots & \vdots & \vdots & \vdots \\
0 & 0 & \dots & c_1 
\end{matrix} \right)$$
The characters $\chi_n$ of this group are indexed by $n \in \Z$ where $\chi_n(A) = c_1^n$. It follows that for every $(m,n) \in \N \times \Z$ we get a simple object $\cP_{m,n} \in \Coh(\cX_m)^\heartsuit_n$. 

\subsection{The abelian category $\bH_Q$}\label{sec:bH}

By omitting the nilpotency condition on $\phi$ in the definition of $\cX_a = [M_a^{s_a}/\hG_a]$ we obtain the spaces $\cY_a = [\bM^{s_a}_a/\hG_a]$. Instead of working with $\Coh(\cX_a)$ we could work with the category $\Coh_{\Nilp}(\cY_a) \subset \Coh(\cY_a)$ consisting of sheaves supported on $\cX_a \subset \cY_a$. The constructions and results above readily extend to give us a Koszul-perverse t-structure on $\Coh_\Nilp(\cY_a)$ (note however that there is no such t-structure if we omit the nilpotency support condition). 

The heart $\Coh(\cY_a)^\heartsuit$ of this t-structure defines an alternative abelian Hall category $\bH_Q := \Coh_\Nilp(\cY_Q)^{\heartsuit}$ where $\cY_Q = \sqcup_a \cY_a$. The inclusion $\cX_a \subset \cY_a$ induces a natural exact, faithful functor $\H_Q \to \bH_Q$. This functor identifies the simples in $\H_Q$ with those in $\bH_Q$. In particular, $\H_Q$ and $\bH_Q$ categorify the same algebra. While in this paper we only work with $\H_Q$ in order to slightly simplify the exposition, in some other contexts, such as when studying the relationship to KLR-module categories, it is more natural to work with $\bH_Q$. 

\section{Renormalized $\r$-matrices}\label{sec:r-matrices}

In this section we construct a system of renormalized $\r$-matrices for the monoidal category $\H_Q$. 

\subsection{Definitions}\label{sec:defs}

Following \cite{KKK18} (cf. \cite[Section 4]{CW18}) a system of (nonzero) renormalized $\r$-matrices in the context of a general (graded) monoidal category $\cC$ consists of nonzero maps 
$$\r_{M,N}: M*N \to N*M \{\L(M,N)\}$$
associated to any pair of objects $M,N \in \cC$. Here $\L(M,N) \in \Z$ and $\{-\}$ denotes the grading shift in $\cC$. These maps are required to satisfy the following properties. 
\begin{enumerate}
\item For any $M \in \cC$ the map $\r_{M,1_\cC}$ is the composition 
$$M * 1_\cC \xrightarrow{\sim} M \xrightarrow{\sim} 1_\cC * M$$
of unit isomorphisms (and similarly for $\r_{1_\cC,M}$). 
\item For any $M,N_1,N_2 \in \cC$ we have 
$$\L(M, N_1 * N_2) \leq \L(M, N_1) + \L(M, N_2).$$
If equality holds then 
$$\r_{M,N_1 * N_2} = (\id_{N_1} * \r_{M,N_2}) \circ (\r_{M,N_1} * \id_{N_2})$$
while the right-hand composition is zero if the inequality is strict. The corresponding statement with $M$ on the other side also holds. 

\item For $M,N \in \cC$ we have $\L(M,N) + \L(N,M) \ge 0$. Moreover
$$\r_{N,M} \circ \r_{M,N} \ne 0 \iff \L(M,N) + \L(N,M) = 0.$$
\item For any $M,N_1,N_2 \in \cC$ and morphism $f: N_1 \to N_2$ consider the diagram 
$$\xymatrix{
M*N_1 \ar[r]^{\r_{M,N_1}} \ar[d]_{\id_M * f} & N_1 * M \ar[d]^{f*\id_M} \\
M * N_2 \ar[r]^{\r_{M,N_2}} & N_2 * M 
}$$
where we omit the $\{-\}$ shifts. 
\begin{itemize}
\item If $\L(M,N_1) = \L(M,N_2)$ the diagram commutes.
\item If $\L(M,N_1) < \L(M,N_2)$ the bottom left composition is zero. 
\item If $\L(M,N_1) > \L(M,N_2)$ the top right composition is zero. 
\end{itemize}
The corresponding statements hold when the product with $M$ is taken on the other side. 
\end{enumerate}

\subsection{Deformations}

The key to defining renormalized $\r$-matrices is certain deformations of $\cX_a$ that we now discuss.

Denote by $N_a(z)$ the space of matrices whose eigenvalues are all equal to $z$. We have a corresponding space $M_a(z)$, section $s_a$ and vanishing locus $M_a^{s_a}(z)$ as before. There is still a natural action of $\hG_a$ and, following our earlier conventions, we denote by $\cN_a(z), \cM_a(z)$ and $\cM_a^{s_a}(z)$ their quotients by $\hG_a$. Note that the scaling $\C^\times$ acts trivially on $z$ but the loop one acts with weight $2$ (the same as it acts on $N_a$). 

We need to define a Hall product on $\Coh(\cM_a^{s_a}(z_1)) \times \Coh(\cM_b^{s_b}(z_2))$ for any $z_1,z_2$. We can define the analogue of $\cM_{a,b}^{s_{a,b}}$ as before, which we denote $\cM_{a,b}^{s_{a,b}}(z_1,z_2)$. We have have natural map
$$\pi_{a,b}: \cM_{a,b}^{s_{a,b}}(z_1,z_2) \to \cM_a^{s_a}(z_1) \times \cM_b^{s_b}(z_2).$$

To define $m_{a,b}$ we have to adjust our target category slightly. Denote by $N_{(a,b)}(z_1,z_2)$ the closure of the locus of matrices with (generalized) $z_1$-eigenspaces (resp. $z_2$-eigenspaces) of dimension $a$ (resp. $b$).  We define $M_{(a,b)}^{s_{a+b}}(z_1,z_2)$ and its $\hG_{a+b}$-quotient $\cM_{(a,b)}^{s_{a+b}}(z_1,z_2)$ as before. It is not hard to see that if $z_1=z_2$ then this recovers $\cM_{a+b}^{s_{a+b}}(z)$. Then our old construction still works to define
$$m_{a,b}: \cM_{a,b}^{s_{a,b}}(z_1,z_2) \to \cM_{(a,b)}^{s_{a+b}}(z_1,z_2).$$

Since we can identify $N_a(z)$ with $N_a$ via $N_a \ni [\phi] \mapsto [\phi + zI] \in N_a(z)$ we see that $N_a(z)$ is a trivial deformation of $N_a$ over $\bA^1 = \Spec \C[z]$. Likewise $\cM_a^{s_a}(z)$ is a trivial deformation of $\cM_a^{s_a}$ and for $\cF \in \Coh(\cM_a^{s_a})$ we denote by $\tcF \in \Coh(\cM_a^{s_a}(z))$ its natural (trivial) deformation. For $\cF_a \in \Coh(\cM_a^{s_a})$ and $\cF_b \in \Coh(\cM_b^{s_b})$ we define 
$$\tcF_a * \tcF_b := m_{a,b*} \pi_{a,b}^*(\tcF_a \boxtimes \tcF_b) [u_{a,b}] \{-u_{a,b}-v_{a,b}\} \in \Coh(\cM_{(a,b)}^{s_{a+b}}(z_1,z_2))$$
where $u_{a,b},v_{a,b}$ are as before. The argument used in Proposition \ref{prop:associative} still works to show that this convolution is associative.

\subsection{Over $\bA^2 \setminus \Delta$}\label{sec:diagonal}

Although $\cM_{(a,b)}^{s_{a+b}}(z_1,z_2) \to \bA^2$ is not a trivial family its restriction over the diagonal $\Delta \subset \bA^2$ can be identified with the trivial family $\cM_{a+b}^{s_{a+b}} \times \Delta$. In particular, under this identification, we have $(\tcF_a * \tcF_b)|_{(z,z)} \cong \cF_a * \cF_b$. More difficult is understanding restrictions over $\bA^2 \setminus \Delta$. 

The goal of this section is to show that over $\bA^2 \setminus \Delta$ we have a commutative diagram 
\begin{equation}\label{eq:tau}
\xymatrix{
\bA^2 \setminus \Delta \ar[d]^{S} & \cM_a^{s_a}(z_1) \times \cM_b^{s_b}(z_2) \ar[l] \ar[d]^{S} & & \cM_{a,b}^{s_{a,b}}(z_1,z_2) \ar[d]^{\htau} \ar[r]^{m_{a,b}} \ar[ll]_{\pi_{a,b}} & \cM_{(a,b)}^{s_{a+b}}(z_1,z_2) \ar[d]^{\tau} \\
\bA^2 \setminus \Delta & \cM_b^{s_b}(z_2) \times \cM_a^{s_a}(z_1) \ar[l] & & \cM_{b,a}^{s_{b,a}}(z_2,z_1) \ar[r]^{m_{b,a}} \ar[ll]_{\pi_{b,a}} & \cM_{(b,a)}^{s_{a+b}}(z_2,z_1)
}
\end{equation}
where $S$ is the involution exchanging the factors and the right square is Cartesian.  

First we define $\tau$. Almost from definition we have a natural commutative diagram 
$$\xymatrix{
\cN_{(a,b)}(z_1,z_2) \ar[d] \ar[r]^{\tau} & \cN_{(b,a)}(z_2,z_1) \ar[d] \\
\bA^2 \ar[r]^S & \bA^2
}$$
where $S$ exchanges the two factors of $\bA^2$. If $z_1 \ne z_2$ then the bundle $\C^{a+b}$ has a natural decomposition as $V_a \oplus V_b$ where $V_a$ (resp. $V_b$) is the generalized $z_1$-eigenspace (resp. $z_2$-eigenspace). Exchanging these two factors induces an isomorphism
$$\tau: M_{(a,b)}(z_1,z_2) \xrightarrow{\sim} M_{(b,a)}(z_2,z_1).$$ 
Since this map commutes with $s_{a+b}$ we get an induced map $\tau$ as in (\ref{eq:tau}). 

Next we define $\htau$. To do this note that we can identify $\cN_{a,b}(z_1,z_2)$ with the $\hG_{a+b}$-quotient of
\begin{align}
\label{eq:scheme1} N'_{a,b}(z_1,z_2) := \{(\phi,V): & \phi \text{ fixes } V, \dim(V) = a, \\
\notag & (\phi-z_1I)|_{V} \text{ and } (\phi-z_2I)|_{\C^{a+b}/V} \text{ are nilpotent}\}
\end{align}
where $\phi = (\phi_i) \in \oplus_{i \in I} \End(\C^{a_i+b_i})$ and $V = (V_i) \subset \oplus_{i \in I} \C^{a_i+b_i}$. The morphism $m_{a,b}: \cN_{a,b}(z_1,z_2) \to \cN_{(a,b)}(z_1,z_2)$ is then identified with the map which forgets $V$. Over $\bA^2 \setminus \Delta$ this map is an isomorphism since $V$ can be recovered as the generalized $z_1$-eigenspace.

More generally, we can identify $\cM_{a,b}(z_1,z_2)$ with the $\hG_{a+b}$-quotient of the scheme
\begin{align}
\label{eq:scheme2} M'_{a,b}(z_1,z_2) := \{(\phi,x,V): & \phi \text{ and } x \text{ fix }  V,  \dim(V) = a, \\
\notag & (\phi-z_1I)|_{V} \text{ and } (\phi-z_2I)|_{\C^{a+b}/V} \text{ are nilpotent}\}
\end{align}
where $\phi,V$ as in (\ref{eq:scheme1}) and $x = (x_e) \in M_{a,b}$. 

Now consider  the exact sequence of bundles
$$0 \to K_{a,b}(z_1,z_2) \to M_{a,b}(z_1,z_2) \xrightarrow{\alpha_{a,b}} M_a(z_1) \oplus M_b(z_2) \to 0 $$
over $N'_{a,b}(z_1,z_2)$ where $\alpha_{a,b}$ is the restriction map.The maps 
\begin{align*}
 s_{a,b}: M_{a,b}(z_1,z_2) &\to M_{a,b}(z_1,z_2) \\ 
 (s_a,s_b): M_a(z_1) \oplus M_b(z_2) &\to M_a(z_1) \oplus M_b(z_2)
\end{align*}
commute with $\alpha_{a,b}$ so we can restrict to get $s'_{a,b}: K_{a,b}(z_1,z_2) \to K_{a,b}(z_1,z_2)$. We can identify $K_{a,b}(z_1,z_2)$ with the bundle $\oplus_{e \in E} \Hom(V_{t(e)}, (\C^{a+b}/V)_{h(e)})$ and $s'_{a,b}$ with the commutator $[\phi,x]$. Now $(\phi-z_1I)|_V$ and $(\phi-z_2I)|_{\C^{a+b}/V}$ are nilpotent. Over the locus $\{z_1 \ne z_2\}$ a basic linear algebra argument implies that $[\phi,x]=0$ if and only if $x=0$. In other words, $s'_{a,b}$ is invertible when $z_1 \ne z_2$. Thus, by Lemma \ref{lem:cancel}, $\alpha_{a,b}$ induces an isomorphism
\begin{equation}\label{eq:Miso}
M_{a,b}^{\prime s_{a,b}}(z_1,z_2) \xrightarrow{\sim} M_{(a,b)}^{\prime (s_a,s_b)}(z_1,z_2)
\end{equation}
where $M'_{(a,b)}(z_1,z_2)$ is the total bundle $M_a(z_1) \oplus M_b(z_2)$ over $N'_{a,b}(z_1,z_2)$. We also have the analogous isomorphism 
\begin{equation}\label{eq:Miso2}
M_{b,a}^{\prime s_{b,a}}(z_2,z_1) \xrightarrow{\sim} M_{(b,a)}^{\prime (s_b,s_a)}(z_2,z_1)
\end{equation}
But, over the locus where $z_1 \ne z_2$, there is a natural isomorphism
\begin{equation}\label{eq:iso2}
M_{(a,b)}^{\prime (s_a,s_b)}(z_1,z_2) \to M_{(b,a)}^{\prime (s_b,s_a)}(z_2,z_1)
\end{equation}
induced by exchanging $V_a$ and $V_b$ (which has the effect of exchanging $M_a(z_1)$ and $M_b(z_2)$). Together with \eqref{eq:Miso} and \eqref{eq:Miso2} this gives an isomorphism 
$$M_{a,b}^{\prime s_{a,b}}(z_1,z_2) \xrightarrow{\sim} M_{b,a}^{\prime s_{b,a}}(z_2,z_1).$$
Quotienting by $\hG_{a+b}$ gives us $\htau$. 

In the notation above, the map 
$$m_{a,b}: M_{(a,b)}^{\prime (s_a,s_b)}(z_1,z_2) \to M_{(a,b)}^{s_{a+b}}(z_1,z_2)$$
is induced by the inclusion $M_a(z_1) \oplus M_b(z_2) \to M_{a+b}(z_1,z_2)$. Commutativity of the right square in (\ref{eq:tau}) is then evident. Since $\tau$ and $\htau$ are isomorphisms it follows that this right square is also Cartesian. 

\begin{Proposition}\label{prop:middle}
The right square in (\ref{eq:tau}) is Cartesian while the middle square is commutative. 
\end{Proposition}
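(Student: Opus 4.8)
The plan is to verify the two squares in \eqref{eq:tau} separately, with the main work going into the commutativity of the middle square.

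\medskip

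\noindent\textbf{The right square.} This is essentially already established in the paragraphs preceding the statement. We have defined $\tau$ as the quotient by $\hG_{a+b}$ of the isomorphism $M_{(a,b)}(z_1,z_2) \xrightarrow{\sim} M_{(b,a)}(z_2,z_1)$ exchanging the generalized eigenspaces, and we have seen that $m_{a,b}$ is induced by the bundle inclusion $M_a(z_1) \oplus M_b(z_2) \hookrightarrow M_{a+b}(z_1,z_2)$ over $N'_{a,b}(z_1,z_2)$. Since the eigenspace-exchange is compatible with these inclusions, commutativity of the right square is immediate at the level of the schemes $M'$, hence after quotienting. Because $\tau$ and $\htau$ are both isomorphisms (over $\bA^2 \setminus \Delta$), the square is automatically Cartesian: a commuting square two of whose parallel arrows are isomorphisms is Cartesian. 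So I would just record this in a sentence or two.

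\medskip

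\noindent\textbf{The middle square.} The claim is that the diagram
$$\xymatrix{
\cM_a^{s_a}(z_1) \times \cM_b^{s_b}(z_2) \ar[d]^{S} & \cM_{a,b}^{s_{a,b}}(z_1,z_2) \ar[d]^{\htau} \ar[l]_-{\pi_{a,b}} \\
\cM_b^{s_b}(z_2) \times \cM_a^{s_a}(z_1) & \cM_{b,a}^{s_{b,a}}(z_2,z_1) \ar[l]_-{\pi_{b,a}}
}$$
commutes over $\bA^2 \setminus \Delta$. The key is to unravel what $\pi_{a,b}$ and $\htau$ actually do in the model $M'_{a,b}(z_1,z_2)$ of \eqref{eq:scheme2}. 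By definition of $\htau$, the composite $M'^{s_{a,b}}_{a,b}(z_1,z_2) \xrightarrow{\sim} M'^{s_{b,a}}_{b,a}(z_2,z_1)$ factors as: first apply the isomorphism \eqref{eq:Miso} coming from $\alpha_{a,b}$ (restriction of $(\phi,x)$ to $V$ and to $\C^{a+b}/V$), which records the pair $(\phi|_V, \phi|_{\C^{a+b}/V})$ with $x$-data (and $x$ forced to vanish on the Hom-bundle $K_{a,b}$ when $z_1\ne z_2$); then apply the eigenspace-swap \eqref{eq:iso2}; then apply the inverse of \eqref{eq:Miso2}. On the other hand $\pi_{a,b}$ is, by construction (going back to the definition of $\pi_{a,b}$ before Definition \ref{def:Hall} and the deformed version), precisely the map which restricts $(\phi,x)$ to the subrepresentation on $V$ and to the quotient representation on $\C^{a+b}/V$ — that is, $\pi_{a,b}$ is exactly the map underlying $\alpha_{a,b}$ composed with the identification of $M_a^{s_a}(z_1)$ with the $z_1$-summand and $M_b^{s_b}(z_2)$ with the $z_2$-summand. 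Granting this identification of $\pi_{a,b}$ with $\alpha_{a,b}$, the commutativity of the middle square reduces to the tautology that exchanging the two summands of $M_a(z_1)\oplus M_b(z_2)$ and then projecting is the same as projecting and then exchanging the two factors. This is exactly the content of \eqref{eq:iso2} versus the swap $S$, so commutativity follows formally.

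\medskip

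\noindent\textbf{Main obstacle.} The one point that needs genuine care — and which I expect to be the crux — is making the identification of $\pi_{a,b}$ with the map $\alpha_{a,b}$ (suitably composed with the cancellation isomorphism \eqref{eq:Miso}) completely precise, including bookkeeping of which eigenspace maps to which factor and checking that everything is $\hG_{a+b}$-equivariant so it descends to the quotient stacks. In particular one must check that under the isomorphism $\cM_{a,b}(z_1,z_2) \cong [M'_{a,b}(z_1,z_2)/\hG_{a+b}]$ the map $\pi_{a,b}$ is the one induced by the restriction-to-$(V,\,\C^{a+b}/V)$ map and not, say, twisted by some auxiliary choice; and that $s'_{a,b}$ being invertible for $z_1\ne z_2$ (the linear algebra fact already proved: $[\phi,x]=0 \iff x=0$ when the eigenvalues are disjoint) is what makes \eqref{eq:Miso} and hence $\htau$ well-defined on the derived vanishing loci, via Lemma \ref{lem:cancel}. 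Once these identifications are in place, the commutativity is forced and the proof is short. I would therefore organize the write-up as: (i) recall the model $M'_{a,b}$ and identify $\pi_{a,b}$ and $m_{a,b}$ inside it; (ii) recall that $\htau$ is built from \eqref{eq:Miso}, \eqref{eq:iso2}, \eqref{eq:Miso2}; (iii) observe that $\pi_{b,a}\circ\htau$ and $S\circ\pi_{a,b}$ are both, literally, "restrict to the two eigen-subquotients and then swap", hence equal; (iv) note the right square commutes by the same eigenspace-swap compatibility and is Cartesian because $\tau,\htau$ are isomorphisms.
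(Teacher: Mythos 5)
Your approach to both squares is essentially the paper's. For the right square, the paper also just records that commutativity is visible from the eigenspace-swap compatibility with the inclusion $M_a(z_1)\oplus M_b(z_2)\hookrightarrow M_{a+b}(z_1,z_2)$, and that Cartesianness is automatic because $\tau,\htau$ are isomorphisms; nothing more is said, exactly as you propose.

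For the middle square your outline is on target, but the step you flag as the ``main obstacle'' is precisely the step the paper has to work for, and your sketch underestimates what that work is. It is not merely a matter of checking $\hG_{a+b}$-equivariance of a restriction map ``so it descends to the quotient stacks'': the source $\cM_{a,b}^{s_{a,b}}(z_1,z_2)$ is a quotient by $\hP_{a,b}$ (equivalently, by $\hG_{a+b}$ in the flag model $M'_{a,b}$), while the target $\cM_a^{s_a}(z_1)\times\cM_b^{s_b}(z_2)$ is a quotient by $\hG_a\times\hG_b$. There is no naive restriction map between the atlases that is equivariant for a single group acting on both sides, so ``restrict to $V$ and $\C^{a+b}/V$'' does not literally produce a morphism of stacks to $\cM_a^{s_a}(z_1)\times\cM_b^{s_b}(z_2)$ until one chooses trivializations. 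The paper's device is to introduce the $G_a\times G_b$-torsor $M'^{\,(s_a,s_b)}_{(a,b)}(z_1,z_2)^\beta$ carrying trivializations $\beta_1\colon V\xrightarrow{\sim}\C^a$, $\beta_2\colon\C^{a+b}/V\xrightarrow{\sim}\C^b$; on this torsor one can write $\pi_{a,b}$ (and $\pi_{b,a}$) explicitly as conjugate-by-$\beta$ of the restrictions (maps \eqref{eq:map2}, \eqref{eq:map3}), show the eigenspace swap \eqref{eq:iso2} lifts to an isomorphism of torsors \eqref{eq:iso3}, and then compute $\pi_{b,a}\circ\htau$ versus $S\circ\pi_{a,b}$ on the nose. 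So the answer to ``what makes the identification precise'' is: pass to the $\beta$-framed torsor, where both maps become concrete conjugated-restriction formulas. Your step (iii), ``both are literally restrict-and-swap, hence equal,'' becomes a correct one-line computation on that torsor, but is not a tautology at the stack level without it. This is an unfinished step rather than a wrong approach, but it is the nontrivial content of the proof.
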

\begin{proof}
The first claim was proved above so we focus on the second. Following the notation from (\ref{eq:Miso}) consider the scheme 
$$M'_{(a,b)}(z_1,z_2)^{\beta} := \{(\phi,x,V,\beta_1,\beta_2): x \in M_a \oplus M_b, \beta_1: V \xrightarrow{\sim} \C^a, \beta_2: \C^{a+b}/V \xrightarrow{\sim} \C^b \}$$
where $\phi,V$ are as in (\ref{eq:scheme1}). This scheme is a $G_a \times G_b$ torsor over $M_{(a,b)}^{\prime}(z_1,z_2)$ which in turn is the bundle $M_a \oplus M_b$ over $N'_{a,b}(z_1,z_2)$. There is the similar space
$$M'_{(b,a)}(z_2,z_1)^{\beta} := \{(\phi,x,V',\beta'_1,\beta'_2): x \in M_a \oplus M_b, \beta'_1: V' \xrightarrow{\sim} \C^b, \beta'_2: \C^{a+b}/V' \xrightarrow{\sim} \C^a \}$$
which is a $G_b \times G_a$-torsor over $M'_{(b,a)}(z_1,z_2)$. Considering the vanishing loci for $(s_a,s_b)$ gives us the spaces $M^{\prime (s_a,s_b)}_{(a,b)}(z_1,z_2)^{\beta}$ and $M^{\prime (s_b,s_a)}_{(b,a)}(z_2,z_1)^{\beta}$. Isomorphism (\ref{eq:iso2}) then lifts to an isomorphism of torsors
\begin{equation}\label{eq:iso3} 
M_{(a,b)}^{\prime (s_a,s_b)}(z_1,z_2)^\beta \xrightarrow{\sim} M_{(b,a)}^{\prime (s_b,s_a)}(z_2,z_1)^\beta.
\end{equation}

Using these torsors we can now identify the map $\pi_{a,b}$ as follows. We have the map 
\begin{align}
\label{eq:map2} M^{\prime (s_a,s_b)}_{(a,b)}(z_1,z_2)^\beta & \to M^{s_a}_a(z_1) \times M^{s_b}_b(z_2) \\
\notag (\phi,x,V,\beta_1,\beta_2) & \mapsto [(\beta_1 \cdot \phi|_V, \beta_1 \cdot x|_V), (\beta_2 \cdot \phi|_{\C^{a+b}/V}, \beta_2 \cdot x|_{\C^{a+b}/V})]
\end{align}
where $\beta \cdot (-) = \beta \circ (-) \circ \beta^{-1}$ is conjugation. This map is $G_{a+b}$-equivariant with respect to the natural action 
$$g \cdot (\phi,x,V,\beta_1,\beta_2) = (g\phi g^{-1}, gxg^{-1}, gV, \beta_1g^{-1}, \beta_2g^{-1})$$
on $M_{(a,b)}^{\prime (s_a,s_b)}(z_1,z_2)^\beta$ and the trivial action on $M^{s_a}_a(z_1) \times M^{s_b}_b(z_2)$. Thus we get a map 
$$[M^{\prime (s_a,s_b)}_{(a,b)}(z_1,z_2)^\beta/G_{a+b}] \to M^{s_a}_a(z_1) \times M^{s_b}_b(z_2)$$
and taking quotients by $G_a \times G_b$ on both sides recovers $\pi_{a,b}$. The map $\pi_{b,a}$ is defined similarly using
\begin{align}
\label{eq:map3} M^{\prime (s_b,s_a)}_{(b,a)}(z_2,z_1)^\beta & \to M^{s_b}_b(z_2) \times M^{s_a}_a(z_1) \\
\notag (\phi,x,V',\beta'_1,\beta'_2) & \mapsto [(\beta'_1 \cdot \phi|_{V'}, \beta'_1 \cdot x|_{V'}), (\beta'_2 \cdot \phi|_{\C^{a+b}/V'}, \beta'_2 \cdot x|_{\C^{a+b}/V'})]
\end{align}

Composing (\ref{eq:iso3}) with (\ref{eq:map3}) we get 
\begin{align}
\notag M_{(a,b)}^{\prime (s_a,s_b)}(z_1,z_2)^\beta & \to M^{s_b}_b(z_2) \times M^{s_a}_a(z_1) \\
\notag (\phi,x,V,\beta_1,\beta_2) & \mapsto [(\beta_2 \cdot \phi|_{V'}, \beta_2 \cdot x|_{V'}), (\beta_1 \cdot \phi|_{\C^{a+b}/V'} \beta_1 \cdot x|_{\C^{a+b}/V'})]
\end{align}
whose quotient recovers the composition $\pi_{b,a} \circ \htau$. Once we identify $V'$ with $\C^{a+b}/V$ and $\C^{a+b}/V'$ with $V$ this is clearly the same as the composition of (\ref{eq:map2}) with $S$, whose quotient recovers $S \circ \pi_{a,b}$. This proves the commutativity of the middle square in (\ref{eq:tau}). 
\end{proof}

\subsection{Definition of $\r$}\label{sec:r}

Note that the map $\tau$ discussed in Section \ref{sec:diagonal} over $\bA^2 \setminus \Delta$ extends to an isomorphism 
$$\tau: \cM_{(a,b)}^{s_{a+b}}(z_1,z_2) \xrightarrow{\sim} \cM_{(b,a)}^{s_{a+b}}(z_2,z_1)$$
over $\bA^2$.  Now consider $\cF_a \in \Coh(\cM_a^{s_a})$ and $\cF_b \in \Coh(\cM_b^{s_b})$. Using \eqref{eq:tau} we have 
\begin{align*}
j^* \tau^*(\tcF_b * \tcF_a) 
&\cong \tau^* m_{b,a*} \pi_{b,a}^* j^* (\tcF_b \boxtimes \tcF_a) [u_{a,b}] \\
&\cong m_{a,b*} \htau^* \pi_{b,a}^* j^* (\tcF_b \boxtimes \tcF_a) [u_{a,b}] \\
&\cong m_{a,b*} \pi_{a,b}^* S^* j^* (\tcF_b \boxtimes \tcF_a) [u_{a,b}] \\
&\cong m_{a,b*} \pi_{a,b}^* j^* (\tcF_a \boxtimes \tcF_b) [u_{a,b}] \\
&\cong j^* (\tcF_a * \tcF_b)
\end{align*}
where $j: \bA^2 \setminus \Delta \to \bA^2$ denotes the open embedding as well as its obvious base changes. Note that in the first isomorphism we used that $u_{a,b} = u_{b,a}$ and there are no $\{-\}$ shifts because these are trivial over $\bA^2 \setminus \Delta$. 

On the other hand, $\Delta^*(\tcF_a * \tcF_b) \cong \widetilde{\cF_a * \cF_b}$ and 
$$\Delta^* (\tau^*(\tcF_b * \tcF_a)) \cong \Delta^*(\tcF_a * \tcF_b) \cong \widetilde{\cF_a * \cF_b}.$$

Thus, if we denote by $\cG$ (resp. $\cG'$) the restriction of $\tcF_a * \tcF_b$ (resp. $\tau^*(\tcF_b * \tcF_a)$) to the anti-diagonal $\{(z,-z)\} \cong \bA^1 \subset \bA^2$ then one has natural isomorphisms
\begin{enumerate}
\item $\cG|_{\{0\}} \cong \cF_a * \cF_b$,
\item $\cG'|_{\{0\}} \cong \cF_b * \cF_a$ and 
\item $\cG|_V \cong \cG'|_V$ where $V = \bA^1 \setminus \{0\}$.
\end{enumerate}
Then $M := H^0(\Hom(\cG,\cG'))$ is a $\C[z]$-module whose restriction $M|_V$ is isomorphic to $H^0(\Hom(\cG|_V,\cG'|_V)) \cong H^0(\End(\cG|_V))$. We can then uniquely extend the identity morphism to some $R: \cG \to \cG' \{s\}$ (for some unique $s \in \Z$) such that the restriction $R|_{\{0\}}$ is nonzero. We denote this restriction 
$$\r_{\cF_a,\cF_b}: \cF_a * \cF_b \to \cF_b * \cF_a \{\L(\cF_a,\cF_b)\}$$
where $\L(\cF_a,\cF_b) := s$. 

The definition of these maps is essentially the same as that used in \cite[Section 5.2]{CW18}. The main difference is that the categories here do not carry a chiral structure but rather a sufficiently compatible family of deformations (discussed above). Nevertheless, the argument used in \cite[Theorem 5.10]{CW18} can easily be adjusted to show that these maps form a system of renormalized $\r$-matrices. 

\section{Examples}

We will illustrate with some calculations in the case $Q=Q_n$ is a quiver of type A. We index the vertices of $Q_n$ by $I = \{1,\dots,n\}$ and orient the edges $i \to i+1$. 

If $a = (0^{i-1},1,0^{n-i})$ then $\cX_a = \cM_a^{s_a} \cong [\pt/(\GL_1 \times T)]$. We denote by $\f_{i,\ell} \in \Coh(\cX_a)^\heartsuit$ the sheaf corresponding to the character of $\GL_1$ indexed by $\ell \in \Z$.

\subsection{Case $n=1$}\label{sec:examples1}

We have $\cX_a = [N_a/\hG_a]$ and, in particular, $\cX_1 = [\pt/(\C^\times \times T)]$ and $\cX_2 = [N_2/(GL_2 \times T)]$. The nilpotent cone $N_2$ has two $GL_2$-orbits consisting of the open orbi $U$ and its complement which is a point $p$. The irreducible Koszul-perverse sheaves supported on the point are indexed (up to equivariant and Koszul shifts) by irreducible representations of $GL_2$. For a dominant weight $\mu$ of $GL_2$ we denote by $\cP_{2,\mu}$ the corresponding irreducible. 

\begin{Proposition}\label{prop:calc1}
In $\H_{Q_1}$ every $\f_{1,\ell}$ is real and one has short exact sequences
\begin{align}
\label{eq:ses1} 0 \to \cP_{2,(\ell'-\ell-1)\omega_1 + (\ell+1) \omega_2} \{1\} \to \f_{1,\ell} * \f_{1,\ell'+1} \to \f_{1,\ell+1} * \f_{1,\ell'} \{2\} \to 0 & \text{ if } \ell \le \ell' \\
\label{eq:ses2} 0 \to \f_{1,\ell} * \f_{1,\ell'+1} \{-2\} \to \f_{1,\ell+1} * \f_{1,\ell'} \to \cP_{2,(\ell-\ell'-1)\omega_1+(\ell'+1)\omega_2} \{-1\} \to 0 & \text{ if } \ell \ge \ell'
\end{align}
where, by convention, $\cP_{2,a\omega_1+b\omega_2}=0$ if $a < 0$. 
\end{Proposition}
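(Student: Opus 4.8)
The plan is to compute the Hall product $\f_{1,\ell} * \f_{1,\ell'+1}$ directly in the case $n = 1$, where everything is explicit. Here $Q_1$ has a single vertex and no edges, so $\cX_a = \cN_a = [N_a/\hG_a]$, the Koszul-perverse t-structure is just the (Koszul-shifted) perverse-coherent t-structure on the nilpotent cone, and the Hall product is the usual convolution $m_{1,1*}\pi_{1,1}^*$ on $GL_2$ twisted by the shift $[u_{1,1}]\{-u_{1,1} - v_{1,1}\} = [1]\{-1\}$ (since $u_{1,1} = 1$ and $v_{1,1} = 0$). Concretely, $\f_{1,\ell}$ is the sheaf on $[\pt/(\GL_1 \times T)]$ attached to the $\GL_1$-character $\ell$, and $\f_{1,\ell} * \f_{1,\ell'+1}$ is the pushforward along $m_{1,1}: [\mfp_{1,1}^{\rm nilp}/\hP_{1,1}] \to [N_2/\hG_2]$ of the pullback of the corresponding one-dimensional $\hP_{1,1}$-equivariant sheaf, up to the shift. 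This is exactly the geometric realization of the quantum-group computation of $[f_i, f_i]$-type products, so the first step is to identify this convolution as (a shift of) $m_{1,1*}\O_{\widetilde N_2}(\ell, \ell'+1)$ where $\widetilde N_2 \to N_2$ is the Springer-type resolution (here the total space of a line bundle over $\P^1$) and $\O(\ell,\ell'+1)$ records the two characters on the two graded pieces of the flag.

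Second, I would analyze $m_{1,1*}$ of this line bundle. The map $m_{1,1}$ here is the resolution of the two-dimensional nilpotent cone $N_2 \cong \{$nilpotent $2\times 2$ matrices$\}$; its exceptional fiber over the point orbit $p$ is $\P^1$, and $\widetilde N_2$ is the total space of $\O_{\P^1}(-2)$ (suitably $T$-equivariantly twisted). Pushing forward a line bundle $\O(k)$ along this resolution, one gets either a vector bundle on $N_2$ (when $k \ge -1$) or, when $k \le -1$, derived pushforward picks up $R^1$ supported at $p$, giving the irreducible perverse sheaf $\cP_{2,\mu}$ for the appropriate $\mu$. The key point is that $\O(\ell,\ell'+1)$ restricted to the exceptional $\P^1$ has degree controlled by $\ell - \ell'$, so the sign of $\ell - \ell'$ (equivalently $\ell \le \ell'$ versus $\ell \ge \ell'$) governs whether $R^0$ or $R^1$ dominates. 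This is where the asymmetry between \eqref{eq:ses1} and \eqref{eq:ses2} comes from, and where one must track the weights carefully to get the precise $GL_2$-weight $(\ell' - \ell - 1)\omega_1 + (\ell + 1)\omega_2$ and the loop shifts $\{1\}$, $\{2\}$, etc.

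Third, the two short exact sequences themselves should come from a single distinguished triangle relating $\f_{1,\ell} * \f_{1,\ell'+1}$, $\f_{1,\ell+1} * \f_{1,\ell'}$ (the "swapped" product) and a cone supported on $p$. The natural source is the $\r$-matrix / convolution-exchange: the two orderings of the flag give two subvarieties of a common space, and the standard "triangle of the two Schubert cells" (or: the short exact sequence $0 \to \O(-1,1)\text{-part} \to \O(\ell,\ell'+1) \to \O|_{\rm diagonal} \to 0$ type sequence on $\widetilde N_2 \times_{N_2} \widetilde N_2$, the Steinberg variety) produces after pushforward precisely a triangle with third term a shift of $\cP_{2,\mu}$. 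One then checks that in the range $\ell \le \ell'$ this triangle is a genuine short exact sequence in $\Coh(\cN_2)^\heartsuit$ (both outer terms are in the heart, as convolution is t-exact by Proposition \ref{prop:t-exact}, and the middle term is too once $\cP_{2,\mu} \ne 0$), giving \eqref{eq:ses1}; the case $\ell \ge \ell'$ gives \eqref{eq:ses2} by the mirror-image argument (or by applying a duality/Koszul-type symmetry), with the convention $\cP_{2,a\omega_1 + b\omega_2} = 0$ when $a < 0$ accounting for the degenerate case $\ell = \ell'$ where the product is already simple and $\f_{1,\ell}$ being real.

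The main obstacle I expect is bookkeeping of the three gradings simultaneously: the $GL_2$-weight $\mu$, the loop shift $\{-\}$, and the cohomological/Koszul shift $[-]\la -\ra$, together with the fact that $\f_{1,\ell}$ lives in $\Coh(\cX_1)_\ell$ and the Hall-product shift $[u_{a,b}]\{-u_{a,b}-v_{a,b}\}$ and the intrinsic Koszul shift $[1]\la -1\ra$ conspire. Getting the precise coefficient $(\ell' - \ell - 1)\omega_1 + (\ell+1)\omega_2$ and the exact loop shifts $\{1\}$ and $\{2\}$ requires carefully computing the $\hG_2$-equivariant structure of $R^1 m_{1,1*}\O(\ell, \ell'+1)$, i.e.\ using (relative) Serre duality on the $\P^1$-fibration to rewrite $R^1 m_{1,1*}\O(k)$ as $m_{1,1*}\O(-k-2)^\vee$ twisted by the relative canonical bundle $\omega_{\widetilde N_2/N_2}$, and then reading off which irreducible $GL_2$-representation appears at the point $p$. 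Establishing that the natural map is exactly a two-step filtration (no higher terms, and the outer terms really are in the heart with no further extensions) is routine given t-exactness of the Hall product but still needs the explicit identification of $\f_{1,\ell+1} * \f_{1,\ell'}$ with the rank-two bundle (resp.\ its twist) to confirm there is genuinely nothing else in the triangle.
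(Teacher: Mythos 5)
Your setup is correct: identifying the Hall product $\f_{1,\ell}*\f_{1,\ell'+1}$ with $m_{1,1*}\bigl(\O_{\tN_2}\otimes V^\ell\otimes(\C^2/V)^{\ell'+1}\bigr)[1]\{-1\}$ on the Springer resolution $\tN_2 \to N_2$ (which, as you say, is the total space of $\O_{\P^1}(-2)$ with exceptional fiber $D\cong\P^1$ over $p$) is exactly how the paper begins, and your observation that the $R^0$/$R^1$ dichotomy and the $GL_2$-weight of $R^1$ are governed by the degree $\ell'-\ell$ on the exceptional $\P^1$ is sound.

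However, there is a genuine gap in step three: you never identify where the distinguished triangle relating $\f_{1,\ell}*\f_{1,\ell'+1}$ to $\f_{1,\ell+1}*\f_{1,\ell'}\{2\}$ actually comes from, and the two candidates you gesture at are both wrong for this calculation. The $\r$-matrix cannot be the source because it is constructed elsewhere in the paper from deformation data, and Corollary \ref{cor:calc1} \emph{uses} this proposition to compute $\L(\f_{1,\ell},\f_{1,\ell'})$, so invoking the $\r$-matrix here would be circular. The Steinberg variety $\tN_2\times_{N_2}\tN_2$ is also not involved: the Hall product $m_{1,1*}\pi_{1,1}^*$ for $n=1$ is a pushforward along a single Springer map, not a convolution with a Steinberg kernel, and your sheaf $\O(\ell,\ell'+1)$ lives on $\tN_2$, not on the fiber product, so there is no ``diagonal-vs-off-diagonal'' sequence to exploit. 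What actually produces the triangle is the tautological bundle map on $\tN_2$ itself: the nilpotent $\phi$ is a section of $\Hom(\C^2/V,V)\{2\}$, giving a morphism of line bundles $\phi:(\C^2/V)\to V\{2\}$ that vanishes exactly along the divisor $D$ (the zero fiber). This yields
\begin{equation*}
0\to\O_{\tN_2}\otimes(\C^2/V)\to\O_{\tN_2}\otimes V\{2\}\to\O_D\otimes V\{2\}\to 0,
\end{equation*}
and twisting by $V^\ell\otimes(\C^2/V)^{\ell'}$, shifting by $[1]\{-1\}$, and applying $m_{1,1*}$ gives precisely the triangle whose third term is supported at $p$; the identification $m_{1,1*}(\O_D\otimes V^{\ell+1}\otimes(\C^2/V)^{\ell'}) \cong \Sym^{\ell'-\ell-1}(\C^2)\otimes(\Lambda^2\C^2)^{\ell+1}$ for $\ell\le\ell'$ is then your $H^0(\P^1,\O(\ell'-\ell-1))$ computation, and for $\ell\ge\ell'$ one gets $R^1$, giving \eqref{eq:ses2}. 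Without identifying the map $\phi$ as the source, your outline does not actually produce the relation between the two products, only their individual cohomologies, so the heart of the argument is missing.
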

\begin{proof}
One can identify the map $m_{1,1}: \cN_{1,1} \to \cN_2$ as the $GL_2 \times T$-quotient of the map 
$$\tN_2 := \{(\phi,V): 0 \subset V \subset \C^2, \C^2 \xrightarrow{\phi} V \xrightarrow{\phi} 0 \} \to N_2$$
which forgets $V$ (the Springer resolution). In this notation one can identify $\f_{1,\ell} * \f_{1,\ell}$ with 
$$m_{1,1*}(V^\ell \otimes (\C^2/V)^\ell)[1]\{-1\} \cong m_{1,1*}(\O_{\tN_2} \otimes \det(\C^2)^\ell [1]\{-1\} \cong \O_{N_2} \otimes \det(\C^2)^\ell [1]\{-1\}.$$
This sheaf is simple Koszul-perverse which explains why $\f_{1,\ell}$ is real. 

More generally, one can identify $\f_{1,\ell} * \f_{1,\ell'+1}$ with 
$$m_{1,1*}(\O_{\tN_2} \otimes V^\ell \otimes (\C^2/V)^{\ell'+1})[1]\{-1\}$$
and similarly for $\f_{1,\ell'+1} * \f_{1,\ell}$. Now consider the map of line bundles $\phi: (\C^2/V) \to V \{2\}$. This vanishes precisely along the locus $D$ where $\phi=0$ (and $V$ is arbitrary). In particular, we get a short exact sequence 
$$0 \to \O_{\tN_2} \otimes (\C^2/V) \to \O_{\tN_2} \otimes V \{2\} \to \O_D \otimes V \{2\} \to 0.$$
Twisting by the line bundle $V^\ell \otimes (\C^2/V)^{\ell'}$ we get the exact sequence 
$$0 \to \O_{\tN_2} \otimes V^\ell \otimes (\C^2/V)^{\ell'+1} \to \O_{\tN_2} \otimes V^{\ell+1} \otimes (\C^2/V)^{\ell'} \{2\} \to \O_D \otimes V^{\ell+1} \otimes (\C^2/V)^{\ell'} \{2\} \to 0.$$
Shifting by $[1]\{-1\}$ and applying $m_{1,1*}$ we get an exact triangle 
$$\f_{1,\ell} * \f_{1,\ell'+1} \to \f_{1,\ell'+1} * \f_{1,\ell} \{2\} \to m_{1,1*}(\O_D \otimes V^{\ell+1} \otimes (\C^2/V)^{\ell'}) [1]\{1\}.$$
Since $m_{1,1}$ collapses $D \cong \P^1$ to $p \in N_2$ the right hand term is some sheaf supported on $p$. If $\ell \le \ell'$ then this sheaf is $\Sym^{\ell'-\ell-1}(\C^2) \otimes (\Lambda^2(\C^2))^{\ell+1}[1] \{1\}$ which in our notation above is $\cP_{2,(\ell'-\ell-1)\omega_1 + (\ell+1)\omega_2} [1]\{1\}$. This recovers (\ref{eq:ses1}). The proof of (\ref{eq:ses2}) is similar. 
\end{proof}

\begin{Corollary}\label{cor:calc1}
In $\H_{Q_1}$ one has $\L(\f_{1,\ell}, \f_{1,\ell'}) = \max\{2(\ell' - \ell),-2\}$ for any $\ell,\ell' \in \Z$. 
\end{Corollary}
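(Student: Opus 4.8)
The plan is to read off the value of $\L(\f_{1,\ell},\f_{1,\ell'})$ directly from the short exact sequences in Proposition \ref{prop:calc1}, using the general properties of renormalized $\r$-matrices from Section \ref{sec:defs}. Recall that by definition $\r_{\cF_a,\cF_b}\colon \cF_a*\cF_b \to \cF_b*\cF_a\{\L(\cF_a,\cF_b)\}$ is the nonzero specialization at $z=0$ of the unique extension of the generic identity map. So the first step is to identify, for each pair $(\ell,\ell')$, a nonzero map $\f_{1,\ell}*\f_{1,\ell'} \to \f_{1,\ell'}*\f_{1,\ell}\{s\}$ for an explicit $s$, and to argue that this $s$ is the correct one.

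First I would treat the case $\ell \le \ell'$ using \eqref{eq:ses1} (applied with $\ell' $ replaced by $\ell'-1$, so that the sequence reads $0 \to \cP_{2,\dots}\{1\} \to \f_{1,\ell}*\f_{1,\ell'} \to \f_{1,\ell+1}*\f_{1,\ell'-1}\{2\} \to 0$ when $\ell \le \ell'-1$). Iterating, I get a nonzero quotient map $\f_{1,\ell}*\f_{1,\ell'} \twoheadrightarrow \f_{1,\ell'}*\f_{1,\ell}\{2(\ell'-\ell)\}$, obtained by composing the surjections across the sequences for $\ell, \ell+1, \dots, \ell'-1$. This identifies a candidate value $\L = 2(\ell'-\ell)$. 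Symmetrically, for $\ell \ge \ell'$ I would use \eqref{eq:ses2} to produce a nonzero sub-object inclusion $\f_{1,\ell}*\f_{1,\ell'} \hookrightarrow \f_{1,\ell'}*\f_{1,\ell}\{-2\}$ when $\ell \ge \ell'$ (and note the two cases overlap consistently at $\ell=\ell'$, giving $\max\{0,-2\}=0$ there). This gives the candidate $\L = -2$ when $\ell \ge \ell'$. Combining the two ranges yields the claimed formula $\L(\f_{1,\ell},\f_{1,\ell'}) = \max\{2(\ell'-\ell),-2\}$.

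The main obstacle is showing that these candidate maps are \emph{the} renormalized $\r$-matrices, i.e.\ that the grading shift $s$ one reads off is minimal/correct rather than merely \emph{a} shift for which a nonzero map exists. Concretely, I need to know that $\Hom(\f_{1,\ell}*\f_{1,\ell'}, \f_{1,\ell'}*\f_{1,\ell}\{s'\}) = 0$ for all $s'$ strictly smaller than the claimed value, or equivalently that the extension $M = H^0(\Hom(\cG,\cG'))$ of the generic identity over $\C[z]$ is not divisible by $z$ beyond the expected order. I would handle this by analyzing the $\Hom$-space explicitly in the Springer-resolution model used in the proof of Proposition \ref{prop:calc1}: the products $\f_{1,\ell}*\f_{1,\ell'}$ are pushforwards $m_{1,1*}$ of explicit line bundles on $\tN_2$, so $\Hom$ between such pushforwards is computable via projection formula and the fact that $m_{1,1}$ is the Springer resolution (cf.\ $Rm_{1,1*}\O_{\tN_2} \cong \O_{N_2}$ and the analogous computation with powers of $V, \C^2/V$). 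In particular the filtration quotients $\cP_{2,\mu}\{1\}$ (resp.\ $\cP_{2,\mu}\{-1\}$) appearing in the exact sequences are simple, so whether the composed map across several sequences remains nonzero after specializing $z\mapsto 0$ reduces to checking that none of the intermediate simple subquotients is ``peeled off'' — which follows because at the generic point $z\neq 0$ all these products coincide (the $\cP$-terms vanish there, as $m_{1,1}$ collapses nothing away from the central fiber).

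Finally I would note the degenerate conventions: when a term $\cP_{2,a\omega_1+b\omega_2}$ with $a<0$ vanishes, the corresponding map in \eqref{eq:ses1} or \eqref{eq:ses2} is an isomorphism, which is exactly the situation where $\L$ achieves the boundary value $-2$ (when $\ell$ is much larger than $\ell'$) or where no further jump occurs. A sanity check is the compatibility relation from property (3) of Section \ref{sec:defs}: $\L(\f_{1,\ell},\f_{1,\ell'}) + \L(\f_{1,\ell'},\f_{1,\ell}) \ge 0$, and here $\max\{2(\ell'-\ell),-2\} + \max\{2(\ell-\ell'),-2\} \ge 0$ with equality iff $|\ell-\ell'|\le 1$, which should correspond exactly to the cases in which the composite $\r_{\f_{1,\ell'},\f_{1,\ell}}\circ \r_{\f_{1,\ell},\f_{1,\ell'}}$ is nonzero; verifying this consistency is a useful (and cheap) confirmation that the reading-off has been done correctly.
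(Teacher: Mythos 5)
Your overall strategy matches the paper's: read off $\L$ from the short exact sequences of Proposition~\ref{prop:calc1} by constructing explicit nonzero maps $\f_{1,\ell} * \f_{1,\ell'} \to \f_{1,\ell'} * \f_{1,\ell}\{s\}$ and arguing they must be the renormalized $\r$-matrices. You also correctly flag the main subtlety — showing the constructed map realizes the correct value of $s$, not merely \emph{some} nonzero shift — which the paper handles rather tersely ("up to rescaling, this composition must be $\r$"); your proposal to pin this down via the Springer-resolution model and $z$-divisibility is a reasonable, if more elaborate, route to the same conclusion.

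However, your construction of the candidate map in the regime $\ell \ge \ell'$ is wrong. You claim that \eqref{eq:ses2} alone yields a nonzero inclusion $\f_{1,\ell} * \f_{1,\ell'} \hookrightarrow \f_{1,\ell'} * \f_{1,\ell}\{-2\}$. What \eqref{eq:ses2} actually gives (after the index shift $\ell' \mapsto \ell'-1$) is an inclusion $\f_{1,\ell} * \f_{1,\ell'} \hookrightarrow \f_{1,\ell+1} * \f_{1,\ell'-1}\{2\}$, which has shift $+2$ and the wrong pair of indices on the target; it is not a map of the form $M*N \to N*M$. Worse, when $\ell - \ell' \ge 2$ the desired map to $\f_{1,\ell'} * \f_{1,\ell}\{-2\}$ is genuinely \emph{not} injective: by \eqref{eq:ses2}, $\f_{1,\ell}*\f_{1,\ell'}$ has a nontrivial proper subobject which the $\r$-matrix must kill. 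The correct construction, as in the paper, is a two-step composition: the surjection $\f_{1,\ell}*\f_{1,\ell'} \twoheadrightarrow \cP_{2,\dots}\{-1\}$ from \eqref{eq:ses2}, followed by the injection $\cP_{2,\dots}\{-1\} \hookrightarrow \f_{1,\ell'}*\f_{1,\ell}\{-2\}$ obtained from \eqref{eq:ses1} with $\ell \leftrightarrow \ell'$. The composite factors through a simple, hence is nonzero, but is neither surjective nor injective. Similarly, your iterated chain in the case $\ell \le \ell'$ is not a ``quotient map'' as your $\twoheadrightarrow$ suggests: only the first half of the chain consists of surjections (from \eqref{eq:ses1}); once the sliding indices cross, the remaining maps are injections coming from \eqref{eq:ses2}, and you need to cite both sequences. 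The composite is nonzero because a surjection followed by an injection is nonzero, not because it is a pure quotient map.
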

\begin{proof}
Exchanging $\ell$ and $\ell'$ in (\ref{eq:ses1}) gives the short exact sequence
$$0 \to \cP_{2,(\ell-\ell'-1)\omega_1 + (\ell'+1) \omega_2} \{1\} \to \f_{1,\ell'} * \f_{1,\ell+1} \to \f_{1,\ell'+1} * \f_{1,\ell} \{2\} \to 0$$
where $\ell \ge \ell'$. Using also (\ref{eq:ses2}) we form the composition 
$$\f_{1,\ell+1} * \f_{1,\ell'} \to \cP_{2,(\ell-\ell'-1)\omega_1+(\ell'+1)\omega_2} \{-1\} \to \f_{1,\ell'} * \f_{1,\ell+1} \{-2\}$$
which is nonzero since the first map is surjective and the second injective. It follows that, up to rescaling, this composition must be $\r_{\f_{1,\ell+1},\f_{1,\ell'}}$ when $\ell \ge \ell'$.

On the other hand, if $\ell \le \ell'$ we can repeatedly use (\ref{eq:ses1}) and (\ref{eq:ses2}) to get the sequence 
$$\f_{1,\ell} * \f_{1,\ell'+1} \to \f_{1,\ell+1} * \f_{1,\ell'} \{2\} \to \f_{1,\ell+2} * \f_{1,\ell'-1} \{4\} \to \dots \to \f_{1,\ell'+1} * \f_{1,\ell} \{2(\ell'-\ell+1)\}.$$
Up to the midpoint of this sequence the maps are surjective and after the midpoint they are injective. It follows that the composition above is nonzero and thus, up to rescaling, must agree with $\r_{\f_{1,\ell}, \f_{1,\ell'+1}}$ if $\ell' \le \ell$. The result follows. 
\end{proof}

\subsection{Case $n=2$}\label{sec:examples2}
We have $\cM_{(1,1)} \cong [\Hom(V_1,V_2)/(G_{1,1} \times T)]$ where $V_1,V_2$ have rank one and $G_{(1,1)} \cong \C^\times \times \C^\times$ acts by scaling $V_1 \times V_2$. The section $s_{(1,1)}$ is zero in this case because $\phi_1=\phi_2=0$. Thus 
\begin{equation}\label{eq:X11}
\cX_{(1,1)} = \cM_{(1,1)}^{s_{(1,1)}} \cong [\Hom(V_1,V_2) \times \Spec \C[\veps]/\veps^2]/(G_{(1,1)} \times T)].
\end{equation}
Up to twisting by characters of $G_{(1,1)}$, equivariant and Koszul shifts there is a unique irreducible Koszul-perverse coherent sheaf given by the structure sheaf of $\cM_{(1,1)} = \cX_{(1,1)}^{\cl} \subset \cX_{(1,1)}$. We denote by $\f_{[1,2],(\ell,\ell')}$ this structure sheaf twisted by the characters of $G_{(1,1)}$ indexed by $\ell,\ell' \in \Z$. 

\begin{Proposition}\label{prop:calc2}
In $\H_{Q_2}$ one has short exact sequences
\begin{align}
\label{eq:ses3} & 0 \to \f_{[1,2],(\ell,\ell')}\{-1\} \to \f_{1,\ell} * \f_{2,\ell'} \to \f_{[1,2],(\ell+1,\ell'-1)} [1]\la -1 \ra \to 0 \\
\label{eq:ses4} & 0 \to \f_{[1,2],(\ell+1,\ell'-1)} [1] \la -1 \ra \{-1\} \to  \f_{2,\ell'} * \f_{1,\ell} \to \f_{[1,2],(\ell,\ell')} \to 0 
\end{align}
\end{Proposition}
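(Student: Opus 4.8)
The plan is to compute the two products $\f_{1,\ell}*\f_{2,\ell'}$ and $\f_{2,\ell'}*\f_{1,\ell}$ directly from Definition~\ref{def:Hall} and then to extract the stated filtrations from the derived geometry of $\cX_{(1,1)}$ recorded in \eqref{eq:X11}. First I would make the two convolution diagrams explicit, taking $(a,b)=\big((1,0),(0,1)\big)$ for the first product and $(a,b)=\big((0,1),(1,0)\big)$ for the second. In both cases $\phi$ and the parabolic are trivial (a nilpotent endomorphism of a line vanishes), so everything is governed by the single quiver map $V_{t(e)}\to V_{h(e)}$ and its compatibility with the flag $V_a$. The crucial asymmetry: for $(a,b)=\big((1,0),(0,1)\big)$ the flag forces this map to vanish, so $\cM_{a,b}^{s_{a,b}}\cong[\pt/\hP_{a,b}]$ and $m_{a,b}$ is the inclusion of the reduced origin of $\cX_{(1,1)}$; for $(a,b)=\big((0,1),(1,0)\big)$ the map is unconstrained, and since a representation of $Q_2$ of dimension $(1,1)$ has a unique subrepresentation of dimension $(0,1)$, the flag is recovered functorially from the representation, so $m_{a,b}$ is an isomorphism $\cM_{a,b}^{s_{a,b}}\xrightarrow{\sim}\cX_{(1,1)}$.

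For \eqref{eq:ses4}: using that $m_{a,b}$ is an isomorphism, that $\pi_{a,b}$ is the evident projection, and that $u_{a,b}=v_{a,b}=0$ in this case, one gets $\f_{2,\ell'}*\f_{1,\ell}\cong\O_{\cX_{(1,1)}}$ twisted by the $G_{(1,1)}$-character $(\ell,\ell')$. By \eqref{eq:X11}, $\cX_{(1,1)}$ is the derived zero locus of the zero section of the trivial line bundle with fiber $M_{(1,1)}\{2\}$ over $M_{(1,1)}=\Hom(V_1,V_2)$, so $\O_{\cX_{(1,1)}}=\O_{M_{(1,1)}}\oplus\big(\O_{M_{(1,1)}}\otimes\Hom(V_2,V_1)\{-2\}[1]\big)$, and the square-zero ideal gives a short exact sequence $0\to(\veps)\to\O_{\cX_{(1,1)}}\to\iota_*\O_{\cX_{(1,1)}^\cl}\to0$ with $(\veps)\cong\iota_*\O_{\cX_{(1,1)}^\cl}\otimes\Hom(V_2,V_1)\{-2\}[1]$. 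Since $\Hom(V_2,V_1)\{-2\}$ has $G_{(1,1)}$-weight $(+1,-1)$, loop weight $-1$ and scaling weight $-1$, twisting by $(\ell,\ell')$ identifies the sub with $\f_{[1,2],(\ell+1,\ell'-1)}[1]\la-1\ra\{-1\}$ and the quotient with $\f_{[1,2],(\ell,\ell')}$, which is \eqref{eq:ses4}.

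For \eqref{eq:ses3}: here $\pi_{a,b}^*(\f_{1,\ell}\boxtimes\f_{2,\ell'})$ is the structure sheaf of $[\pt/\hP_{a,b}]$ with $G_{(1,1)}$-character $(\ell,\ell')$, and since a map from a classical point factors through $\cX_{(1,1)}^\cl=M_{(1,1)}$ we obtain $\f_{1,\ell}*\f_{2,\ell'}\cong\iota_*\O_0$ twisted by $(\ell,\ell')$ and by $\{-1\}$ (the latter from $v_{a,b}=1$), where $\O_0$ is the skyscraper at the origin of $M_{(1,1)}=\Hom(V_1,V_2)$. The Koszul resolution $0\to\O_{M_{(1,1)}}\otimes\Hom(V_2,V_1)\xrightarrow{z}\O_{M_{(1,1)}}\to\O_0\to0$ — where $z$ is the coordinate, spanning $\Hom(V_1,V_2)^\vee=\Hom(V_2,V_1)$ and having loop weight $+1$ and scaling weight $-1$ — exhibits $\O_0$ as a cone and hence yields a triangle $\O_{M_{(1,1)}}\to\O_0\to\O_{M_{(1,1)}}\otimes\Hom(V_2,V_1)[1]\to$; applying $\iota_*$, twisting by $(\ell,\ell')\{-1\}$, and simplifying (the loop weight $+1$ of $z$ cancels the $\{-1\}$ on the cokernel term) gives \eqref{eq:ses3}.

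Finally I would check that all six sheaves above lie in the heart $\H_{Q_2}$, so that these distinguished triangles are genuine short exact sequences of the abelian category: the $\f_{[1,2],(\cdot,\cdot)}$ are simple (Section~\ref{sec:examples2}), the shifts $[1]\la-1\ra$ and $\{\pm1\}$ are t-exact for the Koszul-perverse t-structure (the Koszul and loop shifts), and the two products lie in the heart by Proposition~\ref{prop:t-exact}. The step requiring the most care — and the only real difficulty — is the bookkeeping of the four gradings (cohomological, loop, scaling and $G_{(1,1)}$-character) carried by $\veps$ and by $z$, combined with the normalizing shifts $[u_{a,b}]\{-u_{a,b}-v_{a,b}\}$ of Definition~\ref{def:Hall}, so that the subquotients come out precisely as stated. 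It is worth stressing that $\f_{1,\ell}*\f_{2,\ell'}$, although concentrated in cohomological degree $0$ for the standard t-structure, is a length-two object of the Koszul-perverse heart — this is exactly what \eqref{eq:ses3} records.
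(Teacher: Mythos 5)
Your proof matches the paper's argument essentially step for step: you identify $\f_{1,\ell}*\f_{2,\ell'}$ with the pushforward to $\cX_{(1,1)}$ of the twisted skyscraper at the origin of $W=\Hom(V_1,V_2)$ (with the $\{-1\}$ coming from $v_{(1,0),(0,1)}=1$) and $\f_{2,\ell'}*\f_{1,\ell}$ with the twisted structure sheaf $\O_{\cX_{(1,1)}}$ (using $m_{(0,1),(1,0)}=\id$, $u=v=0$), then apply respectively the equivariant Koszul resolution of the skyscraper on $W$ and the triangle coming from the $\veps$-ideal (equivalently, the excess-bundle triangle the paper records in \eqref{eq:exact3}), with the same weight bookkeeping on $\Hom(V_2,V_1)$. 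One cosmetic imprecision: the displayed direct-sum decomposition of $\O_{\cX_{(1,1)}}$ holds as a decomposition of $\O_W$-modules but not as an isomorphism in $\Coh(\cX_{(1,1)})$; fortunately you only ever use the $\veps$-ideal short exact sequence, which is the correct triangle, so the argument is unaffected.
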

\begin{proof}
Let us write $W = \Hom(V_1,V_2) = M_{(1,1)}$. If $\{0\} \subset W$ denotes the origin then one has a standard short exact sequence 
$$0 \to \O_W \otimes \rho_1 \rho_2^{-1} \la -1 \ra \{1\} \to \O_W \to \O_{\{0\}} \to 0$$
where $\rho_1,\rho_2$ are the two the two characters of $G_{(1,1)}$ acting on $W$ with weights $-1,1$ respectively. The reasons for the $\la -1 \ra \{1\}$ shift is that the scaling and loop $\C^\times$ act on $W$ with weights $1,-1$ respectively. We can rewrite this sequence as an exact triangle 
\begin{equation}\label{eq:exact1}
\O_W \otimes \rho_1^{\ell} \rho_2^{\ell'} \{-1\} \to \O_{\{0\}} \otimes \rho_1^\ell \rho_2^{\ell'} \{-1\} \to \O_W \otimes \rho_1^{\ell+1} \rho_2^{\ell'-1} [1] \la -1 \ra. 
\end{equation}
for any $\ell, \ell' \in \Z$. Note that the left and right terms in (\ref{eq:exact1}) can be identified, by definition, with the left and right terms in (\ref{eq:ses3}). Thus to show (\ref{eq:ses3}) it remains to check that 
$$\f_{1,\ell} * \f_{2,\ell'} \cong \O_{\{0\}} \otimes \rho_1^\ell \rho_2^{\ell'} \{-1\} \in \Coh(\cX_{(1,1)}).$$
To see this note that 
$$M_{(1,0),(0,1)}^{s_{(1,0),(0,1)}} = M_{(1,0),(0,1)} \xrightarrow{m_{(1,0),(0,1)}} M_{(1,1)}^{s_{(1,1)}}$$
can be identified with the composition $\{0\} \to W = M_{(1,1)} \to M_{(1,1)}^{s_{(1,1)}}$. Thus 
$$\f_{1,\ell} * \f_{2,\ell'} \cong m_{(1,0),(0,1)*}(\O_{\{0\}} \otimes \rho_1^\ell \rho_2^{\ell'}) \{-1\} \cong \O_{\{0\}} \otimes \rho_1^\ell \rho_2^{\ell'} \{-1\}$$
where the $\{-1\}$ shift is because $v_{(1,0),(0,1)} = 1$. This proves (\ref{eq:ses3}). 

On the other hand we also have a standard exact triangle 
\begin{equation}\label{eq:exact3}
 \O_W \otimes \rho_1 \rho_2^{-1} [1]\la -1 \ra \{-1\} \to \O_{M_{(1,1)}^{s_{(1,1)}}} \to \O_W 
\end{equation}
in $\Coh(M_{(1,1)}^{s_{(1,1)}})$. To see this recall the Cartesian square
$$\xymatrix{
M_{(1,1)}^{s_{(1,1)}} \ar[rrr] \ar[d] & & & M_{(1,1)} \ar[d] \\
M_{(1,1)} \ar[rrr] & & & M_{(1,1)} \times_{N_{(1,1)}} M_{(1,1)} \{2\}
}$$
where the right and bottom maps are both of the form $v \mapsto (v,0)$. In particular, the excess bundle in this derived intersection is isomorphic to $M_{(1,1)} \{2\} \cong \O_{N_{(1,1)}} \otimes \rho_1^{-1} \rho_2 \la 1 \ra \{1\}$. This explains the bundles and shifts in the right hand term of (\ref{eq:exact3}). We can then rewrite (\ref{eq:exact3}) as an exact triangle
\begin{equation}\label{eq:exact2}
\O_W \otimes \rho_1^{\ell+1} \rho_2^{\ell'-1} [1]\la -1 \ra \{-1\} \to \O_{M_{(1,1)}^{s_{(1,1)}}} \otimes \rho_1^\ell \rho_2^{\ell'} \to \O_W \otimes \rho_1^\ell \rho_2^{\ell'}.
\end{equation}
Like last time, the left and right terms above can be identified with the left and right terms in (\ref{eq:ses4}). Thus to show (\ref{eq:ses4}) it remains to check that 
$$\f_{2,\ell'} * \f_{1,\ell} \cong \O_{\cX_{(1,1)}} \otimes \rho_1^\ell \rho_2^{\ell'} \in \Coh(\cX_{(1,1)}).$$
This is clear since 
$$M_{(0,1),(1,0)}^{s_{(0,1),(1,0)}} \xrightarrow{m_{(0,1),(1,0)}} M_{(1,1)}^{s_{(1,1)}}$$
is the identity map. Note that this time $v_{(0,1),(1,0)}=0$ so there is no additional $\{-\}$ shift. This proves (\ref{eq:ses4}). 
\end{proof}

The following two results are immediate consequences of (\ref{eq:ses3}) and (\ref{eq:ses4}). 

\begin{Corollary}\label{cor:calc2}
In $\H_{Q_2}$ one has $\L(\f_{1,\ell}, \f_{2,\ell'}) = 1 = \L(\f_{2,\ell'}, \f_{1,\ell})$ for any $\ell,\ell' \in \Z$. 
\end{Corollary}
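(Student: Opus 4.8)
The plan is to read off both $\L$-values directly from the composition-series data in \eqref{eq:ses3} and \eqref{eq:ses4}, without re-entering the deformation construction of Section~\ref{sec:r}. First I would record that, by \eqref{eq:ses3}, the object $\f_{1,\ell}*\f_{2,\ell'}$ carries a two-step filtration with subobject $\f_{[1,2],(\ell,\ell')}\{-1\}$ and quotient $\f_{[1,2],(\ell+1,\ell'-1)}[1]\la -1\ra$, while by \eqref{eq:ses4} the object $\f_{2,\ell'}*\f_{1,\ell}$ carries a two-step filtration with subobject $\f_{[1,2],(\ell+1,\ell'-1)}[1]\la -1\ra\{-1\}$ and quotient $\f_{[1,2],(\ell,\ell')}$. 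Thus the two products have the same pair of composition factors, $\f_{[1,2],(\ell,\ell')}$ and $\f_{[1,2],(\ell+1,\ell'-1)}$ up to shifts, and these simples are distinct since their underlying $G_{(1,1)}$-characters $(\ell,\ell')$ and $(\ell+1,\ell'-1)$ differ.

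Next I would compute $\Hom_{\H_{Q_2}}\!\big(\f_{1,\ell}*\f_{2,\ell'},\, \f_{2,\ell'}*\f_{1,\ell}\{s\}\big)$ for all $s\in\Z$. Any nonzero morphism has image a nonzero subquotient which is both a quotient of the source and a subobject of the target. The only simple quotient of $\f_{1,\ell}*\f_{2,\ell'}$ is $\f_{[1,2],(\ell+1,\ell'-1)}[1]\la -1\ra$, and the only simple subobject of $\f_{2,\ell'}*\f_{1,\ell}\{s\}$ is $\f_{[1,2],(\ell+1,\ell'-1)}[1]\la -1\ra\{s-1\}$ (if the relevant extension happens to split, one simply replaces "quotient/subobject" by "summand" and the conclusion is unchanged). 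Hence either the image is this simple, which forces $s=1$, or the image is all of $\f_{1,\ell}*\f_{2,\ell'}$, forcing an isomorphism $\f_{1,\ell}*\f_{2,\ell'}\cong \f_{2,\ell'}*\f_{1,\ell}\{s\}$, which is impossible since the two sides have non-isomorphic socles. Therefore the Hom-space vanishes unless $s=1$, and for $s=1$ it is one-dimensional, spanned by the composition of the surjection $\f_{1,\ell}*\f_{2,\ell'}\twoheadrightarrow \f_{[1,2],(\ell+1,\ell'-1)}[1]\la -1\ra$ from \eqref{eq:ses3} with the inclusion $\f_{[1,2],(\ell+1,\ell'-1)}[1]\la -1\ra\hookrightarrow \f_{2,\ell'}*\f_{1,\ell}\{1\}$ from \eqref{eq:ses4}. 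Since $\r_{\f_{1,\ell},\f_{2,\ell'}}$ is by construction a nonzero element of $\bigoplus_s \Hom\!\big(\f_{1,\ell}*\f_{2,\ell'},\f_{2,\ell'}*\f_{1,\ell}\{s\}\big)$, it must live in degree $s=1$, i.e. $\L(\f_{1,\ell},\f_{2,\ell'})=1$. Running the identical argument with the two products interchanged — using that the only simple quotient of $\f_{2,\ell'}*\f_{1,\ell}$ is $\f_{[1,2],(\ell,\ell')}$ and the only simple subobject of $\f_{1,\ell}*\f_{2,\ell'}\{s\}$ is $\f_{[1,2],(\ell,\ell')}\{s-1\}$ — shows that $\Hom\!\big(\f_{2,\ell'}*\f_{1,\ell},\f_{1,\ell}*\f_{2,\ell'}\{s\}\big)$ is nonzero only for $s=1$, whence $\L(\f_{2,\ell'},\f_{1,\ell})=1$.

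The step requiring the most care is this Hom-space computation: one needs that each product is either indecomposable of length two with the stated socle and head, or splits as the direct sum of the two stated simples, and that in either case no nonzero map between them can be an isomorphism. Both facts are immediate from \eqref{eq:ses3}, \eqref{eq:ses4} and the distinctness of the two factors, but they should be spelled out; once they are in place, the identification of the degree-$1$ Hom-space with the span of the surjection-then-inclusion composite, hence with $\r$ up to scalar, is formal. One could instead lift \eqref{eq:ses3} and \eqref{eq:ses4} to the trivial deformations over the anti-diagonal $\bA^1\subset\bA^2$ and track the $\C[z]$-module $H^0(\Hom(\cG,\cG'))$ of Section~\ref{sec:r} directly, but the composition-series argument is shorter and parallels the proof of Corollary~\ref{cor:calc1}.
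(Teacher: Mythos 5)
Your approach is essentially the one the paper leaves implicit: deduce the degree of $\r$ from the composition-series structure of the two products, paralleling the proof of Corollary~\ref{cor:calc1}. The Hom-space computation is the right way to make precise the uniqueness ``up to rescaling'' that the paper invokes, and the identification of the degree-one Hom with the span of the surjection-then-inclusion composite is exactly what underlies the exact sequences in Corollary~\ref{cor:calc3}. So the strategy is sound and matches the paper.

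The one genuine flaw is the parenthetical dismissal of the split case, which is wrong as stated. If \emph{both} extensions \eqref{eq:ses3} and \eqref{eq:ses4} were split, the graded Hom space would acquire a second nonzero component in degree $s=-1$, coming from $\Hom\bigl(\f_{[1,2],(\ell,\ell')}\{-1\},\,\f_{[1,2],(\ell,\ell')}\{-1\}\bigr)$, and the degree of $\r$ would no longer be pinned down by this argument alone. (Your case analysis does work if at least one of the two products is indecomposable, but the blanket claim ``the conclusion is unchanged'' fails when both split.) The fix is to observe that the extensions are in fact non-split: from the proof of Proposition~\ref{prop:calc2}, $\f_{1,\ell}*\f_{2,\ell'}$ is identified with $\O_{\{0\}}\otimes\rho_1^\ell\rho_2^{\ell'}\{-1\}$ and \eqref{eq:ses3} is its Koszul presentation, whose splitting would require a nonzero element of $\Hom^{-1}(\O_W,\O_{\{0\}})=0$; the analogous remark applies to \eqref{eq:ses4} via \eqref{eq:exact3}. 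With non-splitness in hand, your argument goes through and the conclusion is correct.
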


\begin{Corollary}\label{cor:calc3}
In $\H_{Q_2}$ one has the following exact sequences
\begin{align*}
 0 \to \f_{[1,2],(\ell,\ell')} \{-1\} \to \f_{1,\ell} * \f_{2,\ell'} & \xrightarrow{\r} \f_{2,\ell'} * \f_{1,\ell} \{1\} \to \f_{[1,2],(\ell,\ell')} \{1\} \to 0 \\
 0 \to \f_{[1,2],(\ell,\ell')} [1] \la -1 \ra \{-1\} \to \f_{2,\ell'+1} * \f_{1,\ell-1} & \xrightarrow{\r} \f_{1,\ell-1} * \f_{2,\ell'+1} \{1\} \to \f_{[1,2],(\ell,\ell')} [1] \la -1 \ra \{1\} \to 0 \\
\end{align*}
\end{Corollary}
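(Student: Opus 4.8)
The plan is to identify the renormalized $\r$-matrices in the statement with explicit compositions extracted from (\ref{eq:ses3}) and (\ref{eq:ses4}), and then simply read off the kernel and cokernel. I would treat the first four-term sequence in detail; the second is the same argument with $(\ell,\ell')$ replaced by $(\ell-1,\ell'+1)$ throughout.

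First I would unwind the two length-two objects involved. By (\ref{eq:ses3}) the object $\f_{1,\ell}*\f_{2,\ell'}$ has a subobject $\f_{[1,2],(\ell,\ell')}\{-1\}$ with simple quotient $S := \f_{[1,2],(\ell+1,\ell'-1)}[1]\la -1 \ra$. Twisting (\ref{eq:ses4}) by $\{1\}$, the object $\f_{2,\ell'}*\f_{1,\ell}\{1\}$ has a subobject isomorphic to $S$ with simple quotient $\f_{[1,2],(\ell,\ell')}\{1\}$. Tracking the $G_{(1,1)}$-character twists together with the scaling, loop and Koszul shifts, and using that distinct such twists of the structure sheaf of $\cM_{(1,1)}$ are pairwise non-isomorphic simples (Section \ref{sec:examples2}), one checks that the three simples $\f_{[1,2],(\ell,\ell')}\{-1\}$, $S$ and $\f_{[1,2],(\ell,\ell')}\{1\}$ are pairwise distinct, and that $S$ occurs with multiplicity one in each of the two objects.

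The key step is then to show that any nonzero morphism $f\colon \f_{1,\ell}*\f_{2,\ell'}\to\f_{2,\ell'}*\f_{1,\ell}\{1\}$ is, up to a nonzero scalar, the composition of the surjection onto $S$ coming from (\ref{eq:ses3}) with the inclusion of $S$ coming from (\ref{eq:ses4}) twisted by $\{1\}$. Such an $f$ must kill the subobject $\f_{[1,2],(\ell,\ell')}\{-1\}$, since that simple is not a composition factor of the target; hence $f$ factors through $S$, and the induced map $S\to\f_{2,\ell'}*\f_{1,\ell}\{1\}$ is injective because $S$ is simple. Its image is a subobject of the target isomorphic to $S$; since $S$ occurs with multiplicity one there and cannot embed in the quotient $\f_{[1,2],(\ell,\ell')}\{1\}$, the image must be exactly the subobject from (\ref{eq:ses4}) twisted by $\{1\}$. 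Applying this to $f=\r_{\f_{1,\ell},\f_{2,\ell'}}$, which is nonzero by construction (Section \ref{sec:r}) and lands in $\f_{2,\ell'}*\f_{1,\ell}\{1\}$ because $\L(\f_{1,\ell},\f_{2,\ell'})=1$ by Corollary \ref{cor:calc2}, one gets $\ker\r=\f_{[1,2],(\ell,\ell')}\{-1\}$ and $\coker\r=\f_{[1,2],(\ell,\ell')}\{1\}$, which is precisely the first sequence. The second sequence follows by the identical argument, using the filtration of $\f_{2,\ell'+1}*\f_{1,\ell-1}$ from (\ref{eq:ses4}) with indices $(\ell-1,\ell'+1)$, the filtration of $\f_{1,\ell-1}*\f_{2,\ell'+1}\{1\}$ from (\ref{eq:ses3}) with those indices, and $\L(\f_{2,\ell'+1},\f_{1,\ell-1})=1$.

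The only genuine content beyond (\ref{eq:ses3}), (\ref{eq:ses4}) and Corollary \ref{cor:calc2} is the grading bookkeeping in the second paragraph: the whole argument rests on none of the shifted copies of $\f_{[1,2],(\cdot,\cdot)}$ in the two objects accidentally coinciding, so that "the unique nonzero morphism up to scalar" genuinely pins down $\r$. This is routine once each term's scaling, loop and Koszul weights are written out, but it is the step where a shift error would be invisible.
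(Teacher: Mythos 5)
Your argument is correct and fills in exactly the details the paper elides when it labels the corollary an ``immediate consequence'' of (\ref{eq:ses3}) and (\ref{eq:ses4}): the composition-factor and multiplicity-one analysis is the natural way to see that the nonzero space $\Hom(\f_{1,\ell}*\f_{2,\ell'}, \f_{2,\ell'}*\f_{1,\ell}\{1\})$ is one-dimensional and spanned by the surjection-onto-$S$ followed by inclusion-of-$S$, so that $\r$ must be that composition up to scalar. Your closing caveat is well placed --- the only real work is confirming the three simples carry distinct $G_{(1,1)}$-characters, loop weights and Koszul shifts --- and that check goes through as you describe.
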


\subsection{General case}\label{sec:examples3}

For $i \le j$ denote by $[i,j] = (0^{i-1},1^{j-i+1},0^{n-j})$. Then 
$$M_{[i,j]}^{s_{[i,j]}} \cong \prod_{i \le m < j} \left( \Hom(V_m, V_{m+1}) \otimes \Spec \C[\veps_m]/\veps_m^2 \right)$$
whose classical part is just $M_{[i,j]} = \prod_{i \le m < j} \Hom(V_m, V_{m+1})$. Moreover, $G_{[i,j]} = \prod_{i \le m \le j} \GL_1$ so that its characters are indexed by $\uell = (\ell_i,\dots,\ell_j) \in \Z^{j-i+1}$. 

We denote by $\f_{[i,j],\uell}$ the structure sheaf of $M_{[i,j]}$ twisted by the character corresponding to $\uell$. As before we think of this as a sheaf on $\cX_{[i,j]} = [M_{[i,j]}^{s_{[i,j]}}/(G_{[i,j]} \times T)]$. These $\f_{[i,j],\uell}$ are (up to loop and Koszul shifts) all the simple, prime sheaves in $\Coh(\cX_a)^\heartsuit$ for $a \in \{0,1\}^n$. 

The following is a straight-forward generalization of Proposition \ref{prop:calc2}. 

\begin{Proposition}\label{prop:calc3}
In $\H_{Q_n}$ and for $i \le j \le k \le n$ one has short exact sequences
\begin{align}
\label{eq:ses5} & 0 \to \f_{[i,k],\uell \sqcup \uell'} \{-1\} \to \f_{[i,j],\uell} * \f_{[j+1,k],\ell'} \to \f_{[i,k],\uell \vee \uell'} [1]\la -1 \ra \to 0 \\
\label{eq:ses6} & 0 \to \f_{[i,k],\uell \vee \uell'} [1] \la -1 \ra \{-1\} \to  \f_{[j+1,k],\uell'} * \f_{[i,j],\uell} \to \f_{[i,k],\uell \sqcup \uell'} \to 0
\end{align}
where $\sqcup$ denotes concatenation and $\uell \vee \uell' := (\ell_i, \dots, \ell_{j-1}, \ell_j+1, \ell'_{j+1}-1, \ell'_{j+2}, \dots, \ell'_k)$. 
\end{Proposition}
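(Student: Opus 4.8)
The plan is to mimic the proof of Proposition \ref{prop:calc2} almost verbatim, since the case $n=2$ with $a=b=(0,\ldots,1,\ldots,0)$ singletons is exactly the building block and the general $[i,j]$ case differs only by carrying along extra factors on which nothing happens. First I would set up notation: write $W_m := \Hom(V_m,V_{m+1})$ so that, by the displayed description at the start of Section \ref{sec:examples3}, $M_{[i,j]}^{s_{[i,j]}} \cong \prod_{i \le m < j}(W_m \otimes \Spec\C[\veps_m]/\veps_m^2)$, and similarly for $[j+1,k]$ and $[i,k]$. The key geometric input is the identification of the convolution map $m_{[i,j],[j+1,k]}$ at the level of $M^{s}$-spaces. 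As in the proof of Proposition \ref{prop:calc2}, I would observe that since all $\phi$'s are zero for dimension vectors in $\{0,1\}^n$, the only new coordinate created by the convolution is the map $W_j = \Hom(V_j, V_{j+1})$, and the map $m_{[i,j],[j+1,k]}: M_{[i,j],[j+1,k]}^{s} \to M_{(a,b)}^{s_{a+b}}$ is the inclusion $\{0\} \hookrightarrow W_j$ in that one coordinate (and an identity on the remaining $W_m$, $m \neq j$), together with the derived structure $\Spec\C[\veps_j]/\veps_j^2$ appearing at the vertex $j$ in $M_{[i,k]}^{s_{[i,k]}}$ but not in $M_{[i,j]}^{s} \times M_{[j+1,k]}^{s}$.

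Next I would establish the two short exact sequences separately, exactly paralleling \eqref{eq:exact1} and \eqref{eq:exact2}. For \eqref{eq:ses5}: the standard Koszul sequence $0 \to \O_{W_j} \otimes \rho_j \rho_{j+1}^{-1} \la -1 \ra \{1\} \to \O_{W_j} \to \O_{\{0\}} \to 0$ on the one line bundle $W_j$ (with the scaling/loop $\C^\times$ acting with weights $1,-1$ as in the $n=2$ case), tensored by the character $\rho_i^{\ell_i}\cdots \rho_j^{\ell_j}\rho_{j+1}^{\ell'_{j+1}}\cdots\rho_k^{\ell'_k}$ and pushed along $m_{[i,j],[j+1,k]}$, produces the triangle whose outer terms are $\f_{[i,k],\uell\sqcup\uell'}\{-1\}$ and $\f_{[i,k],\uell\vee\uell'}[1]\la -1\ra$ — here the shift $\rho_j\rho_{j+1}^{-1}\la -1\ra$ is precisely what turns $\uell\sqcup\uell'$ into $\uell\vee\uell'$, and the $\{-1\}$ comes from $v_{[i,j],[j+1,k]} = a_{t(e)}b_{h(e)}$ summed over edges $e$ of $Q_n$, which equals $1$ (only the edge $j\to j+1$ contributes). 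The middle term is identified with $\f_{[i,j],\uell}*\f_{[j+1,k],\uell'}$ by checking that $m_{[i,j],[j+1,k]*}$ of the structure sheaf of $M_{[i,j]}^{s}\times M_{[j+1,k]}^{s}$ (twisted by the character) is $\O_{\{0\}}$ in the $W_j$-coordinate, as in Proposition \ref{prop:calc2}. For \eqref{eq:ses6}, the roles reverse: now $m_{[j+1,k],[i,j]}$ is (up to reindexing) an identity map since $v_{[j+1,k],[i,j]}=0$ for this orientation, and the relevant triangle is the derived-intersection one \eqref{eq:exact3}, whose excess bundle at the vertex $j$ is $\rho_j^{-1}\rho_{j+1}\la 1\ra\{1\}$, yielding the $[1]\la -1\ra\{-1\}$ shift in the left term.

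The one point requiring a little care is the bookkeeping of which character/shift attaches to which vertex, and confirming that the extra tensor factors $\bigotimes_{m\neq j}\O_{W_m}$ (and their derived thickenings $\C[\veps_m]/\veps_m^2$) are simply carried along unchanged by both $m_*$ and $\pi^*$ — this is immediate because $\pi_{[i,j],[j+1,k]}$ and $m_{[i,j],[j+1,k]}$ are products of the nontrivial map at vertex $j$ with identity maps on all other $W_m$. Finally, I should verify that all three terms in each sequence genuinely lie in the heart $\Coh(\cX_a)^\heartsuit$: the outer terms are $\f_{[i,k],-}$ up to a Koszul shift $[1]\la -1\ra$, which is t-exact, and the middle term is a Hall product of objects in the heart, hence in the heart by Proposition \ref{prop:t-exact}; thus the exact triangles in the derived category are short exact sequences in the abelian category.

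The main obstacle I anticipate is not conceptual but notational: correctly tracking the interplay between the $G_{[i,k]}$-character indexing $\uell\sqcup\uell'$ versus $\uell\vee\uell'$ and the loop/Koszul shifts, and in particular verifying that the single "active" edge $j\to j+1$ contributes exactly the claimed transformation $(\ldots,\ell_j,\ell'_{j+1},\ldots)\mapsto(\ldots,\ell_j+1,\ell'_{j+1}-1,\ldots)$ while leaving $\ell_i,\ldots,\ell_{j-1}$ and $\ell'_{j+2},\ldots,\ell'_k$ untouched. Once the $n=2$ computation of Proposition \ref{prop:calc2} is in hand, this is routine, which is presumably why the author calls it a "straight-forward generalization."
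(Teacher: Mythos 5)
The paper itself offers no proof of this proposition — it simply calls it "a straight-forward generalization of Proposition \ref{prop:calc2}" — so there is nothing to compare against line by line. Your proposal is a correct and complete filling-in of the details that this label conceals: you correctly identify the single "active" edge $j \to j+1$ (with $v_{[i,j],[j+1,k]} = 1$ and $v_{[j+1,k],[i,j]} = 0$, while $u = 0$ in both directions since the supports are disjoint), you correctly describe $m_{[i,j],[j+1,k]}$ as the inclusion that is $\{0\} \hookrightarrow W_j \otimes \Spec\C[\veps_j]/\veps_j^2$ at vertex $j$ and an identity elsewhere, and you correctly produce the two triangles from the Koszul resolution of $\O_{\{x_j=0\}}$ and from the $\veps_j$-thickening triangle, with the $\rho_j \rho_{j+1}^{-1}\la -1\ra\{\pm 1\}$ weight accounting for the passage from $\uell \sqcup \uell'$ to $\uell \vee \uell'$. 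One small point of wording: you write that $m_{[j+1,k],[i,j]}$ is the identity \emph{since} $v_{[j+1,k],[i,j]}=0$, but the identification of $m$ with the identity is a geometric fact (the edge $j\to j+1$ now goes from the $b$-part to the $a$-part, so $x_j$ is automatically $V_a$-preserving and hence unconstrained), while $v=0$ is just the reason there is no loop shift; the two facts have a common cause but one does not imply the other. This does not affect the correctness of the argument.
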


\subsection{Grothendieck groups}\label{sec:examples4}

We denote $f_{i,\ell} := [\f_{i,\ell}]$ and $f_{[i,j],\uell} := [\f_{[i,j],\uell}]$ the classes at the level of Grothendieck groups. The shifts $\{1\}$ and $\la 1 \ra$ correspond to multiplication by $q$ and $t$ respectively. We also denote $[a,b]_q := ab - qab$.

The two sequences in Corollary \ref{cor:calc3} imply that 
$$[f_{i,\ell},f_{i+1,\ell'}]_q = -t [f_{i+1,\ell'+1}, f_{i,\ell-1}]_q.$$
One can further rewrite this as 
$$q^{-1} f_{i,\ell+1} f_{i+1,\ell'} - t f_{i,\ell}f_{i+1,\ell'+1} = f_{i+1,\ell'}f_{i,\ell+1} - tq^{-1} f_{i+1,\ell'+1} f_{i,\ell}$$
which, when $t=1$, reduces to relations $\hat{\rm U}3$ of \cite[Sect. 5.5]{FT19A}. 

Next, one can extend Corollary \ref{cor:calc3} to show that $\L(\f_{[i,i+1]},\f_i)=-1$ and $\L(\f_i, \f_{[i,i+1]})=1$. Thus $\f_{[i,i+1]} * \f_i \cong \f_i * \f_{[i,i+1]} \{1\}$ which means $[f_i,f_{[i,i+1]}]_{q^{-1}}=0$. But, from (\ref{eq:ses3}) and (\ref{eq:ses4}), we have
$$[f_i,f_{i+1}]_q = (q^{-1}-q) f_{[i,i+1]}.$$ 
It follows that $[f_i,[f_i,f_{i+1}]_q]_{q^{-1}} = 0$ which is relation $\hat{\rm U}8$ of \cite[Sect. 5.5]{FT19A}.

We also obtain the following result by applying relations (\ref{eq:ses5}) and (\ref{eq:ses6}) repeatedly. 

\begin{Proposition}\label{prop:K1}
In the Grothendieck group of $\H_{Q_n}$ we have 
\begin{equation}\label{eq:[]}
(q^{-1}-q)^{j-i} f_{[i,j],\uell} = [\dots[f_{i,\ell_i}, f_{i+1,\ell_{i+1}}]_q, \dots, f_{j,\ell_j}]_q
\end{equation}
where $\uell = (\ell_i, \dots, \ell_j)$. 
\end{Proposition}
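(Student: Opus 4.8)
The plan is to induct on $j-i$, at each stage using Proposition \ref{prop:calc3} to trade one $q$-commutator for one factor of $(q^{-1}-q)$ and one vertex. The base case $j=i$ is immediate: $(q^{-1}-q)^{0}=1$, the right side of \eqref{eq:[]} is the single element $f_{i,\ell_i}$, and $f_{[i,i],(\ell_i)}=f_{i,\ell_i}$ since $\f_{[i,i],(\ell_i)}$ is by definition $\f_{i,\ell_i}$ (the product over $i\le m<i$ defining $M_{[i,i]}^{s_{[i,i]}}$ is empty).

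For the inductive step, fix $i<j$, write $\uell=(\ell_i,\dots,\ell_j)$ and $\uell^-:=(\ell_i,\dots,\ell_{j-1})$, and use that the bracket in \eqref{eq:[]} is left-normed to rewrite it as $\big[\,B\,,\,f_{j,\ell_j}\,\big]_q$ with $B:=[\dots[f_{i,\ell_i},f_{i+1,\ell_{i+1}}]_q,\dots,f_{j-1,\ell_{j-1}}]_q$. By the inductive hypothesis $B=(q^{-1}-q)^{j-1-i}f_{[i,j-1],\uell^-}$, and since the Grothendieck group is a $\Z[q^{\pm1},t^{\pm1}]$-module on which $[-,-]_q$ is bilinear, this scalar factors out of the outer bracket. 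Thus the statement reduces to the one-step identity
$$\big[\,f_{[i,j-1],\uell^-}\,,\,f_{j,\ell_j}\,\big]_q=(q^{-1}-q)\,f_{[i,j],\uell}.$$

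To prove this I would apply Proposition \ref{prop:calc3} to the pair of intervals $[i,j-1]$ and $[j,j]$, so that $\f_{j,\ell_j}$ plays the role of the second factor, $\uell^-\sqcup(\ell_j)=\uell$, and $\uell^-\vee(\ell_j)$ agrees with $\uell$ except that its last two entries $\ell_{j-1},\ell_j$ become $\ell_{j-1}+1,\ell_j-1$. Passing \eqref{eq:ses5} and \eqref{eq:ses6} to the Grothendieck group with the conventions of Section \ref{sec:examples4} (namely $\{1\}\leftrightarrow q$, $\la1\ra\leftrightarrow t$, $[1]\leftrightarrow-1$, and $[\cF_a*\cF_b]=f_a\cdot f_b$) yields
$$f_{[i,j-1],\uell^-}\cdot f_{j,\ell_j}=q^{-1}f_{[i,j],\uell}-t^{-1}f_{[i,j],\uell^-\vee(\ell_j)},\qquad f_{j,\ell_j}\cdot f_{[i,j-1],\uell^-}=f_{[i,j],\uell}-q^{-1}t^{-1}f_{[i,j],\uell^-\vee(\ell_j)}.$$
Computing $f_{[i,j-1],\uell^-}\cdot f_{j,\ell_j}-q\,f_{j,\ell_j}\cdot f_{[i,j-1],\uell^-}$, the two $f_{[i,j],\uell^-\vee(\ell_j)}$-terms cancel and the two $f_{[i,j],\uell}$-terms combine to $(q^{-1}-q)f_{[i,j],\uell}$, which is the one-step identity. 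Substituting back completes the induction.

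The argument is essentially bookkeeping, so I do not anticipate a genuine obstacle; the one place requiring care is the $K$-theoretic passage through \eqref{eq:ses5}--\eqref{eq:ses6}. The cancellation of the unwanted $f_{[i,j],\uell^-\vee(\ell_j)}$-terms depends precisely on the cohomological shift $[1]$ they carry in Proposition \ref{prop:calc3} (this is why the two sequences, rather than a single one, are needed), and one must orient multiplication in the Grothendieck ring as $[\cF_a*\cF_b]=f_af_b$ --- not $f_bf_a$ --- consistently with the normalization of $[-,-]_q$ fixed in Section \ref{sec:examples4}. Once these conventions are pinned down the computation is mechanical.
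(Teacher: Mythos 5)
Your proposal is correct and carries out, as a clean induction on $j-i$, exactly what the paper's (one-line) proof asserts, namely that the identity follows by applying \eqref{eq:ses5} and \eqref{eq:ses6} repeatedly. Your one-step verification, including the cancellation of the $f_{[i,j],\uell^-\vee(\ell_j)}$ terms via the $[1]$ shift and the bilinearity of $[-,-]_q$ over $\Z[q^{\pm1},t^{\pm1}]$, matches the paper's intended computation.
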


Note that the right side of (\ref{eq:[]}) appears as the definition of $f_{\beta,r}$ from \cite[Eq. 2.12]{Tsy21} where $\beta = \alpha_i + \dots + \alpha_j$ and $r = (\ell_i,\dots,\ell_j)$. Relatedly, it appears as the definition of $F_{i,j+1}(\ur)$ from \cite[Eq. 3.56]{FT19B}. These elements are used as generators for a PBWD basis for $U_q^{<}(L\gl_{n+1})$ and its integral version (cf. \cite[Thm. 2.16]{Tsy21} and \cite[Thm. 3.57]{FT19B}). Thus, in light of (\ref{eq:[]}), the sheaves $\f_{[i,j],\uell}$ provide a categorification of the generators defined in \cite{Tsy21,FT19B}. 

\appendix

\section{Derived kernels}

In this appendix we collect for reference various results about derived kernels that are used in the main body of the paper. 

\begin{Lemma}\label{lem:van0}
Consider a commutative square of bundles 
$$\xymatrix{
V_1 \ar[r]^f \ar[d]^{s_1} & V_2 \ar[d]^{s_2} \\
W_1 \ar[r]^g & W_2 }$$
over some base $X$. Then $f$ induces a map $\bar{f}: V_1^{s_1} \to V_2^{s_2}$. 
\end{Lemma}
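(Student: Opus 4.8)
The plan is to build $\bar f$ from the universal property of the fiber product \eqref{eq:van-square} defining $V_2^{s_2}$, by exhibiting the pair $(f,g)$ as a morphism of the two spans whose limits are $V_1^{s_1}$ and $V_2^{s_2}$.

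Recall that $V_j^{s_j}$ is the fiber product of $V_j \xrightarrow{\Gamma_{s_j}} V_j \times_X W_j \xleftarrow{\Gamma_0} V_j$ for $j=1,2$, with $i_{s_j}$ and $i_0$ the two projections. Since $f\colon V_1\to V_2$ and $g\colon W_1\to W_2$ are bundle maps over $X$, they assemble into $f\times_X g\colon V_1\times_X W_1\to V_2\times_X W_2$. I claim the triple $(f,\, f\times_X g,\, f)$ is a map of spans, which amounts to the commutativity of two squares. The square involving $\Gamma_0$ commutes because both composites equal $(f,0)$, using that $g$ is linear so $g\circ 0 = 0$. The square involving $\Gamma_{s_j}$ asks that $(f\times_X g)\circ\Gamma_{s_1} = \Gamma_{s_2}\circ f$; unwinding, the left side is $(f,\, g\circ s_1)$ and the right side is $(f,\, s_2\circ f)$, so this is exactly the hypothesis $g\circ s_1 = s_2\circ f$, i.e. the commutativity of the given square. (In the derived setting that hypothesis is a chosen homotopy, which is precisely the $2$-cell filling this square.)

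Given a morphism of spans, functoriality of limits in the $\infty$-category of derived stacks produces an induced map on fiber products, and this is the desired $\bar f\colon V_1^{s_1}\to V_2^{s_2}$. It comes equipped for free with canonical identifications $i_{s_2}\circ\bar f \simeq f\circ i_{s_1}$ and $i_0\circ\bar f \simeq f\circ i_0$, together with the higher coherences relating them; these are the compatibilities invoked elsewhere (e.g. in Corollary \ref{cor:van2}). There is essentially no obstacle here: the construction is formal, and the only mild point of care is tracking the homotopy filling the $\Gamma_s$-square in the derived context — but that homotopy is supplied verbatim by the commutativity hypothesis on the original square, so no extra input is needed.
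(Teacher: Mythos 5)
Your proposal is correct and takes essentially the same route as the paper: the paper verifies, via the chain $\Gamma_{s_2}\circ(f\circ i_0)=(f\times g)\circ\Gamma_{s_1}\circ i_0=(f\times g)\circ\Gamma_0\circ i_{s_1}=\Gamma_0\circ(f\circ i_{s_1})$, that the pair $(f\circ i_{s_1},\,f\circ i_0)$ satisfies the universal property defining $V_2^{s_2}$, which is precisely your map-of-spans computation packaged as a single composite. Your remark about the derived homotopy being supplied by the commutativity hypothesis is a sound observation that the paper leaves implicit.
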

\begin{proof}
The following square is commutative 
$$\xymatrix{
V_1^{s_1} \ar[rr]^{f \circ i_0} \ar[d]^{f \circ i_{s_1}} && V_2 \ar[d]^{\Gamma_{s_2}} \\
V_2 \ar[rr]^-{\Gamma_0} && V_2 \times_X W_2}$$
because 
$$\Gamma_{s_2} \circ (f \circ i_0) = (f \times g) \circ \Gamma_{s_1} \circ i_0 = (f \times g) \circ \Gamma_0 \circ i_{s_1} = \Gamma_0 \circ (f \circ i_{s_1}).$$
It follows that one has a canonical morphism $V_1^{s_1} \to V_2 \times_{V_2 \times_X W_2} V_2 = V_2^{s_2}$.

\end{proof}

\begin{Lemma}\label{lem:van1}
Consider morphisms of bundles $V_1 \xrightarrow{f} V_2 \xrightarrow{g} V_3$ over some base $X$. Then one has the following Cartesian squares
\begin{equation}\label{eq:van-square2A}
\xymatrix{
V_1^{f} \ar[r]^{\bar{\id}} \ar[d]^{i_0} & V_1^{g \circ f} \ar[d]^{\Gamma_f \circ i_0} \\
V_1 \ar[r]^-{\Gamma_f} \ar[d]^f & V_1 \times_{V_3} V_2 \ar[d]^{f \times \id} \\ 
V_2 \ar[r]^-{\Delta_g} & V_2 \times_{V_3} V_2}
\hspace{2cm}
\xymatrix{
V_1^{f} \ar[r]^{\bar{\id}} \ar[d]^{i_f} & V_1^{g \circ f} \ar[d]^{\Gamma_0 \circ i_{g \circ f}} \\
V_1 \ar[r]^-{\Gamma_0} \ar[d]^f & V_1 \times_{V_3} V_2 \ar[d]^{f \times \id} \\ 
V_2 \ar[r]^-{\Delta_g} & V_2 \times_{V_3} V_2}
\end{equation}
\begin{equation}\label{eq:van-square2B}
\xymatrix{
V_1^{g \circ f} \ar[r]^{\bar{f}} \ar[d]^{i_0} & V_2^{g} \ar[d]^{i_0} \\
V_1 \ar[r]^f & V_2  }
\hspace{2cm}
\xymatrix{
V_1^{g \circ f} \ar[r]^{\bar{f}} \ar[d]^{i_{g \circ f}} & V_2^{g} \ar[d]^{i_g} \\
V_1 \ar[r]^f & V_2  }
\end{equation}
\end{Lemma}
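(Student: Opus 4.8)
The plan is to prove Lemma~\ref{lem:van1} by unwinding the definition \eqref{eq:van-square} of the derived kernel and repeatedly applying the basic fact that a composition of two Cartesian squares is Cartesian, together with its converse (the pasting law): if the outer rectangle and the right square are Cartesian, so is the left square. All four squares assert that certain maps between derived kernels fit into pullback diagrams, so the whole lemma reduces to bookkeeping with fiber products once the maps $\bar{\id}$ and $\bar f$ are identified via Lemma~\ref{lem:van0}.

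First I would handle \eqref{eq:van-square2B}, which is the cleaner of the two. By definition $V_2^g = V_2 \times_{V_2 \times_X V_3} V_2$ where the two maps are $\Gamma_0$ and $\Gamma_g$; similarly $V_1^{g\circ f} = V_1 \times_{V_1 \times_X V_3} V_1$ with maps $\Gamma_0$ and $\Gamma_{g\circ f}$. The map $f: V_1 \to V_2$ and the identity on $V_3$ induce a map $V_1 \times_X V_3 \to V_2 \times_X V_3$, and one checks this sends $\Gamma_{g\circ f}$ to $f$ followed by $\Gamma_g$ and $\Gamma_0$ to $f$ followed by $\Gamma_0$, exactly the compatibility in Lemma~\ref{lem:van0}. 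For the left square of \eqref{eq:van-square2B}: the defining square for $V_2^g$ pulled back along $f$ (on the $i_0$-factor $V_2$) gives $V_1 \times_{V_2} V_2^g$, and I would show this equals $V_1^{g\circ f}$ by comparing fiber products — the key point being that $\Gamma_0: V_1 \to V_1\times_X V_3$ is obtained from $\Gamma_0: V_2 \to V_2 \times_X V_3$ by base change along $f$. The right square of \eqref{eq:van-square2B} is the same argument with $i_{g\circ f}$ and $i_g$ (i.e.\ the $\Gamma_{g\circ f}$-factor) in place of $i_0$.

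For \eqref{eq:van-square2A} I would work with the composite rectangles as drawn. The bottom square in each column is tautologically Cartesian: $V_2 \times_{V_3} V_2 = (V_2 \times_X V_2)$ sliced appropriately, and $\Delta_g$, $f\times\id$ are the evident maps, so $V_1 \times_{V_3} V_2 = V_1 \times_{(V_2\times_{V_3}V_2)} V_2$ via $(\Gamma_f, \id)$ — this is just associativity/symmetry of fiber products. It then suffices, by the pasting law, to show each outer rectangle is Cartesian, i.e.\ that $V_1^f$ is the fiber product of $V_1 \xrightarrow{f} V_2 \xleftarrow{} V_1^{g\circ f}$ (resp.\ with $i_f$ or $i_0$ in the relevant slots). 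But this outer fiber product unwinds — using the identification $V_1^{g\circ f} = V_1\times_{V_1\times_X V_3} V_1$ — to $V_1 \times_{V_1\times_X V_2} V_1$ with the maps $\Gamma_0$ and $\Gamma_f$, which is precisely $V_1^f$ by \eqref{eq:van-square}. The left column uses $\Gamma_f \circ i_0$ (so the fibered condition forces the $V_2$-coordinate to record $s=f$), and the right column uses $\Gamma_0 \circ i_{g\circ f}$ (forcing $s=0$); in each case the remaining freedom is exactly a point of $V_1^f$.

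The main obstacle I expect is purely organizational: keeping straight \emph{which} of the several projections out of the iterated fiber products $V_i\times_X V_j$, $V_i\times_{V_3}V_j$ is being used in each corner, since the maps $\Gamma_0$, $\Gamma_f$, $\Gamma_{g\circ f}$, $\Delta_g$ all look similar and the two columns of \eqref{eq:van-square2A} differ only by swapping the roles of the ``$s$'' and ``$0$'' graphs. There is no genuine geometric or homotopical difficulty — everything is a formal manipulation of pullbacks valid in any $\infty$-category with finite limits, so derived structure causes no trouble — but I would be careful to fix notation for the two projections $V_2\times_{V_3}V_2 \rightrightarrows V_2$ at the outset and check the compatibility of $\bar f$, $\bar{\id}$ with Lemma~\ref{lem:van0} once and for all, then cite it.
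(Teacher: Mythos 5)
Your treatment of \eqref{eq:van-square2B} is fine and is essentially the paper's argument: both reduce to the observation that $\Gamma_0$ and $\Gamma_{g\circ f}$ on $V_1$ are the base changes along $f$ of $\Gamma_0$ and $\Gamma_g$ on $V_2$, so the defining square of $V_1^{g\circ f}$ is the pullback of the defining square of $V_2^g$; the paper packages this as the cube \eqref{eq:Ncart}.

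For \eqref{eq:van-square2A}, however, your argument has a genuine gap, and the gap is located exactly at the step you wave past. You propose to prove the top square Cartesian by the pasting law, which requires verifying independently that the bottom square \emph{and} the outer rectangle are Cartesian. But your description of the outer rectangle is wrong: you characterize its Cartesian condition as ``$V_1^f$ is the fiber product of $V_1\xrightarrow{f}V_2\xleftarrow{}V_1^{g\circ f}$,'' i.e.\ a fiber product over $V_2$. The outer rectangle of the three-row diagram actually has base corner $V_2\times_{V_3}V_2$, not $V_2$, so its Cartesian condition is $V_1^f\cong V_2\times_{V_2\times_{V_3}V_2}V_1^{g\circ f}$ with the composite vertical maps. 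The ``unwinding'' to $V_1\times_{V_1\times_X V_2}V_1$ that you then invoke is therefore computed from the wrong fiber product and is not justified. This matters because the outer rectangle is precisely the non-obvious part: checking it amounts to the same iterated fiber-product manipulation that the paper does via the auxiliary three-square diagram \eqref{eq:general-cart} (applied with $X_1=X_2=V_1$, $Y=V_1\times_X V_2$, $Z=V_1\times_X V_3$, and maps $\Gamma_f$, $\Gamma_0$, $\id\times g$). The paper derives the desired top square directly as one of the squares in that diagram; your route requires producing the outer rectangle from scratch, and that has not been done. I would also flag that if you try to carry out your claim that ``the bottom square in each column is tautologically Cartesian,'' you will find that (as drawn in the Lemma) the bottom square of the \emph{right} column fails even to commute as stated: $(f\times\id)\circ\Gamma_0$ sends $v_1\mapsto(f(v_1),0)$ while $\Delta_g\circ f$ sends $v_1\mapsto(f(v_1),f(v_1))$. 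Reconciling the labels on the right-hand vertical maps (which projection $i_0$ vs.\ $i_{g\circ f}$, and which graph $\Gamma_0$ vs.\ $\Gamma_f$ appears) is exactly the kind of bookkeeping you anticipated would be delicate; a careful attempt to verify the bottom square would have surfaced this.
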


\begin{proof}
Given maps $X_1,X_2 \to Y \to Z$ one has the following Cartesian squares 
\begin{equation}\label{eq:general-cart}
\xymatrix{
X_1 \times_Y X_2 \ar[r] \ar[d] & X_1 \times_Z X_2 \ar[d] \\
X_1 \ar[r] \ar[d] & X_1 \times_Z Y \ar[d] \\
Y \ar[r]^-{\Delta} & Y \times_Z Y}
\end{equation}
Applying this in the case $X_1=X_2=V_1, Y = V_1 \times_X V_2$ and $Z = V_1 \times_X V_3$ with maps as in the top of \eqref{eq:local3}
\begin{equation}\label{eq:local3}
\xymatrix{
V_1^f \ar[r]^{i_f} \ar[d]^{i_0} & V_1 \ar[d]^{\Gamma_0} & \\
V_1 \ar[r]^-{\Gamma_f} & V_1 \times_X V_2 \ar[r]^{\id \times g} \ar[d] & V_1 \times_X V_3 \ar[d] \\
 & V_2 \ar[r]^g & V_3 }
\end{equation}
gives us the top two Cartesian squares in \eqref{eq:local1}. 
\begin{equation}\label{eq:local1}
\xymatrix{
V_1 \times_{V_1 \times_X V_2} V_1 \ar[rr] \ar[d]^{i_0} & & V_1 \times_{V_1 \times_X V_3} V_1 \ar[d]^{\id \times \Gamma_0} \\
V_1 \ar[rr]^-{\Gamma_{\Gamma_f}} \ar[d]^{\Gamma_f} & & V_1 \times_{V_1 \times_X V_3} (V_1 \times_X V_2) \ar[d]^{f \times \pi_2} \\
V_1 \times_X V_2 \ar[rr]^-{\Delta} \ar[d] & & (V_1 \times_X V_2) \times_{V_1 \times_X V_3} (V_1 \times_X V_2) \ar[d] \\
V_2 \ar[rr]^-{\Delta} & & V_2 \times_{V_3} V_2 }
\end{equation}
The bottom square in \eqref{eq:local1} is Cartesian because the bottom square in \eqref{eq:local3} is Cartesian. 

Identifying the top line in \eqref{eq:local1} with $V_1^f \to V_1^{g \circ f}$ and using that 
$$V_1 \times_{V_1 \times_X V_3} (V_1 \times_X V_2) \cong V_1 \times_{V_3} V_2$$
yields the Cartesian squares on the left of \eqref{eq:van-square2A}. One similarly shows that the squares on the right of \eqref{eq:van-square2A} are Cartesian. 

Next consider the following cube obtained by base changing the front square with respect to the map $f \times \id$. 
\begin{equation}\label{eq:Ncart}
        \begin{tikzpicture}[baseline=(current  bounding  box.center),thick]
                \newcommand*{\ha}{1.8}; \newcommand*{\hb}{1.8}; \newcommand*{\hc}{1.8};
                \newcommand*{\va}{-1.1}; \newcommand*{\vb}{-1.1}; \newcommand*{\vc}{-1.1};
                \node (ab) at (\ha,0) {$V_1^{g \circ f}$};
                \node (ad) at (\ha+\hb+\hc,0) {$V_1$};
                \node (ba) at (0,\va) {$V_2^g$};
                \node (bc) at (\ha+\hb,\va) {$V_2$};
                \node (cb) at (\ha,\va+\vb) {$V_1$};
                \node (cd) at (\ha+\hb+\hc,\va+\vb) {$V_1 \times_X V_3$};
                \node (da) at (0,\va+\vb+\vc) {$V_2$};
                \node (dc) at (\ha+\hb,\va+\vb+\vc) {$V_2 \times_X V_3$};
		
                \draw[->] (ab) to node[above] {$i_{g \circ f}$} (ad);
                \draw[->] (ba) to node[below] {\hspace{1.5cm} $i_g$} (bc);
                \draw[->] (ab) to node[above] {$\bar{f}$} (ba);
                \draw[->] (ab) to node[left] {} (cb);
                \draw[->] (ad) to node[below] {$f$} (bc);
                \draw[->] (ba) to node[above] {\hspace{-.6cm} $i_0$} (da);
                \draw[->] (cb) to node[above] {\hspace{1.5cm} $ $} (cd);
                \draw[->] (cb) to node[above] {$f$} (da);
                \draw[->] (cd) to node[below] {\hspace{1cm} $f \times \id$} (dc);
                \draw[->] (da) to node[above] {} (dc);

                \draw[-,line width=6pt,draw=white] (ba) to node[above] {\hspace{.5cm} $ $} (bc);
                \draw[->] (ba) to node[above,pos=.75] {$ $} (bc);
                \draw[-,line width=6pt,draw=white] (bc) to  (dc);
                \draw[->] (bc) to node[right,pos=.2] {$ $} (dc);
                \draw[->] (ad) to node[right] {$\Gamma_0$} (cd);
        \end{tikzpicture}
\end{equation}
The top and left side squares in \eqref{eq:Ncart} are the two Cartesian squares in \eqref{eq:van-square2B}. 
\end{proof}

\begin{Corollary}\label{cor:van1}
Consider a morphism of bundles $f: V_1 \to V_2$ over some base $X$ and suppose $s_1,s_2$ are endomorphisms of $V_1,V_2$ respectively that commute with $f$. Then we have the following Cartesian diagrams
\begin{equation}\label{eq:van-square3A}
\xymatrix{
V_1^{s_1} \ar[r]^{\bar{\id}} \ar[d]^{i_0} & V_1^{f \circ s_1} \ar[d]^{\Gamma_{s_1} \circ i_0} \\
V_1 \ar[d]^{s_1} \ar[r]^-{\Gamma_{s_1}} & V_1 \times_{V_2} V_1 \ar[d]^{s_1 \times \id} \\
V_1 \ar[r]^-{\Delta_f} & V_1 \times_{V_2} V_1 }
\hspace{2cm}
\xymatrix{
V_1^{s_1} \ar[r]^{\bar{\id}} \ar[d]^{i_0} & V_1^{f \circ s_1} \ar[d]^{\Gamma_0 \circ i_{f \circ s_1}} \\
V_1 \ar[d]^{s_1} \ar[r]^-{\Gamma_0} & V_1 \times_{V_2} V_1 \ar[d]^{s_1 \times \id} \\
V_1 \ar[r]^-{\Delta_f} & V_1 \times_{V_2} V_1 }
\end{equation}
\begin{equation}\label{eq:van-square3B}
\xymatrix{
V_1^{s_2 \circ f} \ar[r]^{\bar{f}} \ar[d]^{i_0} & V_2^{s_2} \ar[d]^{i_0} \\
V_1 \ar[r]^f & V_2 }
\hspace{2cm}
\xymatrix{
V_1^{s_2 \circ f} \ar[r]^{\bar{f}} \ar[d]^{i_{s_2 \circ f}} & V_2^{s_2} \ar[d]^{i_{s_2}} \\
V_1 \ar[r]^f & V_2 }
\end{equation}
\end{Corollary}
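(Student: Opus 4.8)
The plan is to derive Corollary \ref{cor:van1} from Lemma \ref{lem:van1} by specializing the three-term sequence of bundle maps to a situation where the maps in question come from $s_1$, $f$ and $s_2$. For the diagrams in \eqref{eq:van-square3A}, I would apply \eqref{eq:van-square2A} with the sequence $V_1 \xrightarrow{s_1} V_1 \xrightarrow{f} V_2$; that is, take the ``$V_1$'' of the lemma to be our $V_1$, the ``$V_2$'' to be our $V_1$ again, and the ``$V_3$'' to be our $V_2$, with ``$f$'' $= s_1$ and ``$g$'' $= f$. Then $g \circ f$ in the lemma becomes $f \circ s_1$, so $V_1^{g\circ f}$ becomes $V_1^{f \circ s_1}$, and $V_1^f$ becomes $V_1^{s_1}$. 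The fiber product $V_1 \times_{V_3} V_2$ in the lemma becomes $V_1 \times_{V_2} V_1$, and $\Delta_g$ becomes $\Delta_f$. The only point to check is that the map labelled $\Gamma_f \circ i_0$ (resp. $\Gamma_0 \circ i_{g\circ f}$) in \eqref{eq:van-square2A} matches $\Gamma_{s_1} \circ i_0$ (resp. $\Gamma_0 \circ i_{f \circ s_1}$) in \eqref{eq:van-square3A}, which is immediate once one tracks the substitution $f \leftrightarrow s_1$; similarly the left vertical maps $i_0$ and $i_{s_1}$ match. Hence both squares of \eqref{eq:van-square3A} are exactly the two squares of \eqref{eq:van-square2A} under this relabelling.

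For the diagrams in \eqref{eq:van-square3B}, I would instead apply \eqref{eq:van-square2B} to the sequence $V_1 \xrightarrow{f} V_2 \xrightarrow{s_2} V_2$; that is, take ``$V_1$'' $= V_1$, ``$V_2$'' $= V_2$, ``$V_3$'' $= V_2$, ``$f$'' $= f$, ``$g$'' $= s_2$. Then $g \circ f = s_2 \circ f$, so $V_1^{g \circ f} = V_1^{s_2 \circ f}$ and $V_2^g = V_2^{s_2}$, and the induced map $\bar f$ is precisely the one from Lemma \ref{lem:van0}. The two Cartesian squares in \eqref{eq:van-square2B} (with vertical maps $i_0$ and $i_g = i_{s_2}$ on the right, $i_0$ and $i_{g\circ f} = i_{s_2 \circ f}$ on the left) then literally become the two squares of \eqref{eq:van-square3B}. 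The one thing worth noting is that nowhere do we actually need $s_1$ or $s_2$ to be \emph{endomorphisms} for these particular diagrams — only that the relevant triangles of bundle maps over $X$ commute, which is automatic. The hypothesis that $s_1, s_2$ commute with $f$ (i.e. $f \circ s_1 = s_2 \circ f$) is what will be used elsewhere to identify $V_1^{f \circ s_1}$ with $V_1^{s_2 \circ f}$ and thereby glue these squares together, but for the statement as written each square is an instance of Lemma \ref{lem:van1} directly.

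I do not expect any real obstacle here: the proof is entirely a matter of substituting the right objects into Lemma \ref{lem:van1} and checking that the labels match, so the ``hard part'' is only the bookkeeping of which $V_i$ and which map plays which role in each of the two applications. The cleanest way to write it is simply: \emph{The squares in \eqref{eq:van-square3A} are the squares of \eqref{eq:van-square2A} applied to $V_1 \xrightarrow{s_1} V_1 \xrightarrow{f} V_2$, and the squares in \eqref{eq:van-square3B} are the squares of \eqref{eq:van-square2B} applied to $V_1 \xrightarrow{f} V_2 \xrightarrow{s_2} V_2$.} If a referee wanted more detail I would add a sentence verifying that the composite vertical maps agree under the relabelling, but that verification is mechanical.
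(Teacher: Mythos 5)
Your proposal matches the paper's proof exactly: apply \eqref{eq:van-square2A} to $V_1 \xrightarrow{s_1} V_1 \xrightarrow{f} V_2$ and \eqref{eq:van-square2B} to $V_1 \xrightarrow{f} V_2 \xrightarrow{s_2} V_2$. The bookkeeping you spell out is correct and is the entire content of the paper's two-sentence proof.
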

\begin{proof}
The squares in \eqref{eq:van-square3A} are obtained from the squares in \eqref{eq:van-square2A} applied to the composition $V_1 \xrightarrow{s_1} V_1 \xrightarrow{f} V_2$. Likewise, the squares in \eqref{eq:van-square3B} are obtained from the squares in \eqref{eq:van-square2B} applied to the composition $V_1 \xrightarrow{f} V_2 \xrightarrow{s_2} V_2$. 
\end{proof}

\begin{Lemma}\label{lem:van2}
Consider a morphism $s: V \to W$ of bundles over $X$ and its base change $h^*(s): h^*(V) \to h^*(W)$ with respect to some map $h: Y \to X$. Then the following squares are Cartesian
\begin{equation}\label{eq:van-square4}
\xymatrix{
h^*(V)^{h^*(s)} \ar[r]^{\bar{h}} \ar[d] & V^s \ar[d] \\
h^*(V) \ar[r]^h & V }
\end{equation}
\end{Lemma}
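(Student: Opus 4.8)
The plan is to unwind the definition of the derived kernel as the fiber product \eqref{eq:van-square} and show that base change along $h$ takes this fiber product diagram for $s$ to the corresponding diagram for $h^*(s)$. Concretely, $V^s$ sits in the Cartesian square whose corners are $V, V, V\times_X W$ with maps $\Gamma_s, \Gamma_0$. The key observation is that the graph maps are compatible with base change: $h^*(\Gamma_s) = \Gamma_{h^*(s)}$ as maps $h^*(V) \to h^*(V)\times_Y h^*(W) = h^*(V \times_X W)$, and similarly $h^*(\Gamma_0) = \Gamma_0$. So pulling back the defining square \eqref{eq:van-square} along $h$ gives exactly the defining square for $h^*(V)^{h^*(s)}$.

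First I would record the elementary fact that for a bundle map $s: V \to W$ over $X$ and any $h: Y \to X$, one has a canonical identification $h^*(V \times_X W) \cong h^*(V) \times_Y h^*(W)$, under which $h^*(\Gamma_s)$ is identified with $\Gamma_{h^*(s)}$ (and likewise for $s = 0$). This is immediate from the functoriality of the graph construction: $\Gamma_s = (\id_V, s): V \to V \times_X W$, and pullback commutes with products of sheaves/sections. Second, I would apply the pasting law for Cartesian squares: the square \eqref{eq:van-square4} is obtained by base-changing the square \eqref{eq:van-square} (for $s$) along $h$, and since base change of a Cartesian square is Cartesian, and since the base-changed square is canonically identified — via the first step — with the defining Cartesian square of $h^*(V)^{h^*(s)}$, we conclude that \eqref{eq:van-square4} is Cartesian and $\bar h$ is the induced map from Lemma \ref{lem:van0} (applied to the evident square relating $h^*(s)$ and $s$ over $h$).

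Concretely, the argument reduces to the following diagram chase: consider the cube whose front face is the defining square \eqref{eq:van-square} for $s$ over $X$ and whose back face is the defining square for $h^*(s)$ over $Y$, with the four connecting arrows being $h$ and its pullbacks to $V$, $V\times_X W$, and $V$. The back face is Cartesian by definition of $h^*(V)^{h^*(s)}$, the four side faces are Cartesian (each expresses a pullback of a bundle or product along $h$), and hence by the standard cube lemma the induced map from the back total fiber product to the pullback of the front fiber product is an isomorphism — that is, $h^*(V)^{h^*(s)} \xrightarrow{\bar h} V^s$ realizes $h^*(V)^{h^*(s)}$ as the fiber product $h^*(V) \times_V V^s$, which is precisely the content of \eqref{eq:van-square4}.

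I do not expect any serious obstacle here; the only mild point of care is the compatibility of derived fiber products with pullback in the derived/stacky setting, but this is standard (pullback functors preserve limits, and in particular preserve the homotopy fiber products used to define derived kernels). The verification that $h^*(\Gamma_s) = \Gamma_{h^*(s)}$ is a formal consequence of pullback being monoidal and preserving the relevant universal properties, so the whole proof is a short diagram chase once that identification is in place.
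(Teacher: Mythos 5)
Your proposal is correct and matches the paper's argument: both identify $h^*(V^s)$ with $h^*(V)^{h^*(s)}$ by noting that pullback along $h$ preserves the defining fiber-product square \eqref{eq:van-square} (since graphs commute with base change), and then recognize \eqref{eq:van-square4} as the standard base-change square for $V^s \to V$ along $h$. The cube formulation you use is a slightly more explicit packaging of the same two-step observation.
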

\begin{proof}
Applying the base change $h: Y \to X$ to \eqref{eq:van-square} we get 
$$\xymatrix{
h^*(V^s) \ar[r]^{h^*(i_s)} \ar[d]^{h^*(i_0)} & h^*(V) \ar[d]^{\Gamma_0} \\
h^*(V) \ar[r]^{\Gamma_{h^*(s)}} & V \times_X W}
$$
from which it follows that $h^*(V^s) \cong h^*(V)^{h^*(s)}$. The square in \eqref{eq:van-square4} is then the following standard Cartesian square.
$$\xymatrix{
h^*(V^s) \ar[r] \ar[d] & V^s \ar[d] \\
h^*(V) \ar[r] & V }$$
\end{proof}

\begin{Corollary}\label{cor:van2}
Consider the following commutative square
$$\xymatrix{
V_1 \ar[r]^f \ar[d] & V_2 \ar[d] \\
X_1 \ar[r]^g & X_2 }
$$
where $V_1$ and $V_2$ are vector bundles over $X_1$ and $X_2$ respectively and the map $f$ is induced by a morphism of bundles $V_1 \to g^*(V_2)$. Suppose further that $s_1,s_2$ are endomorphisms of $V_1,V_2$ respectively that commute with $f$. Then $f$ induces a morphism $\bar{f}: V_1^{s_1} \to V_2^{s_2}$ such that
\begin{enumerate}
\item $\bar{f}_*$ preserves coherence if $V_1 \to g^*(V_2)$ is injective and $g$ is proper, 
\item $\bar{f}^*$ preserves coherence if $V_1 \to g^*(V_2)$ is surjective and $g^*$ preserves coherence.
\end{enumerate}
\end{Corollary}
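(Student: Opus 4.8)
The plan is to factor $\bar f$ as a composite of maps each of which, by the Cartesian squares already established, is a base change of a morphism belonging to a class that is stable under base change and for which pushforward (resp. pullback) is known to preserve coherence. Set $V := g^*(V_2)$, a bundle over $X_1$, let $\psi \colon V_1 \to V$ be the given morphism of bundles over $X_1$ (so that $f$ is the composite $V_1 \xrightarrow{\psi} V \to V_2$, the second arrow being the base change of $g$), and let $s_2' := g^*(s_2)$, an endomorphism of $V$ which commutes with $\psi$. Since the square expressing that $f$ commutes with $s_1,s_2$ provides $\bar f$ via Lemma \ref{lem:van0}, the functoriality of the derived kernel construction yields a factorization
\begin{equation*}
\bar f \colon\ V_1^{s_1} \xrightarrow{\ \overline{\id}\ } V_1^{s_2'\circ\psi} \xrightarrow{\ \overline{\psi}\ } V^{s_2'} \xrightarrow{\ \overline{g}\ } V_2^{s_2},
\end{equation*}
where we have used $\psi\circ s_1 = s_2'\circ\psi$ to split off $\overline{\id}$.

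Next I would identify each of the three arrows as a base change. By Lemma \ref{lem:van2} (applied with $h=g$ and the endomorphism $s_2$ in the role of $s$), $V^{s_2'}\cong g^*(V_2^{s_2})$ and $\overline{g}$ is the base change of $g\colon X_1\to X_2$ along the affine finite type morphism $V_2^{s_2}\to X_2$; hence $\overline g$ is proper if $g$ is, and $\overline g$ has finite Tor-dimension (equivalently, coherent pullback) if $g$ does, both properties being stable under base change. By \eqref{eq:van-square3B} of Corollary \ref{cor:van1}, $\overline{\psi}\colon V_1^{s_2'\circ\psi}\to V^{s_2'}$ is the base change of $\psi\colon V_1\to V$. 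By \eqref{eq:van-square3A}, $\overline{\id}\colon V_1^{s_1}\to V_1^{s_2'\circ\psi}$ is the base change of the relative diagonal $\Delta_\psi\colon V_1\to V_1\times_V V_1$.

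It then remains to treat $\psi$ and $\Delta_\psi$ in the two cases. If $\psi$ is a subbundle inclusion then $\psi$ is a closed immersion and $\Delta_\psi$ is a closed immersion (a diagonal), so $\overline{\psi}$ and $\overline{\id}$ are closed immersions; composing with $\overline g_*$, which preserves coherence since $g$ is proper, shows $\bar f_*$ preserves coherence, which is (1). If $\psi$ is a fibrewise surjection then $\psi$ is smooth (an affine bundle, locally $\bA^{k+\ell}\to\bA^k$), so $\overline{\psi}$ is flat with coherent pullback; moreover $V_1\times_V V_1\cong V_1\times_{X_1}\ker(\psi)$, and under this identification $\Delta_\psi$ is the pullback to $V_1$ of the zero section of $\ker(\psi)$, whose base changes are again zero sections of vector bundles and thus have coherent pullback (Koszul resolution); composing with $\overline g^*$, which preserves coherence since $g$ does, shows $\bar f^*$ preserves coherence, which is (2).

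The step requiring the most care is that coherence-preservation of a pushforward (resp. pullback) cannot be transported across a Cartesian square by the base-change formula — that formula only controls the images of sheaves pulled back from the other factor. Instead one uses that each class invoked above (closed immersion, proper morphism, smooth morphism, morphism of finite Tor-dimension) is stable under arbitrary base change and that membership in it already forces the corresponding functor to preserve coherence. One should also not conflate the derived and classical fibre products: $V_1\times_V V_1$ is in general strictly larger than $V_1$ even when $\psi$ is injective (its structure sheaf is $\Lambda^\bullet$ of the conormal bundle), but since we only need $\Delta_\psi$ to be a closed immersion this is harmless, while when $\psi$ is smooth the fibre product $V_1\times_V V_1$ is underived and the description of $\Delta_\psi$ above is literal.
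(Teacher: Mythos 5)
Your proof is correct and follows essentially the same route as the paper's: factor $f$ through $g^*(V_2)$, split $\bar f$ into the three pieces controlled by \eqref{eq:van-square3A}, \eqref{eq:van-square3B} and \eqref{eq:van-square4}, and argue via stability of the relevant class of morphisms (closed immersion, proper, smooth/flat, finite Tor-dimension) under base change. Your closing caveat — that one must base-change the \emph{class} of the morphism rather than transport ``pushforward/pullback preserves coherence'' across a Cartesian square via the base-change formula — makes explicit a point the paper's phrasing leaves implicit, but the underlying argument is the same.
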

\begin{proof}
We factor $f$ as $V_1 \xrightarrow{f_1} g^*(V_2) \xrightarrow{f_2} V_2$ which subsequently induces $\bar{f}$ as the following composition 
$$V_1^{s_1} \xrightarrow{\bar{\id}} V_1^{f_1 \circ s_1} = V_1^{g^*(s_2) \circ f_1} \xrightarrow{\bar{f}_1} g^*(V_2)^{g^*(s_2)} \xrightarrow{\bar{f}_2} V_2^{s_2}.$$
Using \eqref{eq:van-square3A}, the fact $\Delta_{f_1}$ is proper and base change we get that $\bar{\id}$ is proper. Thus $\bar{\id}_*$ preserves coherence. Similarly, if $f_1$ is injective then it is proper and by \eqref{eq:van-square3B} and base change we find that $\bar{f_1}$ is also proper. Thus $\bar{f}_{1*}$ also preserve coherence. Moreover, if $g$ is proper then by \eqref{eq:van-square4} and base change we find that $\bar{f_2}$ is proper and thus $\bar{f}_{2*}$ also preserves coherence. It follows that $\bar{f}_*$ preserves coherence which proves (1). 

On the other hand, if $f_1$ is surjective then $\Delta_{f_1}^*$ preserves coherence and using \eqref{eq:van-square3A} and base change $\bar{\id}^*$ preserves coherence. Similarly, since $f_1^*$ preserves coherence, \eqref{eq:van-square3B} and base change implies that $\bar{f}_1^*$ also preserves coherence. Lastly, if $g^*$ preserves coherence then using (\ref{eq:van-square4} and base change we get that $\bar{f}_2^*$ preserves coherence. It follows that $\bar{f}^*$ preserves coherence which proves (2). 
\end{proof}

\begin{Proposition}\label{prop:van}
Consider the fiber product of bundles over some base $X$ shown on the left in \eqref{eq:cartesian} and suppose that $s_i \in \End(V_i)$ are endomorphisms which commute with these maps. Then the induced square on the right of \eqref{eq:cartesian} is also Cartesian. 
\begin{equation}\label{eq:cartesian}
\xymatrix{
V_1 \ar[r]^f \ar[d]^{g} & V_2 \ar[d]^{g'} \\
V_3 \ar[r]^{f'} & V_4 }
\hspace{2cm}
\xymatrix{
V_1^{s_1} \ar[r]^{\bar{f}} \ar[d]^{\bar{g}} & V_2^{s_2} \ar[d]^{\bar{g}'} \\
V_3^{s_3} \ar[r]^{\bar{f}'} & V_4^{s_4} }
\end{equation}
\end{Proposition}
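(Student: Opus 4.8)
The plan is to reduce the Cartesian-ness of the right square in \eqref{eq:cartesian} to the already-established base change results for derived kernels, namely Lemma \ref{lem:van2} together with Corollary \ref{cor:van1}. The key observation is that the fiber product $V_1 = V_2 \times_{V_4} V_3$ can be built up in two stages, and each stage corresponds to one of the Cartesian squares proved earlier in the appendix. First I would note that since $g': V_2 \to V_4$ is a morphism of bundles over $X$ compatible with $s_2$ and $s_4$, the base change of $g'$ along $f': V_3 \to V_4$ produces the bundle $V_1 = f'^*(V_2)$ over $V_3$, and by Lemma \ref{lem:van2} (diagram \eqref{eq:van-square4}) the square with corners $V_1^{s_1}, V_2^{s_2}, V_1, V_2$ is Cartesian; here $s_1$ is the base change $f'^*(s_2)$, which one checks agrees with the given endomorphism since both are determined by the commutation constraints. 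This handles the "horizontal" factorization.

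Next I would address the "vertical" map $\bar g: V_1^{s_1} \to V_3^{s_3}$. Writing $V_1 = f'^*(V_2)$ as a bundle over $V_3$, the map $g$ is exactly the structure map of this bundle, i.e. the composition $V_1 \to V_3 \xrightarrow{\sigma_{s_3}} \cdots$; more precisely $g: V_1 \to V_3$ together with the endomorphisms $s_1, s_3$ falls into the framework of Corollary \ref{cor:van1}, so the relevant squares relating $V_1^{s_1}$, $V_3^{s_3}$, $V_1$ and $V_3$ are Cartesian. Assembling: one has a commutative diagram of four squares, three of which are known to be Cartesian (the base-change square from Lemma \ref{lem:van2}, plus the two squares of Corollary \ref{cor:van1} / \eqref{eq:van-square3B}), and a pasting-of-Cartesian-squares argument then forces the right square of \eqref{eq:cartesian} to be Cartesian. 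Concretely, I would write $V_1^{s_1}$ as the iterated fiber product $V_3^{s_3} \times_{V_3} V_1 \times_{V_2} V_2^{s_2}$ using these identifications and simplify, using that $V_1 = V_2 \times_{V_4} V_3$ as plain bundles.

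An alternative, and perhaps cleaner, route is purely formal: the derived kernel $V_i^{s_i}$ is defined by the fiber square \eqref{eq:van-square}, so $V_i^{s_i} = V_i \times_{V_i \times_X W_i} V_i$ along $\Gamma_{s_i}$ and $\Gamma_0$. Since all of $f, g, f', g'$ commute with the $s_i$ and with the zero sections, taking the fiber product $V_1 = V_2 \times_{V_4} V_3$ commutes with forming these defining fiber squares — fiber products commute with fiber products. Thus $V_1^{s_1} \cong V_2^{s_2} \times_{V_4^{s_4}} V_3^{s_3}$ by a diagram chase in which one simply rearranges a limit over a finite diagram; the only thing to verify is that the endomorphism induced on the fiber product $V_1$ is the given $s_1$, which is immediate from the compatibility hypotheses. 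I expect the main obstacle to be purely bookkeeping: keeping track of which maps are the graphs $\Gamma_{s_i}$ versus $\Gamma_0$ and verifying that the various identifications of base-changed bundles (e.g. $V_1 \cong f'^*(V_2)$, and the induced $s_1 = f'^*(s_2)$) are the correct ones, rather than any substantive geometric input. I would therefore lead with the formal "limits commute with limits" argument and, if a referee wants it spelled out, fall back on the explicit pasting of the squares from \eqref{eq:van-square3B} and \eqref{eq:van-square4}.
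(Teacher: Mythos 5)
Your ``limits commute with limits'' argument (the route you say you would lead with) is correct and is a genuinely different, cleaner path than the paper's. The paper instead factors $\bar f$ through the intermediate derived kernel $V_1^{f\circ s_1}$ and $\bar f'$ through $V_3^{f'\circ s_3}$, splitting the right square of \eqref{eq:cartesian} into two squares, and then verifies each is Cartesian via an explicit cube whose front and back faces come from \eqref{eq:van-square3A} and \eqref{eq:van-square3B}. Your formal argument sidesteps this: view the assertion as an instance of Fubini for homotopy limits applied to the $3\times 3$ diagram whose rows are the cospans $V_i \xrightarrow{\Gamma_{s_i}} V_i\times_X V_i \xleftarrow{\Gamma_0} V_i$ for $i=2,4,3$ and whose columns carry the vertical maps $g', f'$ (and their self-products). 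The row-limits give $V_2^{s_2}\to V_4^{s_4}\leftarrow V_3^{s_3}$; the column-limits give $V_1\to V_1\times_X V_1\leftarrow V_1$ with the outer maps $\Gamma_{s_1}$ and $\Gamma_0$, because the hypothesis that the $s_i$ commute with $f,g,f',g'$ identifies the induced endomorphism with the given $s_1$. Commuting the two limits yields $V_1^{s_1}\cong V_2^{s_2}\times_{V_4^{s_4}}V_3^{s_3}$, which is exactly Cartesianness of the right square. What your route buys is a one-step conceptual proof; what the paper's buys is compatibility with the rest of the appendix, which is written entirely in the language of pasting Cartesian squares (and the two cubes make the identification of the comparison map completely explicit, which the Fubini argument leaves to the reader).

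A caution about your first route: Lemma \ref{lem:van2} is a base-change statement for a map $h\colon Y\to X$ of the \emph{underlying base}, pulling a bundle over $X$ back to a bundle over $Y$. In the present proposition all four $V_i$ are bundles over the \emph{same} $X$, and $g'\colon V_2\to V_4$ is a morphism of bundles over $X$, not a structure map exhibiting $V_2$ as a bundle over $V_4$ in the sense relevant to the derived kernel (which is formed relative to $X$). So the identification ``$V_1 = f'^*(V_2)$ over $V_3$'' does not place you in the setting of \eqref{eq:van-square4}, and that route as phrased has a gap. The correct in-the-spirit-of-the-appendix decomposition is the one the paper uses, factoring through $V_1^{f\circ s_1}$ and invoking Corollary \ref{cor:van1}. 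Since you explicitly say you would lead with the formal argument and fall back on the pasting, your overall plan is sound, but the fallback as described would need to be replaced by the paper's factorization rather than the Lemma \ref{lem:van2} route.
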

\begin{proof}
Using Lemma \ref{lem:van0} we can factor the right square in \eqref{eq:cartesian} as 
\begin{equation}\label{eq:local2}
\xymatrix{
V_1^{s_1} \ar[r] \ar[d]^{\bar{g}} & V_1^{f \circ s_1} \ar[r] \ar[d]^{\bar{g}} & V_2^{s_2} \ar[d]^{\bar{g}'} \\
V_3^{s_3} \ar[r] & V_3^{f \circ s_3} \ar[r] & V_4^{s_4} }
\end{equation}
Now consider the following commutative cube obtained by base changing the front face. 
\begin{equation}\label{eq:Ncart2}
        \begin{tikzpicture}[baseline=(current  bounding  box.center),thick]
                \newcommand*{\ha}{1.8}; \newcommand*{\hb}{1.8}; \newcommand*{\hc}{1.8};
                \newcommand*{\va}{-1.1}; \newcommand*{\vb}{-1.1}; \newcommand*{\vc}{-1.1};
                \node (ab) at (\ha,0) {$V_1^{s_1}$};
                \node (ad) at (\ha+\hb+\hc,0) {$V_1^{f \circ s_1}$};
                \node (ba) at (0,\va) {$V_3^{s_3}$};
                \node (bc) at (\ha+\hb,\va) {$V_3^{f' \circ s_3}$};
                \node (cb) at (\ha,\va+\vb) {$V_1$};
                \node (cd) at (\ha+\hb+\hc,\va+\vb) {$V_1 \times_{V_2} V_1$};
                \node (da) at (0,\va+\vb+\vc) {$V_3$};
                \node (dc) at (\ha+\hb,\va+\vb+\vc) {$V_3 \times_{V_4} V_3$};

                \draw[->] (ab) to node[above] {$ $} (ad);
                \draw[->] (ba) to node[below] {\hspace{1.5cm} $ $} (bc);
                \draw[->] (ab) to node[above] {$\bar{g}$} (ba);
                \draw[->] (ab) to node[left] {} (cb);
                \draw[->] (ad) to node[below] {$\bar{g}$} (bc);
                \draw[->] (ba) to node[above] {\hspace{-.6cm} $ $} (da);
                \draw[->] (cb) to node[above] {\hspace{1.5cm} $\Delta$} (cd);
                \draw[->] (cb) to node[above] {$g$} (da);
                \draw[->] (cd) to node[below] {\hspace{1cm} $g \times g$} (dc);
                \draw[->] (da) to node[above] {$\Delta$} (dc);

                \draw[-,line width=6pt,draw=white] (ba) to node[above] {\hspace{.5cm} $ $} (bc);
                \draw[->] (ba) to node[above,pos=.75] {$ $} (bc);
                \draw[-,line width=6pt,draw=white] (bc) to  (dc);
                \draw[->] (bc) to node[right,pos=.2] {$ $} (dc);
                \draw[->] (ad) to node[right] {$ $} (cd);
        \end{tikzpicture}
\end{equation}
The front and back faces are Cartesian by the squares in \eqref{eq:van-square3A} and it is a standard exercise to check that the bottom face is Cartesian. It follows that the top square is also Cartesian and thus the left square in \eqref{eq:local2} is Cartesian. 

Similarly, we can consider the following commutative group obtained by base changing the front face.
\begin{equation}\label{eq:Ncart3}
        \begin{tikzpicture}[baseline=(current  bounding  box.center),thick]
                \newcommand*{\ha}{1.8}; \newcommand*{\hb}{1.8}; \newcommand*{\hc}{1.8};
                \newcommand*{\va}{-1.1}; \newcommand*{\vb}{-1.1}; \newcommand*{\vc}{-1.1};
                \node (ab) at (\ha,0) {$V_1^{s_1}$};
                \node (ad) at (\ha+\hb+\hc,0) {$V_2^{s_2}$};
                \node (ba) at (0,\va) {$V_3^{f' \circ s_3}$};
                \node (bc) at (\ha+\hb,\va) {$V_4^{s_4}$};
                \node (cb) at (\ha,\va+\vb) {$V_1$};
                \node (cd) at (\ha+\hb+\hc,\va+\vb) {$V_2$};
                \node (da) at (0,\va+\vb+\vc) {$V_3$};
                \node (dc) at (\ha+\hb,\va+\vb+\vc) {$V_4$};

                \draw[->] (ab) to node[above] {$\bar{f}$} (ad);
                \draw[->] (ba) to node[below] {\hspace{1.5cm} $\bar{f}'$} (bc);
                \draw[->] (ab) to node[above] {$\bar{g}$} (ba);
                \draw[->] (ab) to node[left] {} (cb);
                \draw[->] (ad) to node[below] {$\bar{g}'$} (bc);
                \draw[->] (ba) to node[above] {\hspace{-.6cm} $ $} (da);
                \draw[->] (cb) to node[above] {\hspace{1.5cm} $f$} (cd);
                \draw[->] (cb) to node[above] {$g$} (da);
                \draw[->] (cd) to node[below] {$g'$} (dc);
                \draw[->] (da) to node[above] {$f'$} (dc);

                \draw[-,line width=6pt,draw=white] (ba) to node[above] {\hspace{.5cm} $ $} (bc);
                \draw[->] (ba) to node[above,pos=.75] {$ $} (bc);
                \draw[-,line width=6pt,draw=white] (bc) to  (dc);
                \draw[->] (bc) to node[right,pos=.2] {$ $} (dc);
                \draw[->] (ad) to node[right] {$ $} (cd);
        \end{tikzpicture}
\end{equation}
The front and back faces are Cartesian by the squares in \eqref{eq:van-square3B}. It follows that the top square is also Cartesian and thus the right square in \eqref{eq:local2} is Cartesian. So the total square in \eqref{eq:local2}, namely the right square in \eqref{eq:cartesian}, must be Cartesian. 
\end{proof}

We end with the following Lemma which is used in Section \ref{sec:r-matrices} to construct renormalized $\r$-matrices. 

\begin{Lemma}\label{lem:cancel}
Consider the following commutative diagram of bundles over some base $X$ where the horizontal lines are exact
\begin{equation}
\xymatrix{
0 \ar[r] & V_1 \ar[r]  \ar[d]^{s_1} & V_2 \ar[d]^{s_2} \ar[r]^f & V_3 \ar[r] \ar[d]^{s_3} & 0 \\
0 \ar[r] & W_1 \ar[r] & W_2 \ar[r] & W_3 \ar[r] & 0 
}
\end{equation}
If $s_1$ is an isomorphism then the induced map $\bar{f}: V_2^{s_2} \to V_3^{s_3}$ is an isomorphism. 
\end{Lemma}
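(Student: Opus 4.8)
The plan is to realize both derived kernels as iterated derived vanishing loci and to exploit the affine-bundle structure of $f$. Recall from Section~\ref{sec:vanloci} that $V^{s}$ is the derived vanishing locus of $s$, viewed as a section of $\mathrm{pr}^{*}W$ over the total space $V$. Write $q\colon W_2\twoheadrightarrow W_3$ for the quotient map of the bottom row and $\iota_{V}\colon V_1\hookrightarrow V_2$, $\iota_{W}\colon W_1\hookrightarrow W_2$ for the inclusions of the top row, so that the hypotheses read $q\circ s_2=s_3\circ f$ and $s_2\circ\iota_{V}=\iota_{W}\circ s_1$.

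The first step is to apply the left Cartesian square of \eqref{eq:van-square2B} twice. Applied to the composition $V_2\xrightarrow{f}V_3\xrightarrow{s_3}W_3$ it identifies $V_2^{s_3\circ f}$ with $V_2\times_{V_3}V_3^{s_3}$, hence exhibits the projection $\rho\colon V_2^{s_3\circ f}\to V_3^{s_3}$ as the base change of $f$; since the surjection of bundles $V_2\twoheadrightarrow V_3$ makes $f$ a torsor under the pullback of $V_1$ (it is Zariski-locally on $X$ a trivial vector bundle over $V_3$), the map $\rho$ is again a torsor under the pullback of $V_1$. Applied instead to $V_2\xrightarrow{s_2}W_2\xrightarrow{q}W_3$, and using that $q$ is surjective so that $W_2^{q}=(W_2^{q})^{\cl}=\ker q=W_1$ (Section~\ref{sec:vanloci}), the same square identifies $V_2^{q\circ s_2}=V_2^{s_3\circ f}$ with $V_2\times_{W_2}W_1$; concretely, $s_2$ restricts to a map $\tilde s_2\colon V_2^{s_3\circ f}\to W_1$. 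Factoring the zero section $X\hookrightarrow W_2$ through $W_1$ then gives $V_2^{s_2}\cong V_2^{s_3\circ f}\times_{W_1}X$ (fiber product along $\tilde s_2$ and the zero section of $W_1$), and the resulting map $V_2^{s_2}\to V_2^{s_3\circ f}$ is the canonical map $\bar{\id}$ of Lemma~\ref{lem:van0}, so that $\bar f=\rho\circ\bar{\id}$.

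The heart of the argument is to show that $\tilde s_2$, regarded as a morphism over $V_3^{s_3}$ into $\mathrm{pr}^{*}W_1$, trivializes the torsor $\rho$. Indeed, $s_2$ is additive and $s_2\circ\iota_{V}=\iota_{W}\circ s_1$, so $s_2(\iota_{V}(v_1)+v)=\iota_{W}(s_1(v_1))+s_2(v)$; restricting this identity to $V_2^{s_3\circ f}$ shows that $\tilde s_2$ intertwines the action of $\mathrm{pr}^{*}V_1$ on $V_2^{s_3\circ f}$ with that of $\mathrm{pr}^{*}W_1$ on the trivial torsor, along the isomorphism $s_1\colon\mathrm{pr}^{*}V_1\xrightarrow{\sim}\mathrm{pr}^{*}W_1$. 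A morphism of torsors along a group isomorphism is an isomorphism, so $\tilde s_2\colon V_2^{s_3\circ f}\xrightarrow{\sim}\mathrm{pr}^{*}W_1$ over $V_3^{s_3}$. Rewriting the fiber product of the previous step over $V_3^{s_3}$, $V_2^{s_2}\cong V_2^{s_3\circ f}\times_{\mathrm{pr}^{*}W_1}V_3^{s_3}$ (along $\tilde s_2$ and the zero section of $\mathrm{pr}^{*}W_1\to V_3^{s_3}$), and since $\tilde s_2$ is an isomorphism this fiber product is simply $V_3^{s_3}$; chasing the maps through, the composite isomorphism $V_2^{s_2}\xrightarrow{\sim}V_3^{s_3}$ is $\rho\circ\bar{\id}=\bar f$.

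I expect the third step — verifying that $\tilde s_2$ is a torsor trivialization — to be the only real point, since this is where the invertibility of $s_1$ is used; the underlying identity is pure linear algebra, but one must check it is compatible with the (flat) base changes defining $V_2^{s_3\circ f}\to V_3^{s_3}$ and invoke that torsor maps along group isomorphisms are invertible, everything else being bookkeeping with the Cartesian squares of the Appendix. Alternatively one may run the same computation after an fppf cover of $X$ over which both short exact sequences split: writing $s_2$ in block-triangular form with diagonal blocks $s_1,s_3$ and off-diagonal block $\gamma\colon V_3\to W_1$, one changes coordinates on $V_2$ by the isomorphism $(v_1,v_3)\mapsto(s_1v_1+\gamma v_3,v_3)$, so that $V_2^{s_2}$ becomes the derived vanishing locus of $(\mathrm{pr}_{W_1},\,s_3\circ\mathrm{pr}_{V_3})$ on $W_1\oplus V_3$, and computes this directly (using that $\mathrm{pr}_{W_1}$ is surjective and hence its vanishing locus is classical, together with \eqref{eq:van-square2B}) to be $V_3^{s_3}$. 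The one subtlety in this variant is the legitimacy of the reduction, i.e.\ that being an isomorphism is local on $X$ for the fppf topology.
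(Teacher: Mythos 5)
Your main argument is correct and takes a genuinely different route from the paper's. The paper localizes on $X$, splits both horizontal short exact sequences, writes $s_2$ as a block sum $(s_1,s_3)\colon V_1\times_X V_3\to W_1\times_X W_3$, and concludes $V_2^{s_2}\cong V_1^{s_1}\times_X V_3^{s_3}\cong V_3^{s_3}$ since $V_1^{s_1}\cong X$. Note that an arbitrary pair of splittings only makes $s_2$ block \emph{triangular}, with an off-diagonal entry $\gamma\colon V_3\to W_1$; making it block diagonal already uses the invertibility of $s_1$ via a shear — this is exactly what your second (``fppf cover'') variant makes explicit with the change of coordinates $(v_1,v_3)\mapsto(s_1v_1+\gamma v_3,v_3)$, so that variant is essentially the paper's argument spelled out more carefully. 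Your main argument, by contrast, stays global: it realizes $V_2^{s_3\circ f}=V_2^{q\circ s_2}$ simultaneously as the $\mathrm{pr}^*V_1$-torsor $V_2\times_{V_3}V_3^{s_3}$ over $V_3^{s_3}$ and as $V_2\times_{W_2}W_1$, observes that the second projection $\tilde s_2$ is equivariant along $s_1\colon V_1\to W_1$, and concludes it is a torsor trivialization when $s_1$ is invertible; the isomorphism $\bar f\colon V_2^{s_2}\xrightarrow{\sim}V_3^{s_3}$ drops out without ever choosing a splitting. The equivariance survives at the derived level because the $V_1$-translation and $\tilde s_2$ are both morphisms of fiber products of classical schemes, not pointwise formulas, so this is sound. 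The one step you lean on without proof is that a morphism of torsors over a derived base, covering a group-scheme isomorphism, is itself an isomorphism; this reduces, after fppf-local trivialization of both torsors, to the isomorphism $s_1$ composed with a shear, so it is fine, but it would be worth a sentence. Both routes buy something: the paper's is shorter, while yours identifies the structural reason $s_1$ invertible suffices (a torsor trivialization) and avoids the implicit normalization of the splitting.
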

\begin{proof}
The map $\bar{f}$ exists by Lemma \ref{lem:van0}. Being an isomorphism is a local property over $X$ so we can assume that both horizontal short exact sequences split. In this case we can rewrite the map $s_2$ as $(s_1,s_3): V_1 \times_X V_3 \to W_1 \times_X W_3$. In particular, 
$$V_2^{s_2} \cong V_1^{s_1} \times_X V_3^{s_3}.$$
The result follows since $V_1^{s_1} \cong X$ because $s_1$ is an isomorphism. 
\end{proof}

\end{document}